\documentclass{amsart}



\usepackage{amsmath}
\usepackage{amsfonts}
\usepackage{amssymb}
\usepackage{amscd}
\usepackage{amsthm}
\usepackage[mathscr]{eucal}
\theoremstyle{plain}
\theoremstyle{definition}
\newtheorem{theorem}{Theorem}[section]
\newtheorem{lemma}[theorem]{Lemma}
\newtheorem{proposition}[theorem]{Proposition}
\newtheorem{corollary}[theorem]{Corollary}
\newtheorem{definition}[theorem]{Definition}
\newtheorem{remark}[theorem]{Remark}

\newtheorem{example}[theorem]{Example}
\newtheorem{notation}[theorem]{Notation}
\newtheorem*{claim}{Claim}

\newcommand{\mc}[1]{{\mathcal #1}}

\newcommand{\seq}[2]{\ensuremath{({#1}_{#2})_{#2\in\mathbb{N}}}}
 \DeclareMathOperator{\supp}{supp}

\DeclareMathOperator{\conv}{conv}

\DeclareMathOperator{\spann}{span}
 \DeclareMathOperator{\ran}{ran}
\DeclareMathOperator{\Ker}{Ker} 

\DeclareMathOperator{\dist}{dist} 
\DeclareMathOperator{\sgn}{sgn} \DeclareMathOperator{\Ext}{Ext}
\DeclareMathOperator{\diag}{diag}
\newcommand{\R}{\mathbb{R}}
\newcommand{\N}{\mathbb{N}}
\newcommand{\Q}{\mathbb{Q}}

\newcommand{\e}{\varepsilon}
\newcommand{\co}{\ensuremath{c_{00}(\N)}}
\newcommand{\eqs}{{\mathfrak X}}

\theoremstyle{plain}

\begin{document}

\title{ Hereditarily Indecomposable Banach algebras of diagonal operators}

\author{Spiros A. Argyros}
\address[S.A. Argyros]{Department of Mathematics,
National Technical University of Athens}
\email{sargyros@math.ntua.gr}

\author{Irene  Deliyanni}
\address[I. Deliyanni]{18 Neapoleos St., Ag. Paraskevi, Athens}
 \email{ideliyanni@yahoo.gr}

\author{Andreas G. Tolias}
\address[A. Tolias]{Department of Mathematics,
 University of the Aegean}
\email{atolias@math.aegean.gr}

 \begin{abstract}
We provide a characterization of the Banach spaces $X$ with a
Schauder basis \seq{e}{n} which have the property that the dual
space $X^*$ is naturally isomorphic to the space
$\mc{L}_{\diag}(X)$ of diagonal operators with respect to
\seq{e}{n}. We also construct a Hereditarily Indecomposable Banach
space $\eqs_D$ with a Schauder basis \seq{e}{n} such that
$\eqs_D^*$ is isometric to  $\mc{L}_{\diag}(\eqs_D)$  with these
Banach algebras being Hereditarily Indecomposable. Finally, we
show that every $T\in \mc{L}_{\diag}(\eqs_D)$ is of the form
$T=\lambda I+K$, where $K$ is a compact operator.
 \end{abstract}


\footnotetext{ Research  partially supported by $\Pi EBE$ 2007
  NTUA Research Program.}

\keywords{ Hereditarily Indecomposable Banach space, Banach
algebra, diagonal operator,  compact operator}
\subjclass[2000]{46B28, 47L10, 46B20, 46B03}

\maketitle

\tableofcontents

\section{Introduction}  The starting point of this paper is a result connecting the dual space $X^*$ of a
space $X$ with a Schauder basis  \seq{e}{n}, with the space
$\mc{L}_{\diag}(X)$ of the diagonal operators  with respect to
this basis. We recall that $\mc{L}_{\diag}(X)$ is the commutative
subalgebra of $\mc{L}(X)$ containing all bounded linear operators
$T$ satisfying $Te_n=\lambda_n e_n$, $n\in\N$, for a sequence
$(\lambda_n)_{n\in\N}$ of scalars. As is well known, if the basis
\seq{e}{n} is unconditional, the algebra $\mc{L}_{\diag}(X)$ is
homeomorphic to the algebra $\ell_{\infty}(\N)$.
 Our result asserts that, under
some natural assumptions, the spaces  $X^*$ and
$\mc{L}_{\diag}(X)$ are isomorphic. There are classical spaces,
such as the space $c(\N)$ of all convergent sequences with the
summing basis, that satisfy these conditions and thus the
structure of the space of the diagonal operators acting on them is
completely described. In particular, for the space $X=c(\N)$ with
the summing basis, we obtain that $\mc{L}_{\diag}(X)$ is isometric
to $\ell_1(\N)$. Our first main result is the following.

 \begin{theorem}\label{thA}
  Let $X$ be a Banach space with a Schauder basis \seq{e}{n}. The
  following are equivalent.
\begin{enumerate}
  \item[(1)] The map $e_n^* \mapsto e_n^* \otimes e_n$ can be extended
  to an isomorphism between $X^*$ and $\mc{L}_{\diag}(X)$.
  \item[(2)] \begin{enumerate}
  \item[(a)] The basis \seq{e}{n} dominates the summing
  basis.
  \item[(b)]   The norm in $X^*$ is submultiplicative (i.e. there exists $C>0$
  such that $\|\sum\limits_{i=1}^na_i\beta_ie_i^*\|\le
  C\cdot \|\sum\limits_{i=1}^na_ie_i^*\|\cdot
  \|\sum\limits_{i=1}^n\beta_ie_i^*\|$.)
\end{enumerate}
\end{enumerate}
 \end{theorem}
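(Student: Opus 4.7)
For $(1)\Rightarrow(2)$, denote by $\Phi\colon X^*\to\mc{L}_{\diag}(X)$ the given isomorphism. Observe that $\Phi(\sum_{i=1}^n e_i^*)$ is the $n$-th basis projection $P_n$, which is uniformly bounded by the basis constant $K$. Transferring this through $\Phi^{-1}$ shows that the summing functionals $\sum_{i=1}^n e_i^*$ are uniformly bounded in $X^*$, i.e.\ $(e_n)_n$ dominates the summing basis, giving (a). For (b), note that the product of two diagonal operators is diagonal with coordinate-wise product of the diagonals; hence $\Phi(x^*)\Phi(y^*)=\Phi(x^*\cdot y^*)$ for finitely supported $x^*,y^*\in X^*$, and submultiplicativity of the operator norm on $\mc{L}(X)$ passes through $\Phi^{-1}$ to yield (b).

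For $(2)\Rightarrow(1)$ I first construct the inverse map $\Psi$. By (a), the summing functionals $s_N^*:=\sum_{i=1}^N e_i^*$ satisfy $\|s_N^*\|\le M$ uniformly. Since $s_N^*-s_M^*$ annihilates $\spann\{e_1,\ldots,e_M\}$ for $N>M$, one has $|(s_N^*-s_M^*)(y)|\le 2M\|y-P_My\|\to 0$, so $s(y):=\lim_N s_N^*(y)$ defines an $s\in X^*$ with $\|s\|\le M$. For $T\in\mc{L}_{\diag}(X)$ with $Te_n=\beta_n e_n$, set $\Psi(T):=s\circ T$; then $\Psi(T)(e_n)=\beta_n s(e_n)=\beta_n$ and $\|\Psi(T)\|\le M\|T\|$.

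Next I build $\Phi$. For $x^*\in X^*$, define $\Phi(x^*)$ on the span of the basis by $\Phi(x^*)(\sum a_n e_n):=\sum x^*(e_n)a_n e_n$ (finite sums). For any $y^*\in X^*$ with $\|y^*\|\le 1$ and $x=\sum_{n=1}^N a_n e_n$,
$$\Bigl\langle y^*,\Phi(x^*)x\Bigr\rangle=\langle z_N,x\rangle,\qquad z_N:=\sum_{n=1}^N x^*(e_n)y^*(e_n)e_n^*.$$
Since $P_N^* x^*=\sum_{n=1}^N x^*(e_n)e_n^*$ has norm at most $K\|x^*\|$ (and similarly for $y^*$), the finite-support hypothesis (b) gives $\|z_N\|\le CK^2\|x^*\|\|y^*\|$. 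Taking suprema yields $\|\Phi(x^*)x\|\le CK^2\|x^*\|\|x\|$, and extending $\Phi(x^*)$ by continuity to all of $X$ produces a bounded diagonal operator with $\|\Phi(x^*)\|\le CK^2\|x^*\|$. The compositions $\Psi\circ\Phi$ and $\Phi\circ\Psi$ agree with the identity on each $e_n$ (and on diagonal operators determined by their basis action), so by continuity $\Phi$ and $\Psi$ are mutually inverse isomorphisms; clearly $\Phi(e_n^*)=e_n^*\otimes e_n$.

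The main obstacle is the last construction: promoting the finite-support submultiplicativity (b) to a bilinear estimate involving arbitrary $x^*,y^*\in X^*$. This is possible precisely because the adjoint basis projections $P_N^*$ are uniformly bounded by $K$ regardless of whether the basis is shrinking, so the truncations $P_N^* x^*$ serve as legitimate finite proxies for $x^*$ in the estimate, even in cases where $P_N^* x^*$ does not converge in norm to $x^*$.
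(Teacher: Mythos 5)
Your argument is correct, and it rests on the same two pillars as the paper's proof of the precise version (Theorem \ref{th1}): condition (a) produces the inverse, condition (b) the boundedness of $\Phi$, and in both proofs (b) is applied to truncations of arbitrary functionals by the adjoint basis projections (in the paper these truncations appear as $\sum_i\lambda_i\nu_ie_i^*$ built from a norming functional $z^*$). The packaging, however, is genuinely different. The paper's key lemma is the Claim that $\sum_n\lambda_ne_n^*$ is $w^*$-convergent iff $\sum_n\lambda_n\mathbf{\overline{e}}_n$ is SOT-convergent; well-definedness, surjectivity and the two norm estimates are then handled separately, with explicit constants $C_1,C_2$ for a normalized monotone basis, and the statement is supplemented by the $1$-norming-set criterion (3), which is the form actually applied later to $\eqs_D$, $J_Z$ and $JT_I$. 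You avoid any series-convergence discussion: $\Phi(x^*)$ is defined on the dense span and extended by continuity after the estimate $\|\Phi(x^*)x\|\le CK^2\|x^*\|\,\|x\|$, and surjectivity together with the lower bound come in one stroke from the explicit inverse $\Psi(T)=s\circ T$, where $s$ is the $w^*$-limit of the summing functionals supplied by (a); verifying $(\Psi\Phi x^*)(e_n)=x^*(e_n)$ and $(\Phi\Psi T)e_n=Te_n$ and invoking density is a tidy substitute for the Cauchy argument in the converse half of the paper's Claim (your phrase ``agree with the identity on each $e_n$'' should be read in exactly this evaluated sense). Likewise, in $(1)\Rightarrow(2)$ you use $\Phi(\sum_{i\le n}e_i^*)=P_n$ and the multiplicativity $\Phi(x^*\cdot y^*)=\Phi(x^*)\Phi(y^*)$ on finitely supported functionals, where the paper computes directly against a norming vector; both are valid. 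What your route gives up is only the quantitative refinement and the norming-set reformulation of Theorem \ref{th1}, which the statement of Theorem \ref{thA} does not require.
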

The above theorem essentially concerns conditional bases of Banach
spaces. Indeed, assuming that \seq{e}{n} is an unconditional
basis, condition (2)(a) yields that \seq{e}{n} is equivalent to
the standard basis of $\ell_1(\N)$.

As a consequence of Theorem 1.1 we obtain the following.

\begin{theorem}\label{thB}
For every Banach space $Z$ with an unconditional subsymmetric
basis \seq{z}{n} there exists a Banach space $X$ with a Schauder
basis \seq{e}{n} such that $Z^*$ is isomorphic to a complemented
subspace of $\mc{L}_{\diag}(X)$.
\end{theorem}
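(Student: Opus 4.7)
The plan is to deduce Theorem~\ref{thB} from Theorem~\ref{thA} by constructing an appropriate Banach space $X$ from $Z$ and dualizing. Fix a normalized $1$-subsymmetric $1$-unconditional basis $(z_n)$ of $Z$ (after renorming, if necessary) and let $X$ be the completion of $c_{00}(\N)$ under the James-type norm
\[
\|x\|_X = \sup\left\{\left\|\sum_{j=1}^k\Big(\sum_{n=p_{j-1}+1}^{p_j} x_n\Big)z_j\right\|_Z : k\in\N,\ 0=p_0<p_1<\cdots<p_k\right\}.
\]
Then $(e_n)$ is a normalized monotone Schauder basis of $X$, and condition (2)(a) of Theorem~\ref{thA} is immediate on choosing $k=1$, $p_1=m$. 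The remaining tasks are to verify the submultiplicativity condition (2)(b) and to show that $Z$ embeds complementably into $X$; Theorem~\ref{thA} and dualization then conclude the proof.

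For (2)(b), Hahn--Banach shows that the $X^*$-norm of any $\phi\in X^*$ is the infimum of $\|\sum_j c_j z_j^*\|_{Z^*}$ over representations $\phi=\sum_j c_j\chi_{E_j}$ with $E_1<\cdots<E_k$ consecutive intervals. Given two such $\phi=\sum_j c_j\chi_{E_j}$ and $\psi=\sum_k d_k\chi_{F_k}$, their coordinate-wise product is itself a step functional on the common refinement, with coefficients $c_{j(\ell)}d_{k(\ell)}$ indexed by an \emph{injective} map $\ell\mapsto(j(\ell),k(\ell))$. Grouping the resulting element $\sum_\ell c_{j(\ell)}d_{k(\ell)}z_\ell^*\in Z^*$ by the first coordinate expresses it as $\sum_j c_j y_j$ with disjointly supported blocks $y_j=\sum_{\ell:\,j(\ell)=j}d_{k(\ell)}z_\ell^*$; subsymmetry and unconditionality of $(z_n^*)$ give $\|y_j\|_{Z^*}\le\|\sum_k d_k z_k^*\|_{Z^*}$. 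Since normalized block bases of a $1$-subsymmetric $1$-unconditional basis are equivalent to the original basis, one obtains $\|\sum_j c_j y_j\|_{Z^*}\lesssim\|\sum_j c_j z_j^*\|_{Z^*}\cdot\|\sum_k d_k z_k^*\|_{Z^*}$; taking infima over representations yields (2)(b).

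To realize $Z$ as a complemented subspace of $X$, I would study the block basis $u_n=e_{2n-1}-e_{2n}$. The singleton partition in the definition of $\|\cdot\|_X$ gives the lower estimate $\|\sum b_n u_n\|_X\gtrsim\|\sum b_n z_n\|_Z$ by unconditionality and subsymmetry of $(z_n)$; for the matching upper estimate, one observes that each interval sum of $\sum b_n u_n$ contains at most two ``boundary'' contributions of the form $\pm b_n$, so telescoping plus unconditionality yields $\|\sum b_n u_n\|_X\lesssim\|\sum b_n z_n\|_Z$. Thus $(u_n)$ is equivalent to $(z_n)$ in $X$. A bounded projection onto $\overline{\spann}\{u_n\}$ is provided by $P(x)=\sum_n\tfrac{x_{2n-1}-x_{2n}}{2}u_n$, whose boundedness reduces to the singleton-partition estimate $\|\sum_n x_{2n-1}z_n\|_Z,\,\|\sum_n x_{2n}z_n\|_Z\lesssim\|x\|_X$. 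Theorem~\ref{thA} now identifies $X^*$ with $\mc{L}_{\diag}(X)$, and the dual of $P$ exhibits $Z^*$ as a complemented subspace of $\mc{L}_{\diag}(X)$.

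The main obstacle is the submultiplicativity calculation: the key structural fact that makes it work is that the common refinement of two consecutive-interval partitions of $\N$ has length bounded by the \emph{sum} of the two partition lengths (not their product), which is precisely what enables the grouping step to produce disjointly supported blocks in $Z^*$ with norms controlled by $\|\sum_k d_k z_k^*\|_{Z^*}$.
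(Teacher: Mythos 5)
Your construction of $X$ is exactly the paper's Jamesification $J_Z$, and the second half of your outline (the block sequence $u_n=e_{2n-1}-e_{2n}$, its equivalence with \seq{z}{n}, the projection, and dualizing through Theorem \ref{th1}) is essentially the paper's argument and is fine. The genuine gap is in your verification of submultiplicativity. After grouping the pieces of the common refinement by the first coordinate you obtain multi-term blocks $y_j=\sum_{\ell:\,j(\ell)=j}d_{k(\ell)}z_\ell^*$, and you then invoke the principle that ``normalized block bases of a $1$-subsymmetric $1$-unconditional basis are equivalent to the original basis''. That principle is false in general; it essentially characterizes $c_0$ and $\ell_p$ (Zippin). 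Concretely, in Schlumprecht's space $S$, whose unit vector basis $(w_n)$ is $1$-unconditional and subsymmetric with $\|\sum_{i=1}^N w_i\|=N/f(N)$, $f(N)=\log_2(N+1)$, the normalized blocks $y_j=\frac{f(m)}{m}\sum_{i\in B_j}w_i$ over successive sets $B_j$ of cardinality $m$ satisfy $\|\sum_{j=1}^n\frac{f(n)}{n}y_j\|=\frac{f(n)f(m)}{f(nm)}\to\infty$, while $\|\sum_{j=1}^n\frac{f(n)}{n}w_j\|=1$ and $\|y_j\|=1$. Taking $Z=S^*$, so that $(z_n^*)$ is the basis of $S$, this shows the estimate you need, $\|\sum_j c_jy_j\|_{Z^*}\le C\|\sum_j c_jz_j^*\|_{Z^*}\cdot\|\sum_k d_kz_k^*\|_{Z^*}$, admits no uniform constant; so your proof of (2)(b) does not go through as written. (A smaller point: the Hahn--Banach claim that the $X^*$-norm of a step functional \emph{equals} the infimum over representations is both delicate and unnecessary, since the dual ball is the $w^*$-closed convex hull of the norming set; by bilinearity of the pointwise product it suffices to bound products of elements of the norming set, i.e.\ to verify condition (3)(b) of Theorem \ref{th1}.)

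The repair --- and this is what the paper does --- is to group so that each group consists of a \emph{single} piece of the refinement, so that no block-basis domination is ever used. For $f=\sum_i a_i\chi_{I_i}$ and $g=\sum_j \beta_j\chi_{E_j}$ in the norming set $K$, for each $j$ let $i_j$ be the minimal $i$ with $I_{i}\cap E_j\neq\emptyset$ and set $h_1=\sum_j a_{i_j}\beta_j\chi_{I_{i_j}\cap E_j}$: it has one piece per $j$, its coefficient sequence is $(a_{i_j}\beta_j)_j$ with $|a_{i_j}|\le 1$, so $1$-unconditionality and subsymmetry of $(z_n^*)$ give $h_1\in K$. The remaining pieces number at most one per $i$ --- this is the correct exploitation of the ``sum, not product'' count (Lemma \ref{L20}) --- so they form $h_2=\sum_{i\in A}a_i\beta_{j_i}\chi_{I_i\cap E_{j_i}}\in K$ by the symmetric argument. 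Hence $f\cdot g=h_1+h_2$, i.e.\ $K\cdot K\subset K+K$, and (3)(b) holds with $C_2=2$. Note that your own upper estimate for $\|\sum_n b_nu_n\|_X$ (at most two boundary terms per interval, split into left ends and right ends, each family contributing at most one term per index) is precisely this device; it, and not block-basis equivalence, is what must carry the submultiplicativity step as well.
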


Theorem \ref{thA} also yields the existence of a variety of Banach
spaces  $X$ with a Schauder basis \seq{e}{n} sharing the property
that $X^*$ is isomorphic to $\mc{L}_{\diag}(X)$. In particular,
using a slight modification of the James tree space (\cite{J}), we
obtain the following.

\begin{theorem}\label{thC}
There exists a Banach space $X$  with a Schauder basis \seq{e}{n}
such that $X^*$ is isomorphic to $\mc{L}_{\diag}(X)$, $X^*$ is
nonseparable and does not contain $\ell_1(\N)$ or $c_0(\N)$.
\end{theorem}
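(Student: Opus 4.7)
The strategy is to construct $X$ as a variant of James' tree space $JT$ that fulfils conditions (2)(a)--(2)(b) of Theorem~\ref{thA}; Theorem~\ref{thA} will then yield $X^*\cong\mc{L}_{\diag}(X)$, while the tree structure reproduces the classical $JT$-properties, namely non-separability of $X^*$ and the absence of $\ell_1$ and $c_0$ in it. Concretely, I enumerate the dyadic tree $\mc{T}$ breadth-first as $(t_n)_{n\in\N}$, so that indices are strictly increasing along every branch and every initial $\N$-interval meets any tree-segment in a sub-segment, and set $e_n = e_{t_n}$. The norming set $G\subseteq c_{00}(\N)$ consists of all $\pm 1$-valued functionals $\sum_{i\in A}\epsilon_i e_i^*$ with $A$ either a segment of $\mc{T}$ or an initial $\N$-interval $[1,n]$, and the norm is the $JT$-type aggregation
\[
\|x\|_X=\sup\Bigl\{\Bigl(\sum_{j=1}^k|f_j(x)|^2\Bigr)^{1/2}:f_1,\dots,f_k\in G,\text{ pairwise disjoint supports}\Bigr\}.
\]

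Condition (2)(a) is immediate since $\sum_{i=1}^n e_i^*\in G$ already gives $\|\sum_{i=1}^n e_i^*\|_{X^*}\le 1$. For condition (2)(b), I would first establish the dual description
\[
\|\phi\|_{X^*}=\inf\Bigl\{\Bigl(\sum_j|\lambda_j|^2\Bigr)^{1/2}:\phi=\sum_j\lambda_j f_j,\; f_j\in G\text{ with pairwise disjoint supports}\Bigr\}
\]
by a standard Hahn--Banach/duality argument, and then exploit the structural fact that the admissible index sets are closed under intersection (the breadth-first enumeration ensures that an initial interval meets a tree-segment in a tree-segment). Given near-optimal decompositions $\phi=\sum_j\lambda_j f_j$ and $\psi=\sum_k\mu_k g_k$, the coordinatewise product is $\phi\cdot\psi=\sum_{j,k}\lambda_j\mu_k(f_j\cdot g_k)$, where each $f_j\cdot g_k$ is, up to sign, a $G$-functional or zero, and the family $(f_j\cdot g_k)$ has pairwise disjoint supports since each coordinate lies in at most one $f_j$ and at most one $g_k$. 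The dual description then yields $\|\phi\cdot\psi\|_{X^*}^2\le\sum_{j,k}|\lambda_j\mu_k|^2=\|\phi\|_{X^*}^2\cdot\|\psi\|_{X^*}^2$, establishing (2)(b) with constant $C=1$. Pinning down this dual description, so that the intersection argument can be run cleanly, is the main technical obstacle of the whole proof.

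Theorem~\ref{thA} then provides $X^*\cong\mc{L}_{\diag}(X)$, and it remains to recover the classical $JT$-properties. For non-separability, for each branch $\sigma$ of $\mc{T}$ the partial sums $\phi_\sigma^N=\sum_{t_n\in\sigma,\,n\le N}e_n^*$ are norm-Cauchy in $X^*$ (any admissible family meets $\sigma$ in at most one sub-interval per slot, so tails have $\ell_2$-controllable norm), defining $\phi_\sigma\in X^*$. Distinct branches $\sigma\neq\tau$ parting at some level $\ell$ satisfy $\|\phi_\sigma-\phi_\tau\|_{X^*}\ge c>0$ uniformly, and the continuum of branches produces an uncountable $c$-separated family in $X^*$.

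Finally, the absence of $\ell_1$ and $c_0$ in $X^*$ is obtained by mimicking the classical $JT$ analysis. A standard block-decomposition argument shows that every normalised block sequence of $(e_n)$ admits a uniform $\ell_2$-upper bound from the tree-segment aggregation, so $(e_n)$ is shrinking; consequently $X^*$ has a boundedly complete basis and contains no copy of $c_0$. The exclusion of $\ell_1\subseteq X^*$ proceeds via the standard $JT$-type convex-block argument: every normalised weakly-null sequence in $X^*$ admits a convex block whose $\ell_2$-aggregated norm tends to zero. The additional initial-interval functionals contribute only one summand per admissible family and can be absorbed into the usual $JT$ bookkeeping, so the classical arguments carry over with only cosmetic changes.
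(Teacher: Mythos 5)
Your overall construction (a Jamesified dyadic tree space fed into Theorem \ref{thA}) is the same as the paper's, but the step you yourself single out as the main obstacle is not merely hard — it is false. The claimed isometric dual description
$\|\phi\|_{X^*}=\inf\{(\sum_j|\lambda_j|^2)^{1/2}:\ \phi=\sum_j\lambda_jf_j,\ f_j\in G\ \text{disjointly supported}\}$
fails already for $\phi=e_r^*+\frac12e_s^*+\frac12e_t^*=\frac12(\chi_{\{r,s\}}+\chi_{\{r,t\}})$, where $r$ is the root and $s,t$ its two children: $\|\phi\|_{X^*}\le 1$ as a convex combination of two norm-one functionals, while any disjointly supported decomposition into $G$-pieces must put $r$, $s$, $t$ into three separate pieces ($\{s,t\}$ is neither a segment nor an initial interval), so your infimum equals $\sqrt{3/2}>1$. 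Hence your route to condition (2)(b) cannot be completed as stated. The repair — and the paper's route — is to avoid the dual norm entirely and use condition (3) of Theorem \ref{th1} instead: the set that actually $1$-norms your norm is not $G$ but $K=\{\sum_ia_i\chi_{A_i}:\ A_i$ pairwise disjoint, each a finite interval of $\N$ or a finite tree segment, $\sum_ia_i^2\le1\}$, and for this $K$ one has $\pm\chi_{\{1,\dots,n\}}\in K$ and $K\cdot K\subset K$, simply because the admissible sets are closed under intersection and $\sum_{i,j}a_i^2b_j^2\le1$; Theorem \ref{th1} then yields the (iso)morphism with $\mc{L}_{\diag}(X)$ with no dual-norm formula needed. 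Note also that your $G$ must consist of constant-sign functionals $\pm\chi_A$: if arbitrary signs $\epsilon_i$ inside a segment are allowed, the nodes along a branch span an isometric $\ell_1$ in $X$, and then $\ell_1$ embeds into $X^*$ as well (via $L_1$), destroying the conclusion.

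Your final paragraph also contains genuine errors rather than cosmetic ones. The basis of a $JT$-type space is not shrinking — if it were, $X^*=\overline{\spann}\{e_n^*:\ n\in\N\}$ would be separable, contradicting the nonseparability you establish two sentences earlier — and normalized block (or basis) sequences along a single branch satisfy $\|\sum_{k=1}^ne_{t_k}\|\ge n$, so there is no uniform $\ell_2$ upper bound; moreover the nonseparable space $X^*$ has no basis at all, boundedly complete or otherwise. The correct classical route, which is what the paper invokes, is: prove (as for $JT$) that $X$ contains no copy of $\ell_1$, whence $c_0$ cannot embed in the dual $X^*$ (Bessaga--Pe\l czy\'nski), and prove $X^{**}\cong X\oplus\ell_2(c)$, which excludes $\ell_1\hookrightarrow X^*$; nonseparability of $X^*$ comes from the continuum of branch functionals, as in your sketch.
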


Let us also mention that A. Sersouri has shown in \cite{Se} that
if the basis \seq{e}{n} of the Banach space $X$ is either
boundedly complete or shrinking, then $\mc{L}_{\diag}(X)$
coincides with the second dual of the space  $\mc{K}_{\diag}(X)$
of compact diagonal operators.

In the second part of the paper we construct a Hereditarily
Indecomposable (HI) Banach space $\eqs_D$ with a Schauder basis
\seq{e}{n} such that $\mc{L}_{\diag}(\eqs_D)$ is also HI. More
precisely, the following is shown.

 \begin{theorem}\label{thD}
  There exists a quasi reflexive HI Banach space $\eqs_D$ with a Schauder basis
  \seq{e}{n} satisfying the following.
 \begin{enumerate}
  \item[(i)] The  space $\eqs_D^*$ is Hereditarily Indecomposable.
  \item[(ii)] The spaces $\eqs_D^*$ and $\mc{L}_{\diag}(\eqs_D)$
  are isometric.
  \item[(iii)] Every $T\in \mc{L}_{\diag}(\eqs_D)$ is
  of the form
  $T=\lambda I+K$, where $K$ is a compact operator.
 \end{enumerate}
 \end{theorem}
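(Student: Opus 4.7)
The plan is to construct $\eqs_D$ as a mixed Tsirelson space built from a norming set $W\subseteq \co$ via an Argyros--Deliyanni-style saturation with operations $(\mathcal{S}_{n_j},\theta_j)$, augmented by a James-type overlay that forces $(e_n)$ to dominate the summing basis (condition~(2)(a) of Theorem~\ref{thA}). The norm is $\|x\|=\sup_{f\in W}f(x)$, and the critical design choice is that $W$ be closed (in a weighted, admissible sense) under coordinatewise products of its elements. This closure is tailored so that condition~(2)(b) of Theorem~\ref{thA} holds with constant $C=1$; together with the summing-domination, Theorem~\ref{thA} then yields directly the isometric identification~(ii), $\eqs_D^*\cong \mc{L}_{\diag}(\eqs_D)$.

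The HI property of $\eqs_D$ (and, along the way, its quasi-reflexivity) is obtained by the usual rapidly increasing sequence / $\ell_1^k$-average machinery adapted to the chosen operations. For statement~(i), namely the HI property of $\eqs_D^*$, one has to transplant the standard HI analysis to the dual: one produces RIS-type block sequences of functionals in $W$ and shows that any two block subspaces of $\eqs_D^*$ contain vectors that are nearly equal in norm. The isometric submultiplicativity of the dual norm is precisely the tool that enables the required dual estimates to close.

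For~(iii), let $s^*=\sum_n e_n^*\in \eqs_D^*$ be the summing functional; under the isometry of (ii), $s^*$ is sent to the identity operator $I$ on $\eqs_D$. Quasi-reflexivity of order one, combined with the non-shrinking nature of $(e_n)$, yields a topological direct-sum decomposition $\eqs_D^*=\langle s^*\rangle\oplus Y^*$, where $Y^*=\overline{\spann}(e_n^*)$, so every $x^*\in \eqs_D^*$ has a unique decomposition $x^*=\lambda s^*+y^*$ with $y^*\in Y^*$. Since $(e_n^*)$ is a Schauder basis of $Y^*$, the partial sums $y^*_N=\sum_{n\le N}y^*(e_n)e_n^*$ converge in norm to $y^*$. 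Applying the isometry $\|T_{\xi^*}\|_{\mc{L}(\eqs_D)}=\|\xi^*\|_{\eqs_D^*}$, the finite-rank diagonal operators $T_{y^*_N}$ converge in operator norm to $T_{y^*}$, so $T_{y^*}$ is compact and $T_{x^*}=\lambda I+T_{y^*}$ has the required form.

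The principal obstacle is to arrange, within a single norming set $W$, the coexistence of three competing demands: \textbf{(a)} the saturation and conditionality needed for HI estimates on both $\eqs_D$ and $\eqs_D^*$, \textbf{(b)} the product-closure that gives isometric submultiplicativity of the dual norm, and \textbf{(c)} the James-tree ingredient producing quasi-reflexivity together with the summing functional. Standard HI norming sets are not closed under products, and James-tree modifications tend to destroy delicate basic inequalities; the technical heart of the construction is the calibration of the parameters $(n_j,\theta_j)$ and of the product rule so that the RIS estimates still close after the product and tree operations have been incorporated.
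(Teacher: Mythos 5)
Your plan coincides with the paper's own route in every essential respect: a norming set containing the functionals $\pm\chi_I$ (to force summing-basis domination) and closed under pointwise products so that Theorem \ref{thA} applies with constants $C_1=C_2=1$ and gives the isometry (ii), the RIS/dependent-sequence machinery for the HI property and quasireflexivity of $\eqs_D$, the HI property of $\eqs_D^*$ obtained through the predual $\overline{\spann}\{e_n^*:n\in\N\}$, and, for (iii), exactly the paper's argument that the isometry carries $\overline{\spann}\{e_n^*\}$ onto $\mc{K}_{\diag}(\eqs_D)$ and the summing functional $\chi_\N$ onto $I$, so the codimension-one splitting yields $T=\lambda I+K$. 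The calibration you flag as the principal obstacle (making the basic inequality and the alternating-sign estimates survive product closure) is precisely what the paper's Sections 4--5 carry out, so your outline is the same approach and is correct as far as it goes.
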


As  pointed out in \cite{AH},  for every  Banach space $X$, the
space $X^*$ is isomorphic to  a complemented subspace of
$\mc{L}(X)$ consisting of rank one operators. Therefore neither
$\mc{L}(X)$ nor $\mc{K}(X)$ can be indecomposable spaces. Also, as
shown in \cite{ADT}, there exist HI spaces having strictly
singular non-compact diagonal operators, therefore one could not
expect property (iii) to hold in general within the class of HI
spaces with a Schauder basis. Recently, R. Haydon and the first
named author (\cite{AH}) have presented a $\mc{L}^{\infty}$ HI
space $\eqs_K$ such that every $T\in \mc{L}(\eqs_K)$ is of the
form $T=\lambda I +K$ with $K$ a compact operator.
  However, the scalar plus compact
 problem remains open for reflexive Banach spaces. The weaker question whether
 there exists a reflexive space $X$ with a
 Schauder basis \seq{e}{n} such that every diagonal operator $T$
 is of the form $\lambda I+K$ where $K$ is compact, is also open
 and its solution could be a first step towards the solution of the general one.
 The systematic study of the existence of strictly singular
 non-compact operators on the known HI spaces (see \cite{AnSc},
 \cite{ADT}, \cite{Be}, \cite{Ga}, \cite{G1}), indicates that an
 example of a reflexive space answering the scalar plus compact problem, in
  the general or the weaker form, requires new approaches of HI
  constructions.

The paper is organized as follows. Section 2 contains the more
precise statement of Theorem \ref{thA} (Theorem \ref{th1}). Its
proof uses rather standard arguments. We also present the proofs
of Theorems \ref{thB} and \ref{thC}.

The rest of the paper is devoted to the space $\eqs_D$. In section
3, we define its norming set $D$ which is a subset of
$c_{00}(\N)$. The space $\eqs_D$ is the completion of
$(c_{00}(\N),\|\;\|_D)$, where $\|\;\|_D$ is the norm induced on
$c_{00}(\N)$ by $D$. For the definition of $D$ we use as a ground
set the set $G=\{\pm \chi_I:\; I \mbox{ finite interval of }\N\}$
and we apply saturation with respect to the operations
$(\mc{A}_{n_j},\frac{1}{m_j})_j$. As usual, for  even indices $j$
we apply full saturation, while the operations corresponding to
odd indices $j$ are used in order to define the special
functionals as in all previous HI constructions, initialized by
the W.T. Gowers and B. Maurey example \cite{GM1}. Once more, the
present construction can be viewed as an HI extension of a ground
set $G$, according to the approach of \cite{AT1} and \cite{ArTo}.

The novelty of the present construction arises from the need to
impose a Banach algebra structure on the dual of the space. For
this purpose, in each inductive step of the definition of the
norming set $D=\bigcup\limits_{n=0}^{\infty}D_n$, we close the set
$D_n$ under the pointwise products of its elements. This is
necessary in order to apply Theorem \ref{thA} and get the
isomorphism between $\eqs_D^*$ and $\mc{L}_{\diag}(\eqs_D)$. As
shown in \cite{AT1}, for a ground set $G\subset c_{00}(\N)$ such
that $X_G$ does not contain any isomorphic copy of $\ell_1(\N)$,
there exists a $D_G\subset c_{00}(\N)$ containing $G$ such that
$X_{D_G}$ is HI. It is worth pointing out that there are ground
sets $G$ as above, such that for any $D_G$ containing $G$ with
$D_G$ closed under pointwise products of its elements, the
corresponding space $X_{D_G}$ is decomposable and hence is not HI.
For example, let $L\subset \N$ such that both $L,\N\setminus L$
are infinite and consider the ground set $G=\{\pm \chi_{I},\; \pm
\chi_{L\cap I},\;I \mbox{ finite interval of }\N \}$. Then for
every extension $D_G$ of $G$ with $D_G$ being closed under
pointwise products, the corresponding space $X_{D_G}$ is
decomposable. Indeed, it is easy to see that the subspace
$X_L=\overline{\spann}\{e_n:\;n\in L\}$ is complemented in
$X_{D_G}$.

Section 4 is devoted  to the basic inequality and some of its
consequences. The basic inequality is the main tool for providing
upper estimates for the action of functionals of $D$ on averages
of Rapidly Increasing Sequences. Its proof in the present paper is
similar to the proof of the corresponding result in \cite{ADT}. In
section 5, we proceed to the evaluation of the norm of averages
resulting from dependent sequences with alternating signs. Our
approach for this, is more complicated  than in previous
constructions, where this result is a direct consequence of the
basic inequality and the tree structure of the special sequences.
 This is due to the
fact that closing the set $D$ under pointwise products of its
elements, we enlarge the unconditional structure of the spaces
$\eqs_D$, $\eqs_D^*$. Thus showing the HI property of the space
$\eqs_D$ becomes more involved and delicate.

The HI property of $\eqs_D^*$ is proved in section 6. The
isomorphism between $\eqs_D^*$  and
$\mc{L}_{\diag}(\eqs_D,\seq{e}{n})$ is an immediate consequence of
Theorem \ref{thA} and the definition of the norming set $D$.

We close this section  by pointing out that it is not clear if,
for every Schauder basis \seq{x}{n} of $\eqs_D$, the corresponding
space $\mc{L}_{\diag}(\eqs_D,\seq{x}{n})$  is isomorphic to
$\eqs_D^*$ or if it is Hereditarily Indecomposable.

 \section{On the isomorphism between  $\mc{L}_{\diag}(X)$ and  $X^*$}

In this section, we give the precise statement and the proof of
the characterization of the Banach spaces $X$ with a Schauder
basis $(e_n)_{n\in {\mathbb N}}$ which have the property that the
dual space $X^*$ is naturally isomorphic the space
$\mc{L}_{\diag}(X)$ of the diagonal operators with respect to
$\seq{e}{n}$. We also state and prove Theorems \ref{thB} and
\ref{thC}.

 We start with some preliminary notation.
 We denote by $c_{00}(\N)$ the space of all eventually zero
 sequences of reals, by $e_1,e_2,\ldots$ its  standard Hamel
 basis, while for $x=\sum a_i e_i\in
 c_{00}(\N)$ the support  $\supp x$ of $x$ is the set
 $\supp x=\{i\in\N:\;a_i\neq 0\}$.
  For $E,F$ nonempty finite subsets of $\N$, we
 write $E<F$ if $\max E<\min F$, while for nonzero $x,y\in
 c_{00}(\N)$ we write $x<y$ if $\supp x<\supp y$.
 For $x,y\in c_{00}(\N)$,
 $x=\sum a_ie_i$, $y=\sum \beta_ie_i$ the pointwise product of $x,y$
 is the vector $x\cdot y=\sum\limits a_i \beta_i e_i$.
 For $f\in c_{00}(\N)$ and $E\subset\N$ we denote by $Ef$ the pointwise
 product $\chi_E\cdot f$.
 For a finite set $F$, we denote its cardinality by $\# F$. For
 $K,L\subset c_{00}(\N)$ we denote $K+ L=\{f+g:\;f\in K,\;g\in
 L\}$ and $K\cdot L=\{f\cdot g:\;f\in K,\;g\in
 L\}$.

\begin{notation}\label{not1}
Let $(X,\|\cdot\|)$ be a Banach space with a Schauder basis
\seq{e}{n}. We denote by $\mc{L}_{\diag}(X,\seq{e}{n})$ the algebra
of all bounded linear diagonal operators of $X$ with respect to the basis
\seq{e}{n}, i.e.
\begin{equation*}
  \mc{L}_{\diag}(X,\seq{e}{n})=\{T\in\mc{L}(X):\; \exists (\lambda_n)_{n\in\N}\in\R^\N\mbox{
  such that }
  Te_n=\lambda_ne_n,\;\forall n\}.
\end{equation*}
When the basis  is a priori fixed we use the notation
$\mc{L}_{\diag}(X)$.

 For every $n$, we denote by $\mathbf{\overline{e}}_n$  the rank one
 operator $\mathbf{\overline{e}}_n=e_n^*\otimes e_n:X\to X$, i.e. the
 diagonal operator
 defined by the rule
 $\mathbf{\overline{e}}_n(\sum\limits_{i=1}^{\infty}\mu_ie_i)=\mu_ne_n$.
 \end{notation}

 \begin{remark}
 In the case the basis \seq{e}{n} of the space $X$ is unconditional, the
 algebra $\mc{L}_{\diag}(X)$  is isomorphic to $\ell_{\infty}(\N)$.
 \end{remark}

 \begin{definition}
 Let $X$ be a Banach space with a Schauder basis \seq{e}{n}. We
 say that
 \seq{e}{n} dominates the summing basis with constant $C_1$, if
  for every finite sequence of scalars
 $(\mu_i)_{i=1}^n$ it holds  that $C_1\cdot
 |\sum\limits_{i=1}^n\mu_i|\le \|\sum\limits_{i=1}^n\mu_ie_i\|$.
 \end{definition}

 \begin{theorem}\label{th1}
 Let $(X,\|\cdot\|)$ be a Banach space with a normalized monotone
 Schauder basis \seq{e}{n}. Let also $C_1,C_2>0$.
 The statements (1), (2), (3) are equivalent:\\
 (1)\; The operator $\Phi:X^*\to \mc{L}_{\diag}(X)$ defined by the rule
    \[
     w^*-\sum\limits_{n=1}^{\infty}\lambda_n e_n^* \;\;
     \longmapsto\;\;
       SOT-\sum\limits_{n=1}^{\infty}\lambda_n
  \mathbf{\overline{e}}_n
     \]
     is well defined,
  onto and $C_1\cdot\|\sum\limits_{n=1}^{\infty}\lambda_n e_n^*\|
  \le \|\sum\limits_{n=1}^{\infty}\lambda_n
  \mathbf{\overline{e}}_n\|\le
  C_2\cdot\|\sum\limits_{n=1}^{\infty}\lambda_n e_n^*\|$ for every
   $x^*=w^*-\sum\limits_{n=1}^{\infty}\lambda_ne_n^*\in X^*$. \\
 (2)\; (a)\; The Schauder basis \seq{e}{n}  dominates the summing
 basis with constant $C_1$.

 (b)\; The norm of $X^*$ is submultiplicative with constant $C_2$, i.e.
 \[\|\sum\limits_{i=1}^na_i\beta_ie_i^*\|\le
 C_2\cdot\|\sum\limits_{i=1}^na_ie_i^*\|\cdot
 \|\sum\limits_{i=1}^n\beta_ie_i^*\|\] for every $n\in \N$ and
 every choice of scalars
 $a_1,\beta_1,a_2,\beta_2,\ldots,a_n,\beta_n\in \R$.\\
 (3)\; There exists a 1-norming set $K$ of $X$,
  contained in the linear span of $(e_n^*)_{n\in\N}$,
 such that

 (a)\; $\pm C_1 \cdot\sum\limits_{i=1}^{n}e_i^*\in K$ for all $n$.

 (b)  $K\cdot K\subset C_2\cdot B_{X^*}$.
 \end{theorem}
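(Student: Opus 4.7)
The plan is to proceed around the cycle $(1)\Rightarrow (2)\Rightarrow (3)\Rightarrow (1)$. The algebraic backbone is the identity $\mathbf{\overline{e}}_i\circ \mathbf{\overline{e}}_j=\delta_{ij}\mathbf{\overline{e}}_i$, so that whenever $\Phi$ is defined it is a homomorphism from the pointwise-product algebra on $X^*$ to the composition algebra $\mc{L}_{\diag}(X)$: one has $\Phi(x^*\cdot y^*)=\Phi(x^*)\Phi(y^*)$. A recurring construction is that, for any finitely supported $f=\sum a_ie_i^*$, the multiplier $M_f\colon X\to X$ defined by $M_f(\sum y_ie_i)=\sum a_iy_ie_i$ satisfies $M_f^*g=f\cdot g$ for every $g\in X^*$, and $M_f$ coincides as an operator with $\Phi(f)$.

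For $(1)\Rightarrow (2)$: applying $\Phi$ to $s_n=\sum_{i=1}^ne_i^*$ yields the basis projection $P_n$, whose norm is $1$ by monotonicity of the basis; the lower estimate of (1) then forces $\|s_n\|\leq 1/C_1$, which dualises to (2a). The submultiplicativity (2b) follows by combining the homomorphism property with the two-sided estimate: $C_1\|x^*\cdot y^*\|\leq \|\Phi(x^*)\Phi(y^*)\|\leq \|\Phi(x^*)\|\|\Phi(y^*)\|\leq C_2^2\|x^*\|\|y^*\|$, so (2b) holds up to a multiplicative constant of order $C_2^2/C_1$.

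For $(2)\Rightarrow (3)$ take $K=B_{X^*}\cap\spann\{e_n^*\}$. It is $1$-norming because, for any $y\in X$ and $x^*\in B_{X^*}$, the adjoint partial-sum projections produce $P_N^*x^*\in K$ with $(P_N^*x^*)(y)\to x^*(y)$; conditions (3a), (3b) are just (2a), (2b) restricted to $K$.

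The main work is in $(3)\Rightarrow (1)$. For any $h\in K$ one extracts $\|M_h\|\leq C_2$ from (3b) via the $1$-norming property of $K$:
\[
\|M_hy\|=\sup_{k\in K}|k(M_hy)|=\sup_{k\in K}|(k\cdot h)(y)|\leq C_2\|y\|.
\]
Dually, $\|h\cdot g\|=\|M_h^*g\|\leq C_2\|g\|$ for every $g\in X^*$, which upgrades, by repeating the argument, to $\|M_f\|\leq C_2\|f\|$ for every finite-support $f\in\spann\{e_n^*\}$. Given $x^*=w^*\text{-}\sum\lambda_ne_n^*\in X^*$, the truncations $P_N^*x^*$ satisfy $\|P_N^*x^*\|\leq \|x^*\|$ by monotonicity, so the operators $T_N=M_{P_N^*x^*}$ are uniformly bounded by $C_2\|x^*\|$; a standard Cauchy/density argument then produces $T=\mathrm{SOT}\text{-}\lim T_N\in\mc{L}_{\diag}(X)$ with $Te_n=\lambda_ne_n$ and $\|T\|\leq C_2\|x^*\|$. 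The reverse estimate $\|T\|\geq C_1\|x^*\|$ uses (3a): for finitely supported $y=\sum\mu_ne_n$ the summing domination gives $|x^*(y)|=|\sum\lambda_n\mu_n|\leq (1/C_1)\|Ty\|\leq (1/C_1)\|T\|\|y\|$. Surjectivity of $\Phi$ is dual: for $T\in\mc{L}_{\diag}(X)$ the summing domination forces $\sum\lambda_n\mu_n$ to converge for every $y\in X$, producing the required $x^*\in X^*$. The principal obstacle, and the only genuinely non-cosmetic ingredient, is this extension step that promotes the setwise bound $K\cdot K\subset C_2B_{X^*}$ to an operator estimate on every multiplier $M_f$; the remainder of the proof is a chain of duality and density arguments.
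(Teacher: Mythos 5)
Your overall architecture is sound, and your direct proof of $(3)\Rightarrow(1)$ is a genuinely different (and tidy) route from the paper's: the paper goes $(3)\Rightarrow(2)\Rightarrow(1)$, whereas you extract the multiplier bound $\|M_f\|\le C_2\|f\|$ for all finitely supported $f$ straight from $K\cdot K\subset C_2 B_{X^*}$ (first for $f\in K$ via the $1$-norming property, then for general $f$ by the duality $k(M_fy)=f(M_ky)$), and then pass to SOT limits of the uniformly bounded truncations $M_{P_N^*x^*}$; the lower estimate and surjectivity via the summing domination are essentially the paper's arguments. That leg is fine.

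The genuine gap is in $(1)\Rightarrow(2)$(b). The theorem asserts the equivalence of (1), (2), (3) \emph{with the same constants} $C_1,C_2$, and (2)(b) demands submultiplicativity with constant exactly $C_2$. Your argument
\[
C_1\|x^*\cdot y^*\|\le\|\Phi(x^*)\Phi(y^*)\|\le\|\Phi(x^*)\|\,\|\Phi(y^*)\|\le C_2^2\|x^*\|\,\|y^*\|
\]
only yields the constant $C_2^2/C_1$, as you concede. Since a normalized monotone basis forces $C_1\le 1\le C_2$, one has $C_2^2/C_1\ge C_2$ with equality only when $C_1=C_2=1$, so in general you have proved a strictly weaker statement than (2)(b); moreover, with the degraded constant your cycle $(1)\Rightarrow(2)\Rightarrow(3)\Rightarrow(1)$ no longer closes for the fixed pair $(C_1,C_2)$, so the stated equivalence is not established. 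The repair is the paper's computation, which uses only the \emph{upper} estimate in (1) and no homomorphism sandwich: pick $x=\sum_i\mu_ie_i\in B_X$ norming $\sum_i a_i\beta_ie_i^*$ and write
\[
\Big(\sum_i a_i\beta_ie_i^*\Big)(x)=\Big(\sum_i a_ie_i^*\Big)\Big(\big(\sum_i\beta_i\mathbf{\overline{e}}_i\big)(x)\Big)
\le \Big\|\sum_i a_ie_i^*\Big\|\cdot\Big\|\sum_i\beta_i\mathbf{\overline{e}}_i\Big\|\le C_2\Big\|\sum_i a_ie_i^*\Big\|\cdot\Big\|\sum_i\beta_ie_i^*\Big\|,
\]
which gives (2)(b) with the exact constant $C_2$. (A cosmetic point in the same spirit: in $(2)\Rightarrow(3)$, "(3a) is (2a) restricted to $K$" still needs the one-line verification $\|\,C_1\sum_{i=1}^n e_i^*\|\le 1$, which uses (2)(a) together with monotonicity of the basis.)
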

 \begin{proof}[\bf Proof.]
 We first show that $(2)\Longrightarrow(3)$. Suppose that (2) holds and set \[K=\big\{\sum\limits_{i=1}^na_ie_i^*:\;
 \|\sum\limits_{i=1}^na_ie_i^*\|\le 1, \;n\in\N\big\}.\] It is clear
 that (3)(b) is satisfied, as a consequence of (2)(b).
 Let us verify (3)(a).  For every $x=\sum\limits_{i=1}^{\infty}\mu_ie_i\in B_X$,
 using condition (2)(a) and the monotonicity of the Schauder basis $(e_i)_{i\in\N}$,
 we get that
  \[ |(\sum\limits_{i=1}^ne_i^*)(x)|=|\sum\limits_{i=1}^n\mu_i|\le
  \frac{1}{C_1}\cdot \|\sum\limits_{i=1}^n\mu_ie_i\|\le
  \frac{1}{C_1}\cdot \|\sum\limits_{i=1}^\infty\mu_ie_i\|\le\frac{1}{C_1}.\]
  This implies that $\|\sum\limits_{i=1}^ne_i^*\|\le\frac{1}{C_1}$, hence
  $\pm C_1\cdot\sum\limits_{i=1}^ne_i^*\in K$.

 Let us show the inverse implication $(3)\Longrightarrow(2)$.
 Suppose that (3) holds and let $n\in\N$, $\mu_1,\ldots,\mu_n\in\R$. Since
 $\pm C_1\sum\limits_{i=1}^ne_i^*\in K$
  the action of these functionals on the vector $\sum\limits_{i=1}^n\mu_ie_i$ implies that
  $\|\sum\limits_{i=1}^n\mu_ie_i\|\ge C_1\cdot |\sum\limits_{i=1}^n\mu_i|$, thus (2)(a) is
  satisfied. From condition (3)(b) we get that
  $\conv(K)\cdot \conv(K)\subset C_2\cdot B_{X^*}$ and hence
 $\overline{\conv(K)}^{w^*}\cdot \overline{\conv(K)}^{w^*}
 \subset C_2 \cdot B_{X^*} $.
  Since $K$ is a 1-norming set of the space $X$, this
  means that $B_{X^*}\cdot B_{X^*}\subset C_2\cdot B_{X^*}$, which yields (2)(b).

 Next we show the implication $(1)\Longrightarrow(2)$. Suppose
 that (1) is true. We shall first show
 (2)(a). We observe that for every $n\in\N$ and $x=\sum\limits_{i=1}^{\infty}\mu_ie_i\in X$ we
 have that
 $\|(\sum\limits_{i=1}^n \mathbf{\overline{e}}_i)(x)\|=\|\sum\limits_{i=1}^n \mu_ie_i\|\le \|x\|$,
 as a consequence of the monotonicity of the
 basis. Thus $\|\sum\limits_{i=1}^n \mathbf{\overline{e}}_i\|\le 1$,
   which
 implies that $C_1\|\sum\limits_{i=1}^n e_i^*\|\le 1$.
 Therefore for any $\mu_1,\ldots,\mu_n\in \R$ we get that
 \[
 C_1|\sum\limits_{i=1}^n\mu_i|=C_1|(\sum\limits_{i=1}^ne_i^*)(\sum\limits_{i=1}^n\mu_ie_i)|\le
 C_1\|\sum\limits_{i=1}^ne_i^*\|\cdot
 \|\sum\limits_{i=1}^n\mu_ie_i\|\le
 \|\sum\limits_{i=1}^n\mu_ie_i\|\]
 and we have shown (2)(a).

 Let now $n\in\N$ and  $a_1,\beta_1,a_2,\beta_2,\ldots,a_n,\beta_n\in \R$.
 We choose $x=\sum\limits_{i=1}^{\infty}\mu_ie_i\in B_X$, such that
 $\|\sum\limits_{i=1}^na_i\beta_ie_i^*\|
 =(\sum\limits_{i=1}^na_i\beta_ie_i^*)(x)
 =\sum\limits_{i=1}^na_i\beta_i\mu_i$.
 Then
 \begin{eqnarray*}
 \|\sum\limits_{i=1}^na_i\beta_ie_i^*\| & = &
       (\sum\limits_{i=1}^na_ie_i^*)(\sum\limits_{i=1}^n\beta_i\mu_ie_i)  \le
     \|\sum\limits_{i=1}^na_ie_i^*\| \cdot \|\sum\limits_{i=1}^n\beta_i\mu_ie_i\|  \\
  & = & \|\sum\limits_{i=1}^na_ie_i^*\| \cdot
         \| (\sum\limits_{i=1}^n\beta_i
         \mathbf{\overline{e}}_i)(\sum\limits_{i=1}^n\mu_ie_i)\|\\
     &     \le & \|\sum\limits_{i=1}^na_ie_i^*\| \cdot
   \| \sum\limits_{i=1}^n\beta_i \mathbf{\overline{e}}_i\|  \cdot     \|\sum\limits_{i=1}^n\mu_ie_i\|\\
  & \le  & \|\sum\limits_{i=1}^na_ie_i^*\| \cdot
    C_2 \| \sum\limits_{i=1}^n\beta_i e_i^*\| \cdot \|x\| \\
 & \le &
  C_2\cdot \|\sum\limits_{i=1}^na_ie_i^*\|\cdot  \| \sum\limits_{i=1}^n\beta_i e_i^*\|
  \end{eqnarray*}
 which completes the proof of (2)(b).

Finally we prove that   $(2)\Longrightarrow(1)$. Suppose that (2)
holds. We start with the following claim.
\begin{claim}
The series $\sum\limits_{n=1}^{\infty}\lambda_ne_n^*$ is $w^*$
convergent in $X^*$ if and only if the series
$\sum\limits_{n=1}^{\infty}\lambda_n\mathbf{\overline{e}}_n$
converges in the strong operator topology in $\mc{L}_{\diag}(X)$.
\end{claim}
\begin{proof}[\bf Proof of the claim.]
Suppose first that the series
$\sum\limits_{n=1}^{\infty}\lambda_ne_n^*$ is $w^*$ convergent. We
choose $M>0$ such that $\|\sum\limits_{i=n}^m\lambda_ie_i^*\|\le
M$ for all $n\le m$. We consider an arbitrary $x\in X$,
$x=\sum\limits_{i=1}^{\infty}\mu_ie_i$, and we shall show that the
sequence
$\big((\sum\limits_{i=1}^{n}\lambda_i\mathbf{\overline{e}}_i)(x)\big)_{n\in\N}$
\big(i.e. the sequence
$(\sum\limits_{i=1}^n\lambda_i\mu_ie_i)_{n\in\N}$\big) is a Cauchy
sequence in $X$. Let $\e>0$. We choose $n_0\in\N$ such that
$\|\sum\limits_{i=n_0}^{\infty}\mu_ie_i\|<\frac{\e}{MC_2}$. Let
now any $m\ge n\ge n_0$. We select $z^*\in B_{X^*}$, with
$z^*=\sum\limits_{i=1}^{\infty}\nu_ie_i^*$ as a $w^*$ series, such
that
 $\|\sum\limits_{i=n}^m\lambda_i\mu_ie_i\|=z^*(\sum\limits_{i=n}^m\lambda_i\mu_ie_i)
=\sum\limits_{i=n}^m\lambda_i\mu_i\nu_i$. We set
$f=\sum\limits_{i=n}^m\lambda_i\nu_ie_i^*$. From our assumption
(2)(b) we get that $\|f\|\le
C_2\cdot\|\sum\limits_{i=n}^m\lambda_ie_i^*\|\cdot\|\sum\limits_{i=n}^m\nu_ie_i^*\|
\le C_2M \|z^*\|\le C_2 M$. Thus
 $\|(\sum\limits_{i=n}^m\lambda_i\mathbf{\overline{e}}_i)(x)\|
 =\|\sum\limits_{i=n}^m\lambda_i\mu_ie_i\|
 =\sum\limits_{i=n}^m\lambda_i\mu_i\nu_i
 =f(\sum\limits_{i=n}^m \mu_ie_i)
 \le\|f\|\cdot\|\sum\limits_{i=n}^m \mu_i e_i\|
 <C_2M\frac{\e}{MC_2}=\e$.

 Conversely, suppose that the series
 $\sum\limits_{n=1}^{\infty}\lambda_n\mathbf{\overline{e}}_n$ converges
 in the strong operator topology; we shall prove that the series
  $\sum\limits_{n=1}^{\infty}\lambda_n e_n^*$ is $w^*$ convergent.
  Let $x\in X$, $x=\sum\limits_{i=1}^{\infty}\mu_ie_i$. We shall show that
  the sequence $\big( (\sum\limits_{i=1}^{n}\lambda_i  e_i^*)(x)\big)_{n\in\N}$
  \big(i.e. the sequence $(\sum\limits_{i=1}^n\lambda_i\mu_i)_{n\in \N}$ \big) is a Cauchy sequence
  in $\R$.  Let $\e>0$. From our assumption that the series
  $\sum\limits_{n=1}^{\infty}\lambda_n\mathbf{\overline{e}}_n$ is
 SOT-convergent it follows that the
  sequence $\big((\sum\limits_{i=1}^{n}\lambda_i\mathbf{\overline{e}}_i)(x)\big)_{n\in\N}$
 converges in norm. Thus
 we may choose $n_0$ such that
 $\|(\sum\limits_{i=n}^m\lambda_i\mathbf{\overline{e}}_i)(x)\|<C_1\e$ for
 every $m\ge n\ge n_0$.
 From assumption (2)(a) we get that
 $|\sum\limits_{i=n}^m\lambda_i\mu_i|\le
 \frac{1}{C_1}\|\sum\limits_{i=n}^m\lambda_i\mu_ie_i\|
 =\frac{1}{C_1}\|(\sum\limits_{i=n}^m\lambda_i\mathbf{\overline{e}}_i)(x)\|<\frac{1}{C_1}C_1\e=\e$.
 This completes the proof of the claim.
 \end{proof}
   From the first part of the claim it follows that the operator
   $\Phi:X^*\to\mc{L}_{\diag}(X)$ is well defined.
   Taking into account that for $T\in \mc{L}_{\diag}(X)$,
   if $T(e_n)=\lambda_ne_n$, $n=1,2,\ldots$ then
   $T=SOT-\sum\limits_{n=1}^{\infty}\lambda_n\mathbf{\overline{e}}_n$,
    the second part of the claim entails that
   the operator $\Phi$ is onto $\mc{L}_{\diag}(X)$.
   We shall show that
   $C_1\cdot\|\sum\limits_{i=1}^{\infty}\lambda_ie_i^*\|
   \le \|\sum\limits_{i=1}^{\infty}\lambda_i\mathbf{\overline{e}}_i\|
   \le C_2\cdot \|\sum\limits_{i=1}^{\infty}\lambda_ie_i^*\|$ for
   every $x^*=w^*-\sum\limits_{i=1}^{\infty}\lambda_ie_i^*\in X^*$.
 From now on we fix a functional $x^*=w^*-\sum\limits_{i=1}^{\infty}\lambda_ie_i^*\in X^*$.

 From (2)(a) we get that
 \begin{eqnarray*}
  \|\sum\limits_{i=1}^{\infty}\lambda_i\mathbf{\overline{e}}_i\| & = &
   \sup\{\|(\sum\limits_{i=1}^{\infty}\lambda_i\mathbf{\overline{e}}_i)(\sum\limits_{i=1}^{\infty}\mu_ie_i)\|:\;
    \sum\limits_{i=1}^{\infty}\mu_ie_i\in B_X\}\\
   & = &   \sup\{\|\sum\limits_{i=1}^{n}\lambda_i\mu_ie_i\|:\;
    \sum\limits_{i=1}^{\infty}\mu_ie_i\in B_X,\;n\in\N\}\\
  & \ge & \sup\{C_1\cdot |\sum\limits_{i=1}^n\lambda_i\mu_i|:\;
   \sum\limits_{i=1}^{\infty}\mu_ie_i\in B_X,\;n\in\N\}\\
  & = &   C_1 \cdot \sup\{ |(\sum\limits_{i=1}^{\infty}\lambda_ie_i^*)(\sum\limits_{i=1}^n\mu_ie_i)|:\;
     \sum\limits_{i=1}^{\infty}\mu_ie_i\in B_X,\;n\in\N\}\\
  & = &   C_1 \cdot \|\sum\limits_{i=1}^{\infty}\lambda_ie_i^*\|.
 \end{eqnarray*}

 We finally show that
  $\|\sum\limits_{i=1}^{\infty}\lambda_i\mathbf{\overline{e}}_i\|
   \le C_2\cdot \|\sum\limits_{i=1}^{\infty}\lambda_ie_i^*\|$.
     Let $\e>0$. We
 select a finitely supported vector $x\in B_X$,
 $x=\sum\limits_{i=1}^n\mu_ie_i$, such that
 $\|\sum\limits_{i=1}^{\infty}\lambda_i\mathbf{\overline{e}}_i\|\le
 (1+\e)\|(\sum\limits_{i=1}^{\infty}\lambda_i\mathbf{\overline{e}}_i)(x)\|
 =(1+\e)\|\sum\limits_{i=1}^n\lambda_i\mu_ie_i\|$.
 We choose $z^*=w^*-\sum\limits_{i=1}^{\infty}\nu_ie_i^*\in B_{X^*}$
  such that
  $\|\sum\limits_{i=1}^n\lambda_i\mu_ie_i\| =z^*(\sum\limits_{i=1}^n\lambda_i\mu_ie_i)
  =\sum\limits_{i=1}^n\lambda_i\mu_i\nu_i$.
  We set $f=\sum\limits_{i=1}^n\lambda_i\nu_ie_i^*$.
 From our assumption (2)(b) we get that
 $\|f\|\le C_2
 \cdot\|\sum\limits_{i=1}^n\lambda_ie_i^*\|\cdot\|\sum\limits_{i=1}^n\nu_ie_i^*\|
 \le C_2 \cdot\|\sum\limits_{i=1}^{\infty}\lambda_ie_i^*\|$.
 Therefore
 \begin{eqnarray*}
  \|\sum\limits_{i=1}^{\infty}\lambda_i\mathbf{\overline{e}}_i\|
  & \le & (1+\e)\cdot \|\sum\limits_{i=1}^n \lambda_i\mu_i e_i\|
        =  (1+\e)\cdot  \sum\limits_{i=1}^n\lambda_i\mu_i\nu_i \\
        & = & (1+\e)\cdot f(x) \le  (1+\e)\cdot \|f\|\cdot\|x\|
  \le  (1+\e)\cdot C_2\cdot \|\sum\limits_{i=1}^{\infty}\lambda_ie_i^*\|.
 \end{eqnarray*}
 Since this happens for every $\e>0$ we conclude that
 $\|\sum\limits_{i=1}^{\infty}\lambda_i\mathbf{\overline{e}}_i\| \le
 C_2 \cdot \|\sum\limits_{i=1}^{\infty}\lambda_ie_i^*\|$ and this finishes the proof of the theorem.
\end{proof}

 \begin{remark}\label{sa12}
 Usually a $K\subset c_{00}(\N)$ is considered and the space $X$ is defined as the completion
 of the normed space $(c_{00}(\N),\|\;\|_K)$. If for such a $K$ it holds that
  $K\cdot K\subset K+\cdots +K$ ($m$ summands) then condition (3)(b) is satisfied for $C_2=m$.
 \end{remark}

 \begin{remark}\label{Nrem0007}
  One can easily prove that under the conditions of Theorem \ref{th1},  for every choice of
  scalars $(a_i)_{i\in\N}$, the series $\sum\limits_{i=1}^{\infty}a_ie_i^*$ converges in norm
  in $X^*$
  if and only if the series $\sum\limits_{i=1}^{\infty}a_i\mathbf{\overline{e}}_i$
  converges in norm in $\mc{L}_{\diag}(X)$. Since the latter means that
  the operator $\sum\limits_{i=1}^{\infty}a_i\mathbf{\overline{e}}_i$  is
  compact, we get that under the conditions of Theorem \ref{th1},
  the   space $\mc{K}_{\diag}(X)$ of compact diagonal
  operators of $X$ is naturally identified with the subspace of
  $X^*$
   norm generated by the
  biorthogonal functionals $(e_n^*)_{n\in\N}$.
 \end{remark}

\begin{example}
  The summing
 basis \seq{s}{n} of the
  Banach space $c(\N)$ of all convergent sequences is monotone while
  the  set $K=\big\{\pm\sum\limits_{i=1}^ns_i^*:\; n\in\N\big\}$
  is a norming set of the space $c(\N)$
   satisfying conditions (3)(a), (3)(b) of Theorem
 \ref{th1} with constants $C_1=1$ and $C_2=1$.
   Therefore, Theorem \ref{th1}
  implies that the space of all diagonal operators of the space $c(\N)$
  with respect to the summing basis,
  is isometric to $c(\N)^*$, which is isometric to $\ell_1(\N)$.
  \end{example}

 \begin{remark}
 It follows readily that if the space $X$ has an unconditional
 basis \seq{e}{n}, then, denoting by $(e_n^*)_{n\in\N}$ the
 corresponding biorthogonal functionals,  the following holds.
 For every $f=w^*-\sum\limits_{n=1}^{\infty}a_ne_n^*$,
 $g=w^*-\sum\limits_{n=1}^{\infty}\beta_ne_n^*$ in $X^*$ we have
 that
 \[ \|w^*-\sum\limits_{n=1}^{\infty}a_n\beta_ne_n^*\|\le C\cdot
    \|w^*-\sum\limits_{n=1}^{\infty}a_ne_n^*\|\cdot
    \|w^*-\sum\limits_{n=1}^{\infty}\beta_ne_n^*\|
    \]
    where $C$ is a constant which depends on the unconditional basis constant of \seq{e}{n}.
Therefore the dual of a space with an unconditional basis is a
Banach algebra.

On the other hand, it is clear that if the space $X$ has an
unconditional basis \seq{e}{n} and it satisfies condition (2)(a)
of Theorem \ref{th1}, then $X$ is isomorphic to $\ell_1(\N)$.
 Thus, although for every space $X$ with an unconditional basis
 \seq{e}{n} it holds that $\mc{L}_{\diag}(X,\seq{e}{n})$ is
 isomorphic to $\ell_{\infty}(\N)$, the only space for which this
 fact follows as a consequence of Theorem \ref{th1}, is $\ell_1(\N)$.
 \end{remark}

 \begin{theorem}
 Let $Z$ be a Banach space with an unconditional subsymmetric
 Schauder basis. Then there exists a Banach space $X$ with a
 Schauder basis \seq{e}{n} such that
  $\mc{L}_{\diag}(X,\seq{e}{n})$ has a complemented subspace
  isomorphic to $Z^*$.
 \end{theorem}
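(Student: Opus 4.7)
The plan is to apply Theorem \ref{th1}: I would construct a Banach space $X$ with a Schauder basis $(e_n)$ satisfying conditions $(2)(a)$ and $(2)(b)$ of that theorem, and exhibit $Z^*$ as a complemented subspace of $X^*$; then by Theorem \ref{th1}, $\mc{L}_{\diag}(X,(e_n))\cong X^*$ contains $Z^*$ as a complemented subspace. Let $(z_n)$ denote the normalized $1$-unconditional subsymmetric basis of $Z$; these two properties together imply $(z_n)$ is in fact symmetric and $\|z_n^*\|_{Z^*}=1$ for every $n$. Define the norming set
\[
K = \Bigl\{\pm\sum_{j=1}^{k} a_j\chi_{I_j} : k\in\N,\; I_1<I_2<\cdots<I_k\text{ are intervals of }\N,\; \Bigl\|\sum_{j=1}^{k} a_j z_j^*\Bigr\|_{Z^*}\le 1\Bigr\},
\]
and let $X$ be the completion of $c_{00}(\N)$ under $\|x\|_X = \sup_{f\in K}|f(x)|$, with $(e_n)$ the standard unit vector basis.

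To invoke Theorem \ref{th1}, I would verify condition $(2)(a)$ (basis dominates summing with constant $1$), which is immediate from $\chi_{[1,N]}\in K$ (take $k=1,a_1=1$); and condition $(2)(b)$ (dual submultiplicativity). For the latter, by Remark \ref{sa12} it suffices to check $K\cdot K\subset C_2\cdot(K+\cdots+K)$. The pointwise product of $\sum_j a_j\chi_{I_j}$ and $\sum_l b_l\chi_{J_l}$ equals $\sum_{(j,l)\in S} a_jb_l\chi_{I_j\cap J_l}$, where $S$ indexes the nonempty intersections; a short lexicographic argument shows these intersections are automatically ordered as consecutive intervals, while the resulting coefficient sequence has $Z^*$-norm bounded by $C\|\sum a_j z_j^*\|_{Z^*}\|\sum b_l z_l^*\|_{Z^*}$ using the Banach algebra property of $Z^*$ (recorded in the remark preceding Theorem \ref{th1}, via unconditionality of $(z_n^*)$) and the symmetric structure permitting reindexing. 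Applying Theorem \ref{th1} gives $X^*\cong\mc{L}_{\diag}(X,(e_n))$. Next, I would fix a sequence of pairwise disjoint intervals $I_1<I_2<\cdots$ of $\N$ and define the embedding $\phi:Z^*\to X^*$ by $\phi(\sum_n a_n z_n^*) = \sum_n a_n\chi_{I_n}$; since this functional lies in $K$, $\|\phi\|\le 1$. For the projection $P:X^*\to\phi(Z^*)$, I would choose, for each $n$, a vector $u_n \in X$ supported in $I_n$ with $\chi_{I_n}(u_n)=1$ and set $P(\psi)=\phi\bigl(\sum_n \psi(u_n)z_n^*\bigr)$; the identity $P\circ\phi=\mathrm{id}_{Z^*}$ is immediate from $\phi(\sum a_n z_n^*)(u_n)=a_n$.

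The main obstacle is to choose the intervals $I_n$ and the vectors $u_n$ so that simultaneously $\phi$ is bounded below and $P$ is bounded; both reduce to the estimate $\|\sum_n b_n u_n\|_X \leq C\|\sum_n b_n z_n\|_Z$ holding uniformly for all finitely supported scalar sequences $(b_n)$. The difficulty is that the supremum defining $\|\cdot\|_X$ ranges over arbitrary interval partitions, and a large interval $J$ may cover several of the $u_n$'s and produce a block-sum $\sum_{n\in T} b_n$ which, for generic symmetric bases $(z_n)$, is not controlled by $\|\sum b_n z_n\|_Z$ (the case $Z=\ell_p$ with $p>1$ being illustrative). Overcoming this requires a careful, rapidly spaced choice of the intervals $I_n$ together with oscillating-sign vectors $u_n$ supported in $I_n$ (for instance, $u_n$ with coefficients of summable absolute value whose partial sums on every initial segment of $\N$ are bounded by a universal constant), so that the problematic block-sums split into controllable partial sums. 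The symmetric/subsymmetric structure of $(z_n^*)$ then yields the required uniform bound, completing the proof that $\phi(Z^*)\cong Z^*$ is complemented in $X^*$.
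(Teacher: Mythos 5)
Your construction of $X$ is exactly the paper's (the Jamesification $J_Z$ with the same norming set $K$), and your overall strategy via Theorem \ref{th1} is also the paper's; but the proof has a genuine gap at precisely the point you yourself flag as ``the main obstacle''. The whole content of the complementation step is the uniform estimate $\|\sum_n b_n u_n\|_X\le C\|\sum_n b_n z_n\|_Z$, and you do not prove it: you only describe a hoped-for choice (``rapidly spaced'' intervals, oscillating signs, controllable block-sums) and assert that subsymmetry ``then yields the required uniform bound''. That assertion is the theorem's real work. In the paper this estimate is the Claim that $w_n=e_{2n-1}-e_{2n}$ (so $I_n=\{2n-1,2n\}$ and no rapid spacing is needed) is $2$-equivalent to $(z_n)$: given $f=\sum_i a_i\chi_{I_i}\in K$, one discards the intervals with odd $\min$ and even $\max$ (they annihilate every $w_k$) and splits the remaining ones according to the parities of their endpoints into the sets $A_0,A_1,A_2$, obtaining $f(\sum_k\lambda_k w_k)=(\sum(-a_i)z_{p_i}^*)(\sum\lambda_kz_k)+(\sum a_iz_{q_i}^*)(\sum\lambda_kz_k)\le 2\|\sum_k\lambda_kz_k\|_Z$, where $1$-unconditionality and subsymmetry are used to see that each of the two reindexed functionals has $Z^*$-norm at most $1$. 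Until you supply an argument of this kind, your $\phi$ being bounded below and your $P$ being bounded are unproven, so $Z^*$ has not been exhibited as complemented in $X^*$. (Granting that estimate, your route — building the embedding and projection directly on the dual side via $\phi$ and $\Lambda(\psi)=\sum_n\psi(u_n)z_n^*$ — is a legitimate dualization of the paper's argument, which instead shows $Z\simeq\overline{\spann}\{w_n\}$ is complemented in $X$ and then dualizes.)

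Two further points need repair. First, your claim that an unconditional subsymmetric basis is automatically symmetric is false (Garling's space gives a subsymmetric, non-symmetric basis); fortunately only increasing reindexings are needed, for which subsymmetry suffices, so drop the word ``symmetric''. Second, your verification of $K\cdot K$ via ``the Banach algebra property of $Z^*$'' is not the right tool: the product $\sum_{(j,l)\in S}a_jb_l\chi_{I_j\cap J_l}$ involves off-diagonal coefficients $a_jb_l$ in which a single $j$ (or $l$) may recur, so submultiplicativity of the diagonal product $\sum a_nb_nz_n^*$ does not apply directly. The correct argument is the paper's minimal-index split: writing the product as $h_1+h_2$, where $h_1$ collects, for each $j$, the intersection with the minimal $i$ meeting $E_j$ and $h_2$ collects the (at most one per $i$) remaining intersections, each of $h_1,h_2$ lies in $K$ by $1$-unconditionality and subsymmetry (coefficients of modulus $\le 1$ times a sequence of $Z^*$-norm $\le 1$), giving $K\cdot K\subset K+K$ and hence $C_2=2$ via Remark \ref{sa12}.
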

 \begin{proof}[\bf Proof.]
 Passing to an equivalent norm, we may assume that the space
 $Z$ has a normalized, bimonotone basis \seq{z}{n} which
 is 1-unconditional and subsymmetric. This means that, given
 $z=\sum\limits_{i=1}^d\lambda_iz_i$,
 for any increasing sequence $(k_i)_{i=1}^d$ we have that
 $\|\sum\limits_{i=1}^d\lambda_iz_{k_i}\|=\|z\|$ and that for every
 choice of scalars $(\mu_i)_{i=1}^d$ with $|\mu_i|\le |\lambda_i|$
 we have that $\|\sum\limits_{i=1}^d \mu_i z_i\|\le \|z\|$.
 The same properties remain valid for the sequence of biorthogonal
 functionals $(z_n^*)_{n\in\N}$.

 The space $X$ is defined to be the Jamesification $J_Z$ of the space
 $Z$ (see also \cite{BHO}).
 Namely, setting
 \begin{eqnarray*}
 K & = & \big\{ \sum\limits_{i=1}^d a_i\chi_{I_i}:\;
 I_1<I_2<\cdots<I_d\mbox{ are finite intervals,}\\
  & & \;\qquad\qquad\qquad\qquad   \|\sum\limits_{i=1}^da_iz_i^*\|_{Z^*}\le 1,\; d\in\N\big\}
 \end{eqnarray*}
 the space $X=J_Z$ is the completion of $(c_{00}(\N),\|\;\|_K)$.
 The Hamel basis \seq{e}{n} of $c_{00}(\N)$ becomes a normalized
 bimonotome Schauder basis for $X$, while the norming set $K$ of
 $X$ obviously satisfies condition (3)(a) of Theorem \ref{th1}
 with constant $C_1=1$.

 We next show that $K\cdot K\subset K+K$. Fix $f=\sum\limits_{i=1}^d a_i\chi_{I_i}$,
 $g=\sum\limits_{j=1}^{d'} \beta_j\chi_{E_j}$  in $K$.
 Without loss of generality we may assume that each $I_i$
 intersects some $E_j$ and and each $E_j$ intersects some $I_i$.
 For each $j=1,\ldots,d'$, let $i_j$ be the minimum $i$ for which
 $I_i\cap E_j\neq\emptyset$.
 Using  that $\|\sum\limits_{j=1}^{d'}\beta_jz_j^*\|\le
 1$, $|a_i|\le 1$ for each $i$,  the 1-unconditionality and the
 subsymmetricity of the basis \seq{z}{n}, we get that the
 functional
 $h_1=\sum\limits_{j=1}^da_{i_j}\beta_j \chi_{I_{i_j}\cap E_j}$ belongs to the set $K$.
 Observe that for each $i$, there exists at most
 one $j$ such that $I_i\cap E_j\neq\emptyset$ and $i\neq i_j$. Let $A$ be the set of
 all $i$ for which such a $j$ exists and  denote this $j$ by
 $j_i$. Using that $\|\sum\limits_{i=1}^{d}a_iz_i^*\|\le 1$,
 $|b_j|\le 1$ for each $j$,  the 1-unconditionality and the
 subsymmetricity of the basis \seq{z}{n}, we derive that
 the functional
 $h_2=\sum\limits_{i\in A}a_i b_{j_i}\chi_{I_i\cap E_{j_i}}$ also
 belongs to the set $K$.
 Hence the functional $f\cdot g=h_1+h_2$ belongs to $K+K$, therefore
 (see Remark \ref{sa12}) condition (3)(b) of Theorem \ref{th1} is satisfied with constant
 $C_2=2$.

   Theorem \ref{th1} entails that the space of diagonal operators
  $\mc{L}_{\diag}(X,\seq{e}{n})$ is isomorphic to $X^*$. It remains to
  show that the dual space $Z^*$ is isomorphic to a complemented
  subspace of $X^*$.

  We define $w_n=e_{2n-1}-e_{2n}$ for $n=1,2,\ldots$.
   \begin{claim}
   The basic sequence \seq{w}{n} of $X$ is 2-equivalent to the basis
  \seq{z}{n} of $Z$, i.e. for every sequence of scalars $(\lambda_k)_{k=1}^d$
  \begin{equation}\label{eq7}\|\sum\limits_{k=1}^d\lambda_kz_k\|_Z
   \le  \|\sum\limits_{k=1}^d\lambda_kw_k\|_X\le
    2  \|\sum\limits_{k=1}^d\lambda_kz_k\|_Z
  \end{equation}
   \end{claim}
     \begin{proof}[\bf Proof of the claim]
  Let $z=\sum\limits_{k=1}^d\lambda_kz_k$ be a finitely supported vector
  in $Z$. We choose a functional $z^*=\sum\limits_{k=1}^d a_kz_k^*\in B_{Z^*}$
  such that $z^*(z)=\|z\|_Z$. Then the functional
  $f=\sum\limits_{k=1}^da_ke_{2k-1}^*$ belongs to $K$ and thus
  \[\|\sum\limits_{k=1}^d\lambda_kw_k\|_X\ge f(\sum\limits_{k=1}^d\lambda_kw_k)
  =\sum\limits_{k=1}^d a_k\lambda_k=z^*(z)=
  \|\sum\limits_{k=1}^d\lambda_kz_k\|_Z.\]

 Next we prove the inequality in the right side of \eqref{eq7}.
  Let $f=\sum\limits_{i=1}^{d'} a_i\chi_{I_i}$ be an arbitrary
  functional in $K$.  Observe that if $\min I_i$ is odd and $\max
  I_i$ is even then $\chi_{I_i}(w_k)=0$ for all $k$.
  We set
  \begin{eqnarray*}
  A_{0}&=&\{i:\; \min I_i\mbox{ is even and }\max I_i
  \mbox{ is odd}\}\\
  A_{1}&=&\{i:\; \min I_i\mbox{ is even and }\max I_i
  \mbox{ is even}\}\\
  A_{2}&=&\{i:\; \min I_i\mbox{ is odd and }\max I_i
  \mbox{ is odd}\}
  \end{eqnarray*}
 For $i\in A_0\cup A_1$ let $\min I_i=2p_i$ and for
 $i\in A_0\cup A_2$ let $\max I_i=2q_i-1$. It follows that
 \begin{eqnarray*}
 f(\sum\limits_{k=1}^d\lambda_kw_k)& = &
   \sum\limits_{i\in A_0}a_i(-\lambda_{p_i}+\lambda_{q_i})
  +\sum\limits_{i\in A_1}a_i(-\lambda_{p_i})
  +\sum\limits_{i\in A_2}a_i \lambda_{q_i}\\
  & = & \sum\limits_{i\in A_0\cup A_1}(-a_i)\lambda_{p_i}+
        \sum\limits_{i\in A_0\cup A_2}a_i\lambda_{q_i}\\
 & = &
 (\sum\limits_{i\in A_0\cup A_1}(-a_i)z_{p_i}^*)(\sum\limits_{k=1}^d\lambda_kz_k)+
 (\sum\limits_{i\in A_0\cup A_2}
 a_iz_{q_i}^*)(\sum\limits_{k=1}^d\lambda_kz_k)\\
 & \le & 2\|\sum\limits_{k=1}^d\lambda_kz_k\|_Z
  \end{eqnarray*}
  where we have used the 1-unconditionality and the
  subsymmetricity of the basis and the fact that
  $\|\sum\limits_{i=1}^d a_iz_i^*\|\le 1$.
 Thus it follows that $\|\sum\limits_{k=1}^d\lambda_kw_k\|_X\le
              2\|\sum\limits_{k=1}^d\lambda_kz_k\|_Z$.
 \end{proof}
 It follows from the claim that the space $Z$ is isomorphic to the subspace
 $W=\overline{\spann}\{w_n:\;n\in\N\}$ of $X$.
 We claim that $W$ is a complemented subspace of $X$. Indeed,
 let $P:X\to W$ with
 $P(\sum\limits_{n=1}^{\infty}\lambda_ne_n)=\sum\limits_{n=1}^{\infty}\lambda_{2n-1}w_n$.
 Then, for any choice of scalars $(\lambda_n)_{n=1}^{2d}$ we have that
 \[ \|\sum\limits_{n=1}^{d}\lambda_{2n-1}w_n\|_X \le
 2\|\sum\limits_{n=1}^d\lambda_{2n-1}z_n\|_Z\le
 2\|\sum\limits_{n=1}^{2d}\lambda_ne_n\|_X.\]
 Thus $\|P\|\le 2$, while, since obviously $P(w_n)=w_n$ for all $n$,
 $P$ is a projection onto $W$.

 Therefore $Z^*$, being isomorphic to $W^*$, is isomorphic to a
 complemented subspace of $X^*$. Since $\mc{L}_{\diag}(X,\seq{e}{n})$ is isomorphic to
 $X^*$, we conclude that the space of diagonal operators $\mc{L}_{\diag}(X,\seq{e}{n})$ has a
 complemented subspace isomorphic to $Z^*$.
  \end{proof}

 \begin{theorem}
 There exists  a Banach space $X$ with a Schauder basis \seq{e}{n}
 such that its space of diagonal operators $\mc{L}_{\diag}(X)$ is
 nonseparable and does not contain $c_0(\N)$ or $\ell_1(\N)$.
 \end{theorem}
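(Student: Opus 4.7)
The plan is to construct $X$ as a Jamesification over an infinite tree, combining the construction of Theorem \ref{thB} with the segment structure of James's tree space $JT$. Let $T$ be an infinite branching tree, and fix an enumeration $\varphi\colon\N\to T$ for which every initial segment $\{1,\dots,n\}$ of $\N$ corresponds via $\varphi$ to a single tree-segment of $T$ (for instance, take $T$ to be a tree with $\N$ as a distinguished branch to which finite side-subtrees are attached at each level, enumerated by descending along the main branch and interleaving the side-subtrees). Identifying $\N$ with $T$ via $\varphi$, define
\begin{equation*}
K = \Bigl\{\sum_{i=1}^d a_i\chi_{s_i}:\;s_1,\dots,s_d\text{ pairwise disjoint segments of }T,\;\Bigl\|\sum_{i=1}^d a_iz_i^*\Bigr\|_{\ell_2}\le 1\Bigr\},
\end{equation*}
and let $X$ be the completion of $(c_{00}(\N),\|\cdot\|_K)$ with its natural basis $(e_n)$.

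The first step is to verify the hypotheses of Theorem \ref{th1} for this $K$. Condition (3)(a) holds with $C_1=1$, since the enumeration $\varphi$ makes each $\chi_{\{1,\dots,n\}}$ a single-segment functional whose coefficient vector has $\ell_2$-norm one. Condition (3)(b) follows from the same combinatorial argument as in the proof of Theorem \ref{thB}: the pointwise product of two elements of $K$ is a linear combination of characteristic functions of segment-intersections (which are themselves segments), and partitioning these into two families according to which of the two factors "enters first" yields $K\cdot K\subset K+K$. Theorem \ref{th1} then gives that $X^*$ and $\mc{L}_{\diag}(X)$ are isomorphic Banach spaces, so it suffices to establish the desired structural properties for $X^*$.

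The nonseparability of $X^*$ is witnessed by branch functionals $x_B^*=w^*\text{-}\lim_n\chi_{B_n}$, where $B$ is an infinite branch of $T$ and $B_n$ is its initial segment of length $n$; each such weak-$*$ limit exists because $\chi_{B_n}\in K$ and the sequence is uniformly bounded. Two distinct branches $B\ne B'$ split at some finite depth, and evaluation against a basis vector lying on $B$ past the splitting point separates $x_B^*$ from $x_{B'}^*$ by at least $1$. Since $T$ has continuum many branches, this produces an uncountable $1$-separated subset of $X^*$. For the non-containment of $c_0(\N)$ and $\ell_1(\N)$: the $\ell_2$-coefficient norm forces $X$ to be hereditarily $\ell_2$ via the standard James tree argument, so $X$ contains neither $c_0$ nor $\ell_1$; then an adaptation of the classical tree-pruning and sliding-hump analysis used for $JT^*$, transported to the present norming set, shows the same for $X^*$.

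The chief obstacle is the last step, ruling out $c_0$ and $\ell_1$ in $X^*$: the closure property $K\cdot K\subset K+K$ imposed by the algebra structure enlarges $K$ slightly beyond the standard James tree norming set, so the classical dual analysis has to be rechecked in this modified setting. A secondary difficulty lies in Step 1: the enumeration $\varphi$ must be designed carefully so that initial segments of $\N$ are genuine tree-segments of $T$, and this is precisely the "slight modification" of the James tree space alluded to in the introduction.
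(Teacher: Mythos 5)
There is a genuine gap, and it sits exactly at the point you flag as a ``secondary difficulty'': the enumeration $\varphi$ you require cannot exist for any branching tree. A segment of a tree is by definition a chain (a set of consecutive nodes along one branch), so if every initial interval $\{1,\dots,n\}$ of $\N$ were a single tree-segment, then every initial interval would be totally ordered in the tree order, hence all of $T$ would be a chain, i.e.\ a tree with exactly one branch. Your proposed example (a main branch with finite side-subtrees interleaved into the enumeration) fails for precisely this reason: as soon as an initial interval picks up a side-subtree node together with later nodes of the main branch, it contains incomparable nodes and is no longer a segment. With a single branch the construction degenerates (essentially to James's quasi-reflexive space), the branch-functional argument produces no uncountable separated family, and nonseparability of $X^*$ is lost. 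Nor can you drop the requirement and keep your $K$ as stated: if $\mathscr{S}$ consists of tree-segments only, then writing $\chi_{\{1,\dots,n\}}$ as a sum of disjoint segments with coefficients $1$ forces the $\ell_2$-coefficient norm to be $\sqrt{d}$ where $d$ is the number of segments needed, and in a branching tree this number is unbounded in $n$ (initial lexicographic intervals contain whole antichain levels), so condition (3)(a) of Theorem \ref{th1} fails.

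The paper resolves this by a different device, which is the actual ``slight modification'' of $JT$: keep the standard bijection of $\N$ with the dyadic tree, but enlarge the admissible family to $\mathscr{S}=\{\text{finite intervals of }\N\}\cup\{\text{finite segments of the dyadic tree}\}$, and take $K=\{\sum_i a_i\chi_{A_i}:A_i\in\mathscr{S}\text{ pairwise disjoint},\ \sum_i a_i^2\le 1\}$. Then $\pm\chi_{\{1,\dots,n\}}\in K$ automatically (it is an interval, not required to be a segment), giving (3)(a) with $C_1=1$; and since $\mathscr{S}$ is closed under intersections (the lexicographic order agrees with the tree order along any branch, so an interval meets a segment in a segment), the product of $f=\sum_i a_i\chi_{A_i}$ and $g=\sum_j \beta_j\chi_{B_j}$ is $\sum_{i,j}a_i\beta_j\chi_{A_i\cap B_j}$ with pairwise disjoint $A_i\cap B_j\in\mathscr{S}$ and $\sum_{i,j}a_i^2\beta_j^2\le 1$, so in fact $K\cdot K\subset K$ and (3)(b) holds with $C_2=1$ --- no two-family ``enters first'' decomposition as in Theorem \ref{thB} is needed. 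Your remaining steps (branch functionals for nonseparability, and the $JT$-style analysis giving no copy of $c_0(\N)$ or $\ell_1(\N)$ in the dual, via $X^{**}\cong X\oplus\ell_2(c)$) match the paper's in spirit, but they only become available once the norming set is set up so that both conditions of Theorem \ref{th1} actually hold; as written, your construction does not get off the ground.
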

 \begin{proof}[\bf Proof.]
 The space $X=JT_I$ that we define, is a variant of the classical James Tree
 space $JT$. We consider the dyadic tree $(\mc{D},\preceq)$
 with its standard interpretation as the set of all finite
 sequences of 0's and 1's with  partial order
 defined by the relation $a\preceq \beta$ iff $a$ is an initial
 segment of $\beta$.
 The lexicographical order of $\mc{D}$ is defined through the
 bijection $h:\mc{D}\to\N$ defined by the rule $h(\emptyset)=1$,
 $h(\e_1\ldots\e_n)=2^n+\sum\limits_{j=1}^n\e_j2^{n-j}$.
 Thus the set $\N$ of the natural numbers is endowed with a
 partial order $\preceq$ such that $(\N,\preceq)$ coincides with
 the dyadic tree, while the natural order of $\N$
 coincides with the lexicographical order of $\mc{D}$.
 Thus a subset $A$ of $\N$ is called a segment of the dyadic tree
 if the set $h^{-1}(A)$ is a segment of $\mc{D}$.
 We set \begin{eqnarray*}
 \mathscr{S}& = & \{A:\;A \mbox{ is a finite interval of
 }\N\}\cup  \\& &
  \qquad\qquad \{A:\; A \mbox{ is a finite segment of the dyadic
 tree}\}.
 \end{eqnarray*}
 We observe that for $A,B\in\mathscr{S}$ we have that $A\cap B\in
 \mathscr{S}$.

 We define the  subset $K$ of $c_{00}(\N)$ as
 \[K=\big\{\sum\limits_{i=1}^d a_i\chi_{A_i}:\;
 (A_i)_{i=1}^d\mbox{ are pairwise disjoint members of
 }\mathscr{S},\;\sum\limits_{i=1}^d a_i^2\le 1,\;d\in\N\big\}. \]
 The space $JT_I$ is the completion of the normed space
 $(c_{00}(\N),\|\cdot\|_{K})$.
 The standard Hamel
 basis \seq{e}{n} of $c_{00}(\N)$ is a normalized bimonotone Schauder basis of $JT_I$.
 Since clearly $\pm\chi_{\{1,\ldots,n\}}\in K$ for all $n$,
 condition (3)(a) of Theorem  \ref{th1} is satisfied  with constant $C_1=1$.
 Also, if $f=\sum\limits_{i=1}^da_i\chi_{A_i}$ and
 $g=\sum\limits_{j=1}^{d'}\beta_j\chi_{B_j}$ belong to $K$, then we have that
 $f\cdot g=
 \sum\limits_{i=1}^d\sum\limits_{j=1}^{d'}a_i\beta_j\chi_{A_i\cap
 B_j}$ with $A_i\cap B_j\in\mathscr{S}$ and
 $\sum\limits_{i=1}^d\sum\limits_{j=1}^{d'}
 a_i^2\beta_j^2\le1$, hence $f\cdot g\in K$. Thus
 condition (3)(b) of Theorem  \ref{th1} is satisfied  with constant $C_2=1$.
 Therefore Theorem \ref{th1}
  entails that the algebra $\mc{L}_{\diag}(JT_I)$, of all diagonal operators of the space $JT_I$
  with respect to the  basis \seq{e}{n}, is isometric to
  $JT_I^*$.
 Similarly to the proof for the classical James Tree space $JT$ (see e.g. \cite{J}, \cite{LS}),
 it is proved that $JT_I$ contains no isomorphic copy of $\ell_1(\N)$,
 hence $JT_I^*$ contains no isomorphic copy of $c_0(\N)$, while
 $JT_I^*$ is nonseparable and
 $(JT_I)^{**}$ is isomorphic to $JT_I \oplus \ell_2( c)$.
 The later implies that  $JT_I^*$ does not contain $\ell_1(\N)$.
 Therefore
 the  space of diagonal operators $\mc{L}_{\diag}(JT_I)$
 is nonseparable and does not contain $c_0(\N)$ or $\ell_1(\N)$.
  \end{proof}

\section{Definition of the space $\eqs_D$}
 The content of the present section is the definition of the space $\eqs_D$ that we shall
 study in the subsequent sections. The novelty of the definition of
 the space $\eqs_D$ (compared to earlier HI constructions)
  is that, in each inductive  step of the definition of its norming
  set $D$,
 we close under pointwise products, in order to obtain a set $D$
 satisfying condition (3)(b) of Theorem \ref{th1}. This forces the dual space $\eqs_D^*$
 to be a Banach  algebra. We also  include in
 the norming set $D$ of the space $\eqs_D$, the functionals $\pm\chi_I$ for all finite intervals $I$,
 in order to satisfy condition (3)(a) of Theorem \ref{th1}.

  \begin{definition}[The space $\eqs_D$]\label{D13}
 We  fix two sequences of integers \seq{m}{j}, \seq{n}{j},
 as follows:
 \begin{itemize}
 \item     $m_1=2\;\;$  and $\;\;m_{j}=m_{j-1}^5$ for $j>1$.
 \item    $n_1=4\;\;$ and
 $n_{j}>m_{j}^{4\log_2(m_j)+2}\cdot Q_{j}^{2\log_2(m_j)}\qquad$ for $j>1,$ \\where
 $\;\;Q_{j}= \big(n_{j-1}\cdot \log_2(m_{j})\big)^{\log_2(m_{j})}$
 \end{itemize}

 Let
  $\Q_s$ denote the set of all finite
 sequences $(\phi_1,\phi_2,\ldots,\phi_d)$ such that
 $\phi_i\in \co$, $\phi_i\neq 0$ with $\phi_i(n)\in \Q$ for all $i,n$ and
 $\phi_1<\phi_2<\cdots<\phi_d$.
 We fix a pair $\Omega_1,\Omega_2$ of disjoint infinite subsets of
 $\N$.
 From the fact that $\Q_s$ is
 countable we are able to define a Gowers-Maurey type injective
 coding function
 $\sigma:\Q_s\to \{2j:\;j\in\Omega_2\}$ such that
 $m_{\sigma(\phi_1,\phi_2,\ldots,\phi_d)}>
 \max\{\frac{1}{|\phi_i(e_l)|}:\;l\in\supp
 \phi_i,\;i=1,\ldots,d\}\cdot\max\supp \phi_d$.

 We shall inductively define a triple $(K_n,M_n,D_n)$ of subsets of $c_{00}(\N)$,
  $n=0,1,2,\ldots$
 and $K_n^j$, $j=0,1,2,\ldots$ with
 $K_n=\bigcup\limits_{j=0}^{\infty}K_n^j$.

 We first define $G=\{\pm\chi_I:\; I \mbox{ is a finite interval of
 }\N\}$.
 We set $K_0^0=G$, $K_0^j=\emptyset$, $j=1,2,\ldots$ and  $K_0=G$,
 $M_0=G$, $D_0=\conv_{\Q}(G)$.

 Suppose that the sets $(K_n^j)_{j=0}^{\infty}$, $M_n$, $D_n$ have been
 defined. We set $K_{n+1}^0=G$ and for $j=1,2,\ldots$ we define
 \[K_{n+1}^{2j}=\Big\{\frac{1}{m_{2j}}\sum\limits_{i=1}^d f_i:\;
 f_i\in D_n,\; f_1<\cdots<f_d,\; d\le n_{2j}\Big\}\cup K_n^{2j}\]
 and
 \begin{eqnarray*}
 K_{n+1}^{2j-1} &= &\Big\{\pm E(\frac{1}{m_{2j-1}}
 \sum\limits_{i=1}^{n_{2j-1}} f_i):\;
 f_1 \in K_n^{2j_1} \mbox{ for some }j_1\in\Omega_1   \\
   & & \mbox{ with }
  m_{2j_1}^{1/2}>n_{2j-1},
  \; f_{i+1}\in K_n^{\sigma(f_1,\ldots,f_i)},\\
  & &
  f_1<f_2<\cdots<f_{n_{2j-1}}, \\
  & &
  E\; \mbox{ is an interval of }\N
  \Big\}\cup K_n^{2j-1}.
  \end{eqnarray*}
 We set
 \begin{eqnarray*}
 K_{n+1} &= &\bigcup\limits_{j=0}^{\infty}K_{n+1}^j,\\
 M_{n+1} &= & \{f_1\cdot \;\ldots\; \cdot f_d:\; f_i \in
 K_{n+1},\;i=1,\ldots,d,\; d\in\N\}\\
 D_{n+1} &=& \conv_{\Q}(M_{n+1}).
  \end{eqnarray*}

 The inductive construction has been completed. We finally set
 $K^{j}=\bigcup\limits_{n=0}^{\infty}K_n^j$ for $j=1,2\ldots$ and
 $K=\bigcup\limits_{n=0}^{\infty}K_n$,
 $M=\bigcup\limits_{n=0}^{\infty}M_n$,
 $D=\bigcup\limits_{n=0}^{\infty}D_n$.

 The Banach space $\eqs_D$ is the completion of the normed space
 $(c_{00}(\N),\|\cdot\|_D)$.
 \end{definition}

 \begin{remark}\label{R23}
  The norming set $D$ of the space $\eqs_D$ is closed under  pointwise products.
 Indeed, since $K_n\subset K_{n+1}$ and $K_n\subset M_n\subset D_n$ for all $n$, in order
  to show the former it
  is enough to show that
  each $D_n$ is closed under  pointwise products.
  Let $f,g\in D_n$. Then $f=\sum\limits_{i=1}^d\lambda_i f_i$,
  $g=\sum\limits_{j=1}^{d'}\mu_jg_j$ as convex combinations, with $(f_i)_{i=1}^d$ and $(g_j)_{j=1}^{d'}$
  in $M_n$. The pointwise product $f\cdot g$ takes the form
  $f\cdot g=\sum\limits_{i=1}^d\sum\limits_{j=1}^{d'}\lambda_i \mu_j \;f_i\cdot g_j$ as a convex combination of
  the family of functionals $(f_i\cdot g_j)_{i=1,j=1}^{d,\quad d'}$ with each $f_i\cdot g_j$ belonging
  to $M_n$. Therefore $f\cdot g\in D_n\subset D$.

   It follows that  $\|f\cdot g\|\le \|f\|\cdot\|g\|$  for every $f,g\in \eqs_D^*$,
   therefore the space $\eqs_D^*$ is a Banach algebra.
 \end{remark}

 \begin{remark}\label{R6}
 The set $D$ is the minimal subset of $c_{00}(\N)$ satisfying the
 following properties:
 \begin{enumerate}
 \item[(i)] $G\subset D$, i.e. the set $D$ contains $\pm\chi_I$
  for any finite interval $I$ of $\N$.
 \item[(ii)] $D$ is closed under the
 $(\mc{A}_{n_{2j}},\frac{1}{m_{2j}})_j$
 operations.
 \item[(iii)] For each $j$, the set  $D$ is closed under the
 $(\mc{A}_{n_{2j-1}},\frac{1}{m_{2j-1}})$ operation on $2j-1$
 special sequences (see  Definition \ref{D8} below).
 \item[(iv)] $D$ is closed  under the restriction of its
 elements to  intervals of $\N.$
 \item[(v)] $D$ is rationally convex.
 \item[(vi)] $D$ is closed under  pointwise products.
 \end{enumerate}
 \end{remark}

  \begin{definition}\label{D8}
  A  block sequence $(f_i)_{i=1}^{n_{2j-1}}$ is said to be
  a $2j-1$ special sequence if $f_1\in K^{2j_1}$ for some $j_1\in
  \Omega_1$ with $m_{2j_1}^{1/2}>n_{2j-1}$ and $f_{i+1}\in
  K^{\sigma(f_1,\ldots,f_i)}$ for $1\le i<n_{2j-1}$.
  \end{definition}

  \begin{remark}\label{R3} The
  sequence \seq{e}{n} is clearly a normalized bimonotone
 Schauder basis of the space $\eqs_D$.
  From the fact that the
  norming set $D$ is closed under the $(\mc{A}_{n_{2j}},\frac{1}{m_{2j}})_j$
  operations, and taking into account that $\lim\limits_j \frac{m_j}{n_j}=0$,
   it also follows that the basis  \seq{e}{n} is  boundedly complete.
   Hence the space
 $(\eqs_D)_*=\overline{\spann}\{e_n^*:\;n\in\N\}$ is a predual of the
 space $\eqs_D$. Notice also that, as a consequence of the fact
 that the norming set $D$ is rationally convex, the set $D$
   is pointwise dense
 in the unit ball $\mbox{B}_{\eqs_D^*}$ of the dual space. Since the
 set $D$ is closed under the
 $(\mc{A}_{n_{2j}},\frac{1}{m_{2j}})_{j\in\N}$ operations, we
 get that the  unit ball $\mbox{B}_{\eqs_D^*}$ shares the same
 property, i.e.  if $j\in\N$, $d\le n_{2j}$ and $f_1<f_2<\cdots<
 f_d$ with $\|f_i\|\le 1$ then
 $\|\frac{1}{m_{2j}}\sum\limits_{i=1}^df_i\|\le 1$.
 \end{remark}

 \begin{remark}
 Each $f\in D$ has one of the following forms:
 \begin{enumerate}
 \item[(a)] $f=\pm \chi_I$, $I$ an interval of $\N$
 (i.e. $f\in G$). We set $w(f)=1$.
 \item[(b)] $f=\frac{1}{m_{j}}\sum\limits_{i=1}^d f_i$ with
 $f_1<\cdots <f_d$, $d\le n_j$, $f_i\in D$ (i.e. $f\in K^j\subset K$).
 In this case we set  $w(f)=m_j$.
 \item[(c)] The functional $f\in M$ is the pointwise product
  $f=f_1\cdot\ldots\cdot f_d$ with each
 $f_i\in K$. In this case we define
 $w(f)=w(f_1)\cdot\ldots\cdot w(f_d)$.
 \item[(d)] $f$ is a rational convex combination
 $f=\sum\limits_{i=1}^d \lambda_if_i$ with each $f_i$ belonging
 to cases (a), (b), (c).
 \end{enumerate}
 \end{remark}

  We next show, that a functional $f\in M$
  (i.e. a pointwise product), can be written as $f=\frac{1}{w(f)}\sum h_i$
  with $(h_i)_i$ being a sequence of successive functionals belonging to the norming set
  $D$, of length determined by $w(f)$.

  \begin{lemma}\label{L20}
 If $(I_i)_{i=1}^d$, $(J_j)_{j=1}^{d'}$ is any pair of families of
 successive intervals of $\N$ (i.e. $I_1<\cdots<I_d$ and
 $J_1<\cdots<J_{d'}$) then the cardinality of the nonempty sets of
 the family $\{I_i\cap J_j,\; 1\le i\le d,\; 1\le j\le d'\}$ is at
 most $d+d'-1$.
 \end{lemma}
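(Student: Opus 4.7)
The plan is to prove this by a direct counting argument based on right endpoints of the intersections.

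First I observe that since $I_1<\cdots<I_d$ means $\max I_i<\min I_{i+1}$, the $I_i$ are pairwise disjoint, and likewise for the $J_j$. Consequently, the intersections $I_i\cap J_j$ over distinct pairs $(i,j)$ are themselves pairwise disjoint intervals of $\N$.

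I would enumerate the nonempty intersections in left-to-right order as $E_1,\ldots,E_N$, writing $E_k=I_{i_k}\cap J_{j_k}$. Because the intersections are disjoint intervals, their right endpoints $r_k:=\max E_k$ are strictly increasing, and $r_k=\min(\max I_{i_k},\max J_{j_k})$; in particular $r_k\in\{\max I_{i_k},\max J_{j_k}\}$.

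The key observation is that for every $k<N$ at least one of the inequalities $i_k<d$ or $j_k<d'$ must hold (whichever side realizes the minimum). Indeed, if $r_k=\max I_{i_k}$ with $i_k=d$, then every point to the right of $r_k$ lies outside $\bigcup_i I_i$, so no further intersection can exist, forcing $k=N$; the symmetric statement holds for $J$. Therefore each $r_k$ with $k<N$ belongs to the set
\[
V=\{\max I_i:1\le i\le d-1\}\cup\{\max J_j:1\le j\le d'-1\},
\]
which has cardinality at most $(d-1)+(d'-1)=d+d'-2$. Since the $r_k$ are distinct, we get $N-1\le d+d'-2$, i.e.\ $N\le d+d'-1$, which is the required bound.

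The argument is elementary; the only point needing genuine care is the key observation that $r_k=\max I_d$ (resp.\ $r_k=\max J_{d'}$) forces $k=N$, and this rests precisely on the hypothesis that the two families are successive. An equally clean alternative would be induction on $d+d'$, splitting on whether $\max I_1\le\max J_1$ or not and removing whichever interval is "used up" first; but the endpoint-counting proof above is more transparent and I would prefer it.
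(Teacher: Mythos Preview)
Your proof is correct. The paper proves the lemma by induction on $d+d'$: assuming $\min J_{d'}\le\min I_d$, it applies the inductive hypothesis to $(I_i)_{i=1}^{d-1}$ and $(J_j)_{j=1}^{d'}$, then notes that $I_d$ can meet only $J_{d'}$, picking up at most one extra nonempty intersection. You instead give a direct counting argument, assigning to each nonempty intersection its right endpoint and observing that for all but the last one this endpoint must be $\max I_i$ with $i<d$ or $\max J_j$ with $j<d'$; distinctness of endpoints then gives the bound. The two arguments are equally elementary, and in fact you mention the inductive route as an alternative. Your endpoint-counting version has the mild advantage of being non-inductive and of exhibiting explicitly \emph{why} the bound is $d+d'-1$ (one endpoint per ``used up'' interval on either side), whereas the paper's induction is perhaps the more routine write-up.
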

 \begin{proof}[\bf Proof.]
 We proceed by induction on the sum $d+d'$. For $d+d'=2$, i.e. if
 $d=d'=1$ there is nothing to be proved. Let  $k\ge 2$ and suppose
 that the lemma is true for $d+d'\le k$. We prove the result for
 $d+d'=k+1$. If $d=1$ or $d'=1$ then the result is straightforward, so
 we assume that $d>1$ and $d'>1$. Without loss of generality we
 may assume that $\min J_{d'}\le \min I_d$. From our inductive
 assumption the family $\{I_i\cap J_j,\; 1\le i \le d-1,\; 1\le j\le
 d'\}$ has at most $(d-1)+d'-1$ nonempty sets. Since
 $\min J_{d'}\le \min I_d$ we get that $I_d\cap J_j=\emptyset$ for
 $j=1,\ldots,d'-1$. Thus the only set between $J_1,\ldots , J_{d'}$
 that may
 intersect  $I_d$ is $J_{d'}$. Therefore the nonempty sets of the
 family $\{I_i\cap J_j,\; 1\le i\le d,\; 1\le j\le d'\}$ are at
 most $[(d-1)+d'-1]+1$ i.e. at most $d+d'-1$.
 \end{proof}

 \begin{proposition}\label{C1}
 Let $f\in M$, $f=f_1\cdot\ldots\cdot f_r$ with $f_i\in K$,
 $w(f_i)=m_{j_i}$, $i=1,\ldots,r$. Then the functional $f$ takes
 the form $f=\frac{1}{w(f)}\sum\limits_{i=1}^d h_i$ where $h_i\in
 D$, $h_1<\cdots<h_d$ and $d\le n_{j_1}+\cdots+n_{j_r}-(r-1)$.
 Moreover, if $f\in M_{n+1}$, then we may select each $h_i$ to belong to $D_n$.
 \end{proposition}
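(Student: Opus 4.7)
The plan is to expand the $r$-fold pointwise product using the canonical $(\mc{A}_{n_{j_i}},\frac{1}{m_{j_i}})$ representation of each factor and then bound the number of nonzero summands by iterating Lemma \ref{L20}.

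First I would write each $f_i\in K^{j_i}$ in the form $f_i=\frac{\varepsilon_i}{m_{j_i}}\sum_{\ell=1}^{d_i}g_{i,\ell}$ where $\varepsilon_i\in\{+1,-1\}$, $(g_{i,\ell})_{\ell=1}^{d_i}$ is successive in $D$, and $d_i\le n_{j_i}$. For $f_i$ arising from $K^{2j-1}$ one absorbs the outer sign and the interval projection $E$ into the summands; the resulting pieces still lie in $D$ because $D$ is closed under pointwise products and contains the characteristic functions of finite intervals (Remark \ref{R23}). Under the additional hypothesis $f\in M_{n+1}$, the analogous closure of $D_n$ under products lets us take each $g_{i,\ell}\in D_n$. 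Expanding gives
\[
 f=\frac{\varepsilon_1\cdots\varepsilon_r}{w(f)}\sum_{\vec\ell}h_{\vec\ell},\qquad h_{\vec\ell}:=g_{1,\ell_1}\cdot g_{2,\ell_2}\cdots g_{r,\ell_r},
\]
where $\vec\ell=(\ell_1,\ldots,\ell_r)$ ranges over $\prod_{i=1}^r\{1,\ldots,d_i\}$. By Remark \ref{R23} each $h_{\vec\ell}$ lies in $D$ (and in $D_n$ in the moreover case), and the overall sign is absorbed into the surviving summands.

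Next I would verify that the nonzero $h_{\vec\ell}$ are pairwise $<$-comparable by the standard first-difference argument: if $i^*$ is the least index where $\vec\ell\ne\vec{\ell'}$ and $\ell_{i^*}<\ell'_{i^*}$, then $\supp h_{\vec\ell}\subseteq\supp g_{i^*,\ell_{i^*}}$ which is strictly less than $\supp g_{i^*,\ell'_{i^*}}\supseteq\supp h_{\vec{\ell'}}$, so $h_{\vec\ell}<h_{\vec{\ell'}}$. This lets us relist the nonzero products as $h_1<h_2<\cdots<h_d$.

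The counting is the only real obstacle. I would prove by induction on $r$ the following generalization of Lemma \ref{L20}: for $r$ successive finite families of intervals of $\N$ with cardinalities $d_1,\ldots,d_r$, the number of $r$-tuples whose total intersection is nonempty is at most $\sum_{i=1}^r d_i-(r-1)$. The inductive step uses that the nonempty $(r-1)$-fold intersections are themselves intervals forming a successive family (again by the first-difference argument), whose cardinality is bounded by induction, and then applies Lemma \ref{L20} to this family together with the $r$-th family. Applying this bound to the interval hulls $[\min\supp g_{i,\ell_i},\max\supp g_{i,\ell_i}]$, whose $r$-fold intersection contains $\supp h_{\vec\ell}$, yields $d\le\sum_{i=1}^r d_i-(r-1)\le\sum_{i=1}^r n_{j_i}-(r-1)$, as required.
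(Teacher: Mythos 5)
Your proof is correct and follows essentially the same route as the paper: the paper obtains the same bound by induction on the number of factors, multiplying in one factor at a time and applying Lemma \ref{L20} to the ranges of the accumulated pieces and of the pieces of the new factor, which is precisely the content of the inductive step of your $r$-family generalization of Lemma \ref{L20}. The only difference is organizational --- you expand all factors at once and isolate the counting in a standalone combinatorial lemma, whereas the paper interleaves the counting with the algebraic rewriting of the partial products.
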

 \begin{proof}[\bf Proof.]
 Let $f\in M_{n+1}$. Then
 $f=f_1\cdot\ldots\cdot   f_{r}$ with each $f_i\in K_{n+1}$. We shall prove,
 by induction on $k$, that each product $f_1\cdot\ldots\cdot   f_{k}$, for $1\le k\le r$,
 takes the desired form. For $k=1$ there is nothing to
 be proved. Let $k<r$ and suppose  that
 $f_1\cdot\ldots\cdot f_k=\frac{1}{w(f_1)\cdot\ldots\cdot w(f_k)}
 \sum\limits_{l=1}^{d'} H_l$ with $H_l\in D_n$,
 $H_1<\cdots<H_{d'}$ and $d'\le n_{j_1}+\cdots+n_{j_k}-(k-1)$.
  Let also $f_{k+1}=\frac{1}{m_{j_{k+1}}}(f^{k+1}_1+\cdots+f^{k+1}_m)$,
 $m\le n_{j_{k+1}}$, with each $f^{k+1}_j\in D_n$.
 Applying  Lemma \ref{L20} to the families
 $(\ran H_l)_{l=1}^{d'}$ and  $(\ran f^{k+1}_j)_{j=1}^{m}$ we get
 that $\ran H_l\cap \ran f^{k+1}_j\neq \emptyset$ for at most
 $n_{j_1}+\cdots+n_{j_{k+1}}-k$ pairs $(l,j)$.
  Taking into account
 that the  set $D_n$ is closed under  pointwise
 products (see Remark \ref{R23}) we get that
 \[f_1\cdot\ldots\cdot  f_k\cdot  f_{k+1}=\frac{1}{w(f_1)\cdot\ldots\cdot
 w(f_k)} \frac{1}{m_{j_{k+1}}}(\sum\limits_{l=1}^{d'}H_l)
 (\sum\limits_{j=1}^{m}f^{k+1}_j)
 =\frac{1}{w}\sum\limits_{i=1}^d h_i\]
 where $w=w(f_1)\cdot \ldots \cdot w(f_{k})\cdot m_{j_{k+1}}=w(f_1\cdot\ldots \cdot f_k\cdot f_{k+1})$,
   $h_1<\cdots<h_d$ with each $h_i\in D_n$,
  and $d\le n_{j_1}+\cdots+n_{j_{k+1}}-k$.
 This completes the proof of the inductive step and the proof of the proposition.
 \end{proof}

 \begin{corollary}\label{C2}
 Let $f\in M$  with $w(f)<m_j$.
  Then the functional $f$ can be written in the form
 $f=\frac{1}{w(f)}\sum\limits_{i=1}^d h_i$ with $h_i\in
 D$, $h_1<\cdots<h_d$ and $d< n_{j-1}\log_2(m_j)$.
 \end{corollary}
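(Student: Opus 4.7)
The corollary will follow directly from Proposition~\ref{C1} combined with the fast growth of the sequence $(m_j)$. First I would reduce to the setting of the proposition by peeling off the trivial factors of $f$. Writing $f=f_1\cdot\ldots\cdot f_r$ with $f_i\in K$, I group the $G$-factors (those with $w(f_i)=1$, i.e.\ restrictions $\pm\chi_{I_i}$) into a single $\pm\chi_I\in G$, and let $g_1,\ldots,g_s$ be the remaining factors with weights $w(g_k)=m_{j_k}$, so that $w(f)=m_{j_1}\cdots m_{j_s}$. Applying Proposition~\ref{C1} to $g_1\cdot\ldots\cdot g_s$ yields $g_1\cdot\ldots\cdot g_s=\frac{1}{w(f)}\sum_{i=1}^{d'}H_i$ with $H_i\in D$ successive and $d'\le n_{j_1}+\cdots+n_{j_s}-(s-1)$. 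Multiplying by $\pm\chi_I$ amounts to restricting each $H_i$ to $I$; by property (iv) of Remark~\ref{R6} the nonzero restrictions $h_i:=\pm IH_i$ are again successive elements of $D$, at most $d'$ in number, giving $f=\frac{1}{w(f)}\sum_{i=1}^d h_i$ with $d\le d'$.

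Next I would exploit the growth conditions on $(m_j)$ and $(n_j)$. Since $m_1=2$ and $(m_j)$ is strictly increasing, every $m_{j_k}\ge 2$, so $w(f)=m_{j_1}\cdots m_{j_s}\ge 2^s$; combined with the hypothesis $w(f)<m_j$ this forces $s<\log_2(m_j)$. Moreover $m_{j_k}<m_j$ gives $j_k\le j-1$, and since $(n_j)$ is increasing, $n_{j_k}\le n_{j-1}$. Therefore
\[ d\le d'\le n_{j_1}+\cdots+n_{j_s}-(s-1)\le s\,n_{j-1}<n_{j-1}\log_2(m_j), \]
which is exactly the claimed bound.

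No serious obstacle is expected: the argument is essentially a one-line combinatorial estimate on top of Proposition~\ref{C1}. The only mild point of care is that Proposition~\ref{C1} is stated for products all of whose factors carry a weight of the form $m_{j_i}$, so the $G$-factors of $f$ must be separated out and reabsorbed as a final restriction before the proposition is invoked.
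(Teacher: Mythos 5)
Your proof is correct and takes essentially the same route as the paper: reduce to Proposition \ref{C1}, then bound the number of weighted factors by $\log_2(m_j)$ via $2^s\le w(f)<m_j$ and each $n_{j_k}$ by $n_{j-1}$. The only difference is that you explicitly peel off the weight-one factors from $G$ and reabsorb them as a restriction to an interval (using closedness of $D$ under restrictions), a degenerate case the paper's formulation silently assumes away, so your version is if anything slightly more careful.
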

 \begin{proof}[\bf Proof.]
 Let $f=f_1\cdot\ldots\cdot f_k$ with $f_i\in K$,
 $w(f_i)=m_{j_i}$, $i=1,\ldots,k$. Then $m_j>w(f)=w(f_1)\cdot
 \ldots\cdot w(f_k)\ge 2^k$ and hence $k<\log_2(m_j)$.

  Since $j_i\le j-1$ for each $i$, from Proposition \ref{C1} the
  functional $f$ takes the form
  $f=\frac{1}{w(f)}\sum\limits_{i=1}^d h_i$
  with $d\le n_{j_1}+\ldots+n_{j_k}-(k-1)<n_{j-1}\log_2(m_j)$.
  \end{proof}

  \begin{corollary}\label{C5}
 Let $f\in M$,  $f=f_1\cdot\ldots\cdot f_k$ with $f_i\in K$,
 such that   $w(f_i)<m_j$ for $i=1,\ldots,k$ and $w(f)<m_j^2$.
  Then the functional $f$ takes the form
 $f=\frac{1}{w(f)}\sum\limits_{i=1}^d h_i$ with $h_i\in
 D$, $h_1<\cdots<h_d$ and $d< n_{j-1}\log_2(m_j^2)$.
 \end{corollary}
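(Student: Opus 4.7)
The plan is to mimic the proof of Corollary \ref{C2} essentially verbatim, adjusting the bound on the number of factors $k$ by one. Write $f = f_1 \cdot \ldots \cdot f_k$ with $f_i \in K$ and $w(f_i) = m_{j_i}$. The hypothesis $w(f_i) < m_j$ gives $j_i \le j-1$, hence $n_{j_i} \le n_{j-1}$ for each $i$.

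Next, I would bound $k$. Since each $w(f_i) = m_{j_i} \ge m_1 = 2$, one has
\[
2^k \le m_{j_1} \cdot \ldots \cdot m_{j_k} = w(f) < m_j^2,
\]
so $k < \log_2(m_j^2) = 2\log_2(m_j)$.

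Now I would invoke Proposition \ref{C1} to write $f = \frac{1}{w(f)} \sum_{i=1}^{d} h_i$ with $h_i \in D$, $h_1 < \cdots < h_d$, and
\[
d \le n_{j_1} + \cdots + n_{j_k} - (k-1) \le k \cdot n_{j-1} < 2\log_2(m_j) \cdot n_{j-1} = n_{j-1}\log_2(m_j^2),
\]
which is the required estimate.

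There is no real obstacle here; the only substantive change from Corollary \ref{C2} is replacing the bound $w(f) < m_j$ by $w(f) < m_j^2$, which merely doubles the logarithmic factor controlling the number of pointwise factors. Both Proposition \ref{C1} and the elementary inequality $m_{j_i} \ge 2$ do all the work.
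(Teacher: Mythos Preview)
Your argument is correct and is exactly the approach the paper intends: the paper's own proof simply reads ``almost identical to that of Corollary~\ref{C2}, so we omit it,'' and your write-up supplies precisely those details, using the extra hypothesis $w(f_i)<m_j$ to get $j_i\le j-1$ (hence $n_{j_i}\le n_{j-1}$) and the bound $w(f)<m_j^2$ to get $k<\log_2(m_j^2)$, then feeding both into Proposition~\ref{C1}.
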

 \begin{proof}[\bf Proof.]
 The proof is almost identical to that of Corollary \ref{C2}, so we omit it.
 %
  \end{proof}

 \begin{definition}\label{D4}
For  $f\in D$ we call tree of $f$ (or tree corresponding to the
analysis of $f$) a family of functionals $T_f=(f_a)_{a\in \mc{A}}$
indexed by  a finite tree $\mc{A}$, with each $f_a\in D$, such
that the following conditions are fulfilled:

 \begin{enumerate}
 \item[(i)] The tree $\mc{A}$ has a unique root $0\in\mc{A}$ and
 $f_0=f$.
 \item[(ii)] If $a$ is a maximal element of the tree $\mc{A}$ then
 $f_a\in G$. In this case we say that $f_a$ is of type 0 with
 weight  $w(f_a)=1$.
 \item[(iii)] For every non-maximal $a\in\mc{A}$, denoting by
 $S_a$ the set of immediate successors of $a$ in the tree
 $\mc{A}$, $S_a=\{\beta_1,\cdots,\beta_d\}$, one of the following
 holds:
 \begin{enumerate}
 \item[(a)] $f_{\beta_1}<\cdots<f_{\beta_d}$
 and  $f=\frac{1}{w(f_a)}\sum\limits_{i=1}^df_{\beta_i}$
 where $w(f_a)=m_{j_1}\cdot \ldots\cdot m_{j_r}$ and
 $d\le n_{j_1}+\cdots+n_{j_r}$. In this case we say that
 $f_a$ is of type I with weight $w(f_a)$.
 \item[(b)]  There exists a family $(\lambda_{\beta_i})_{i=1}^d$ of
 positive rationals with $\sum\limits_{i=1}^d\lambda_{\beta_i}=1$ such
 that $f_a=\sum\limits_{i=1}^d\lambda_{\beta_i}f_{\beta_i}$ and for each
 $i$, $\ran f_{\beta_i}\subset \ran f_a$ and $f_{\beta_i}$
 is either of type I or of type 0.
 In this case we say that $f_a$ is of type II.
\end{enumerate}
 \end{enumerate}
 \end{definition}

 \begin{remark}\label{R1}
  Every $f\in D$ admits a tree (not necessarily unique). Indeed, it can be shown
  that each $f\in D_n$ admits a tree, using induction on $n$ and applying Proposition
  \ref{C1} in each inductive step.
  \end{remark}

 \begin{proposition}\label{P6}
 The Banach algebra $\mc{L}_{\diag}(\eqs_D)$ of diagonal operators
 of $\eqs_D$ with respect to the basis $\seq{e}{n}$, is isometric
 to the dual space $\eqs_D^*$.
 \end{proposition}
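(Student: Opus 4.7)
The plan is to derive this proposition as a direct corollary of Theorem \ref{th1} applied with norming set $K = D$ and constants $C_1 = C_2 = 1$. The hypotheses of Theorem \ref{th1} on the basis are met: by Remark \ref{R3}, $\seq{e}{n}$ is a normalized bimonotone Schauder basis of $\eqs_D$, and $D$ is a $1$-norming set contained in $\spann\{e_n^* : n \in \N\}$ since $\eqs_D$ is by construction the completion of $(c_{00}(\N), \|\cdot\|_D)$.

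It then remains to verify conditions (3)(a) and (3)(b) of Theorem \ref{th1} for the set $D$ with $C_1 = C_2 = 1$. For (3)(a), I would simply observe that for every $n \in \N$, the interval $I = \{1,\ldots,n\}$ gives $\pm \chi_I = \pm \sum_{i=1}^n e_i^* \in G \subset D$, where $G$ is the ground set used at stage $n=0$ of the inductive definition of $D$. For (3)(b), I would invoke Remark \ref{R23}, which asserts precisely that $D$ is closed under pointwise products; in particular $D \cdot D \subset D \subset B_{\eqs_D^*}$, so (3)(b) holds with $C_2 = 1$.

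Once both conditions are checked, Theorem \ref{th1} yields that the map $\Phi : \eqs_D^* \to \mc{L}_{\diag}(\eqs_D)$ defined by
\[
 w^*\!-\!\sum_{n=1}^\infty \lambda_n e_n^* \;\longmapsto\; SOT\!-\!\sum_{n=1}^\infty \lambda_n \mathbf{\overline{e}}_n
\]
is well defined, onto, and satisfies
\[
 \Big\|\sum_{n=1}^\infty \lambda_n e_n^*\Big\| \;\le\; \Big\|\sum_{n=1}^\infty \lambda_n \mathbf{\overline{e}}_n\Big\| \;\le\; \Big\|\sum_{n=1}^\infty \lambda_n e_n^*\Big\|,
\]
which is exactly the desired isometry between $\eqs_D^*$ and $\mc{L}_{\diag}(\eqs_D)$.

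There is essentially no obstacle here: the real work was done in building $D$ so that it would be closed under pointwise products while still generating an HI norm. The proposition is the payoff of that design choice, combined with the abstract equivalence established in Theorem \ref{th1}.
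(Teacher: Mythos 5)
Your proposal is correct and coincides with the paper's own argument: the paper likewise verifies conditions (3)(a) and (3)(b) of Theorem \ref{th1} with $C_1=C_2=1$, using $G\subset D$ for (3)(a) and the closure of $D$ under pointwise products (Remark \ref{R23}) for (3)(b), and then invokes Theorem \ref{th1}. Nothing essential is missing.
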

 \begin{proof}[\bf Proof.]
  Since the norming set $D$ of the space $\eqs_D$ contains the
 characteristic functions $\pm\chi_{\{1,\ldots,n\}}$ for every $n$
 (this is the reason we have included the set $G$ in the norming
 set $D$) i.e. $\pm \sum\limits_{i=1}^ne_i^*\in D$
 for every $n$, condition (3)(a) of  Theorem \ref{th1}
 is satisfied with constant $C_1=1$.
  The norming set
 $D$ is closed under  pointwise products, i.e.
  $D\cdot D\subset D$,
 hence condition (3)(b) of  Theorem \ref{th1}
 is also satisfied with constant $C_2=1$.
 Theorem \ref{th1} entails that the space of diagonal operators
 $\mc{L}_{\diag}(\eqs_D)$ is isometric to  the dual space $\eqs_D^*$.
 \end{proof}

 \section{The basic inequality and exact pairs}
This section is mainly devoted to the statement and the proof of
the  basic inequality (Proposition \ref{P1}). For this we follow
the standard method which has been used in earlier works (i.e.
\cite{AAT}, \cite{ADT}, \cite{ALT2}, \cite{AM}, \cite{AT2}). The
general scheme for this method goes as follows. We first define an
auxiliary space which is a mixed Tsirelson space and the basic
inequality shows that the action of every $f\in D$ on an average
of a RIS is dominated by the action of a functional $g$ on the
corresponding average of the basis of the auxiliary space plus a
small error. This provides, among others, the upper estimate of
the norm of  averages of a RIS.  We  show that the space $\eqs_D$
is of codimension 1 in $\eqs_D^{**}$ hence is quasireflexive.
Finally, we define the exact pairs, a key ingredient for the
definition of dependent sequences.

 \begin{definition}\label{D6}
 Let $C\ge 1$, $\e>0$. A block sequence
 $(x_k)_{k}$ in $\eqs_D$ is said to be a $(C,\e)$
 Rapidly Increasing Sequence (RIS) if there exists
 a strictly increasing sequence $(j_k)_k$ of integers
  such that the
following conditions are satisfied:
 \begin{enumerate}
  \item[(i)] $\|x_k\|\le C$ and $\|x_k\|_G\le \e$ for each $k$.
  \item[(ii)] $\frac{1}{m_{j_1}}\le \e$
   and $\#\supp(x_k)\cdot \frac{1}{m_{j_{k+1}}}\le \e$ for all
   $k$.
   \item[(iii)] For every $k$ and $f\in D$ with $w(f)<m_{j_k}$ it holds
 that $|f(x_k)|\le \frac{C}{w(f)}$.
 \end{enumerate}
 We call the sequence of integers  $(j_k)_k$, the associated sequence of the RIS $(x_k)_k$.
 \end{definition}

 \begin{definition}[The auxiliary space]
 Let $W$ be the minimal subset of $c_{00}(\N)$ such that
 \begin{enumerate}
 \item[(i)] It contains $\pm e_n^*$, $n\in\N$.
 \item[(ii)] It is closed under the $(\mc{A}_{2n_j},\frac{1}{m_j})$
  operation for every $j$.
 \item[(iii)] It is rationally convex.
 \end{enumerate}
 We also define $W'$ as the minimal subset of $c_{00}(\N)$
 satisfying the above conditions (i), (ii).
 \end{definition}

 \begin{remark}\label{R4}
 It is easily seen that a subset of $c_{00}(\N)$ which is closed
 under the $(\mc{A}_n,\theta)$ and  $(\mc{A}_{n'},\theta')$
 operations, it is also closed under the the
 $(\mc{A}_{nn'},\theta\theta')$ operation. It follows that the set
 $W$ is closed under the
 $(\mc{A}_{(2n_{j_1})\cdot\ldots\cdot(2n_{j_k})}
 ,\frac{1}{m_{j_1}\cdot\ldots\cdot m_{j_k}})$
  operation, for every $j_1,\ldots,j_k\in\N$ (not
 necessarily distinct). Since
 $\sum\limits_{i=1}^k2n_{j_i}\le\prod\limits_{i=1}^k2n_{j_i}$ we get
 that the set $W$ (and  the set $W'$ also) is closed under the
 $(\mc{A}_{2n_{j_1}+\cdots+2n_{j_k}},\frac{1}{m_{j_1}\cdot\ldots\cdot
 m_{j_k}})$ operation.
 \end{remark}

  \begin{remark}\label{remsam1}
 The trees for functionals $g\in W$, are defined in a similar
 manner as the corresponding ones for $g\in D$ (Definition \ref{D4}),
 the only difference being that the
 functionals corresponding to maximal elements are of the form
 $\pm e_r^*$. For $f\in W'$ the trees are defined as those for
 $g\in W$, the only difference being that we
 require that no functionals of type II appear.
 \end{remark}

 \begin{lemma}\label{L22}
 Let $g\in W$ with a tree $(g_a)_{a\in \mc{A}}$.  Then the
 functional $g$ is  a rational convex combination $g=\sum\limits_{i\in
 I}\lambda^ig^i$,
   such that for each $i\in I$,  $g^i\in
 W'$,
 the functional $g^i$ has a tree $(g^i_a)_{a\in\mc{A}^i}$
 and there exists an order preserving map $\Phi^i:\mc{A}^i\to
 \mc{A}$ satisfying the following.
 \begin{enumerate}
  \item[(i)] For every maximal node $a\in\mc{A}^i$, $\Phi^i(a)$ is
  a maximal node of $\mc{A}$ and
 $g^i_a=g_{\Phi^i(a)}$.
 \item[(ii)] For every  non-maximal $a\in\mc{A}^i$, the
 functionals
  $g^i_a$, $g_{\Phi^i(a)}$ are of type I with
  $w(g^i_a)=w(g_{\Phi^i(a)})$
  and  $\# S^i_a=\# S_{\Phi^i(a)}$, where
  $S^i_a$ denotes the set of immediate
 successors of  $a\in\mc{A}^i$ and  $S_\gamma$ is the set of immediate
 successors of a $\gamma\in \mc{A}$.
 \item[(iii)] If the functional $g$ is weighted then $w(g^i)=w(g)$.
 \end{enumerate}
 \end{lemma}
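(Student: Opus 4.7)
The plan is to prove the lemma by induction on the cardinality $\#\mc{A}$ of the indexing tree. For the base case $\#\mc{A}=1$, the root is maximal so $g=g_0=\pm e_r^*$ is of type $0$ and already lies in $W'$; take $I=\{1\}$, $\lambda^1=1$, $g^1=g$, $\mc{A}^1=\mc{A}$, $\Phi^1=\mathrm{id}$. For the inductive step, split into two cases according to the type of the root $0\in\mc{A}$.

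Suppose first that the root is of type I, so $w(g_0)=m_j$ for some $j$ and $g=\frac{1}{m_j}\sum_{k=1}^{d}g_{\beta_k}$ with $g_{\beta_1}<\cdots<g_{\beta_d}$ and immediate successors $\{\beta_1,\ldots,\beta_d\}$. Each subtree $\mc{A}_{\beta_k}$ is a valid tree for $g_{\beta_k}\in W$, and has strictly smaller size. By the inductive hypothesis, $g_{\beta_k}=\sum_{i_k\in I_k}\mu_k^{i_k}h_k^{i_k}$ with $h_k^{i_k}\in W'$ carrying a tree $\mc{B}_k^{i_k}$ and an order-preserving map $\Psi_k^{i_k}\colon\mc{B}_k^{i_k}\to\mc{A}_{\beta_k}$ satisfying (i)-(iii). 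Set $I=\prod_{k=1}^{d}I_k$, and for $\vec{i}=(i_1,\ldots,i_d)\in I$ put $\lambda^{\vec{i}}=\prod_k\mu_k^{i_k}$ and $g^{\vec{i}}=\frac{1}{m_j}\sum_{k=1}^{d}h_k^{i_k}$. Since $\supp h_k^{i_k}\subset\ran g_{\beta_k}$ we still have $h_1^{i_1}<\cdots<h_d^{i_d}$, so $g^{\vec{i}}\in W'$. The corresponding tree $\mc{A}^{\vec{i}}$ is built by joining the $\mc{B}_k^{i_k}$ under a fresh root, and $\Phi^{\vec{i}}$ sends this new root to $0\in\mc{A}$ and acts as $\iota_k\circ\Psi_k^{i_k}$ on each subtree, where $\iota_k\colon\mc{A}_{\beta_k}\hookrightarrow\mc{A}$ is inclusion. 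Expanding $\sum_{\vec{i}}\lambda^{\vec{i}}g^{\vec{i}}$ and using $\prod_k\sum_{i_k}\mu_k^{i_k}=1$ recovers $g$; conditions (i) and (ii) transfer from the induction on each subtree, and at the new root both $g^{\vec{i}}_0$ and $g_0$ are type I with weight $m_j$ and $d$ children, so (ii) and (iii) hold there as well.

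Suppose next that the root is of type II, so $g=\sum_{k=1}^{d}\lambda_{\beta_k}g_{\beta_k}$ with each $g_{\beta_k}$ of type I or $0$ and $\ran g_{\beta_k}\subset\ran g$. Each subtree $\mc{A}_{\beta_k}$ is again smaller, so by induction $g_{\beta_k}=\sum_{i_k\in I_k}\mu_k^{i_k}h_k^{i_k}$ with the associated data. Let $I=\bigsqcup_{k}I_k$ and for an index $i_k\in I_k$ set the coefficient $\nu^{i_k}=\lambda_{\beta_k}\mu_k^{i_k}$, the functional to be $h_k^{i_k}\in W'$ with tree $\mc{B}_k^{i_k}$, and the map $\Phi^{i_k}=\iota_k\circ\Psi_k^{i_k}\colon\mc{B}_k^{i_k}\to\mc{A}$. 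The coefficients sum to $\sum_k\lambda_{\beta_k}\sum_{i_k}\mu_k^{i_k}=1$, and conditions (i) and (ii) follow immediately from the inductive hypothesis; condition (iii) is vacuous since a type II root is not weighted.

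The main obstacle is purely bookkeeping: one must verify that the new $g^{\vec{i}}$ in the type I case really is admissible in $W'$ (the successor blocks stay successive because supports nest inside the $\ran g_{\beta_k}$, and the cardinality and weight at the new root match those used for $g$, so the relevant $(\mc{A}_{2n_j},1/m_j)$ operation applies), and that $\Phi^{\vec{i}}$ is genuinely order preserving on the glued tree once the subtree maps are composed with the canonical inclusions. Everything else is a straightforward unfolding of the inductive hypothesis.
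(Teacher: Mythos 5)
Your proof is correct and follows essentially the same route as the paper: backward/size induction on the tree, with the type I case handled by taking the product of the index sets, multiplying coefficients, and gluing the subtrees under a fresh root mapped to the old one, and the type II case handled by expanding the convex combination over the disjoint union of index sets. The only slight imprecision is writing $w(g_0)=m_j$ at a type I root, whereas in a tree for $g\in W$ the weight may be a product $m_{j_1}\cdots m_{j_r}$ with $d\le 2n_{j_1}+\cdots+2n_{j_r}$; this changes nothing, since by Remark \ref{R4} the set $W'$ is closed under the corresponding combined operations.
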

 \begin{proof}[\bf Proof.]
 We shall prove, using backward induction, that for each
 $a\in \mc{A}$ the functional $g_a$ with the  tree $(g_a)_{a\in\mc{A}^{\succeq \beta}}$,
 where $\mc{A}^{\succeq \beta}=\{a\in\mc{A}:\; a \succeq \beta\}$,
 satisfies the conclusion of the lemma.
 This will finish the proof of the lemma, since
  $g=g_0$, where
 $0\in\mc{A}$ denotes  the unique root of the tree $\mc{A}$.

 The first inductive step concerns $a\in\mc{A}$ which is maximal.
 In this case, setting $I_a=\{1\}$, $\lambda^1=1$,
 $\mc{A}^1=\{a\}$ and $\Phi^1(a)=a$, the required conditions are
 obviously satisfied.

 Let us pass to the general inductive step. We consider
 $a\in\mc{A}$ which is non-maximal,
 $S_a=\{\beta_1,\ldots,\beta_d\}$ and we assume that for each
 $k=1,\ldots,d$, the functional $g_{\beta_k}$ takes the form
 $g_{\beta_k}=\sum\limits_{i\in
 I_{\beta_k}}\lambda_{\beta_k}^ig_{\beta_k}^i$ and each $g_{\beta_k}^i\in W'$
 has a tree $(g_{\beta_k,\gamma}^i)_{\gamma\in \mc{A}_{\beta_k}^i}$
 such that there exists an order preserving map
 $\Phi_{\beta_k}^i:\mc{A}_{\beta_k}^i\to  \mc{A}^{\succeq \beta_k}$
 satisfying conditions (i), (ii), (iii).
 We distinguish two cases.

 {\bf Case 1.} The functional $g_a$ is of type I,
 $g_a=\frac{1}{w(g_a)}(g_{\beta_1}+\cdots+g_{\beta_d})$.\\
 We set $I_a=I_{\beta_1}\times\cdots \times I_{\beta_d}$ and for each
 $i=(i_1,\ldots,i_d)\in I_a$ we define $\lambda_a^i=
 \lambda_{\beta_1}^{i_1}\cdot\ldots\cdot \lambda_{\beta_d}^{i_d}$
 and
 $g_a^i=\frac{1}{w(g_a)}(g_{\beta_1}^{i_1}+\cdots+g_{\beta_d}^{i_d})$.
 It is evident that $g_a$ equals to the convex combination
 $\sum\limits_{i\in I_a}\lambda_a^i g_a^i$.
 For each $i= (i_1,\ldots,i_d)\in I_a$ we define the tree $\mc{A}^i_a$ as the disjoint union
  $\mc{A}_a^i=\{a\}\cup\bigcup\limits_{k=1}^d \mc{A}_{\beta_k}^{i_k}$
 with its ordering defined by the rule $\delta_1\preceq \delta_2$
 if and only if $\delta_1=a$ or if there exists
 $k\in\{1,\ldots,d\}$ such that $\delta_1,\delta_2\in \mc{A}_{\beta_k}^{i_k}$
 and $\delta_1\preceq \delta_2$ in the ordering of
 $\mc{A}_{\beta_k}^{i_k}$. We also define the order preserving map
 $\Phi_a^i:\mc{A}_a^i\to \mc{A}^{\succeq a}$ as follows;
 $\Phi_a^i(a)=a$ and
 $\Phi_a^i(\gamma)=\Phi_{\beta_k}^{i_k}(\gamma)$ for
 $\gamma\in\mc{A}_{\beta_k}^{i_k}$. It is trivial to observe that
 conditions (i), (ii), (iii) are satisfied.

 {\bf Case 2.}  The functional $g_a$ is of type II,
 $g_a=\sum\limits_{k=1}^d \lambda_{\beta_k}g_{\beta_k}$.\\
 Then, using our inductive hypothesis, we get that
 $g_a=\sum\limits_{k=1}^d \sum\limits_{i\in
 I_{\beta_k}}(\lambda_{\beta_k}\lambda_{\beta_k}^i)g_{\beta_k}^i$
 as a convex combination, while the required conditions
 (i), (ii), (iii) are obviously satisfied.

 The proof of the Lemma is complete.
 \end{proof}

 \begin{lemma}\label{L21}
 Let $g\in W$, $g=\frac{1}{w(g)}\sum\limits_{i=1}^dg_i$ where
 $w(g)=m_{j_1}\cdot\ldots\cdot m_{j_r}$ and $g_1<\cdots<g_d$ with
 $g_i\in W$ and $d\le 2n_{j_1}+\cdots+2n_{j_r}$. Let also $j\in
 \N$. Then
 \[   |g(\frac{1}{n_j}\sum\limits_{k=1}^{n_j}e_k)|\le
     \left\{ \begin{array} {l@{\quad} l}
           \frac{ 2}{w(g)m_{j}} & \mbox{ if }w(g)<m_{j} \\[4mm]
   \frac{1}{w(g)} & \mbox{ if }w(g)\ge
   m_{j}.
   \end{array}\right.     \]
 \end{lemma}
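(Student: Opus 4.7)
I would handle the two cases separately, relying on the elementary observation that $\|h\|_\infty\le 1$ for every $h\in W$ (immediate by induction on the operations defining $W$). For the easy case $w(g)\ge m_j$: since the $g_i$ have pairwise disjoint successive supports and $\|g_i\|_\infty\le 1$, for $x=\frac{1}{n_j}\sum_{k=1}^{n_j}e_k$ one has $|g_i(x)|\le |\mathrm{supp}(g_i)\cap[1,n_j]|/n_j$, so summing yields $\sum_i|g_i(x)|\le 1$ and hence $|g(x)|\le 1/w(g)$.

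For the main case $w(g)<m_j$, I would first apply Lemma \ref{L22} to write $g=\sum_{i\in I}\lambda^ig^i$ as a rational convex combination with each $g^i\in W'$ inheriting the same nested $r$-level top structure, so that the decomposition $g^i=\frac{1}{w(g)}\sum_k g^i_k$ with the same $w(g)$ is preserved. Since the target bound depends only on $w(g)$, convexity reduces matters to $g\in W'$. For such $g$ every non-maximal tree node is of type I, and therefore for every leaf $a$ one has $|g(e_{r_a})|=1/\Pi(a)$, where $\Pi(a)$ denotes the product of the weights of the internal ancestors of $a$; and since the top $r$ layers alone contribute the factor $w(g)=m_{j_1}\cdots m_{j_r}$, every $\Pi(a)\ge w(g)$. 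The task thus becomes bounding
\[
 |g(x)|\le \frac{1}{n_j}\sum_{a:\,r_a\le n_j}\frac{1}{\Pi(a)}.
\]

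I then partition the relevant leaves according to whether $\Pi(a)\ge w(g)m_j$ (\emph{deep}) or $\Pi(a)<w(g)m_j$ (\emph{shallow}). The deep leaves contribute at most $\frac{1}{w(g)m_j}$, because each term $\frac{1}{\Pi(a)}\le\frac{1}{w(g)m_j}$ while the number of relevant leaves is at most $n_j$ by support disjointness. For shallow leaves the additional weight accumulated below the top $r$ layers is strictly less than $m_j$, which forces every internal ancestor in the resulting ``shallow sub-tree'' to have index $j_a<j$ and hence branching $\le 2n_{j-1}$, and forces its depth to be at most $\log_2 m_j$ (each weight being $\ge 2$). Combined with the bound $d\le 2(n_{j_1}+\cdots+n_{j_r})\le 2n_{j-1}\log_2 m_j$ on the number of depth-$r$ nodes, this gives a count of shallow leaves of order $(2n_{j-1}\log_2 m_j)^{O(\log_2 m_j)}$, and the growth assumption $n_j>m_j^{4\log_2 m_j+2}\cdot Q_j^{2\log_2 m_j}$ with $Q_j=(n_{j-1}\log_2 m_j)^{\log_2 m_j}$ is designed precisely so that this count is at most $n_j/m_j$. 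Using $\Pi(a)\ge w(g)$ then bounds the shallow contribution by $\frac{1}{n_j}\cdot\frac{n_j}{m_j}\cdot\frac{1}{w(g)}=\frac{1}{w(g)m_j}$, and adding the two pieces produces the claimed estimate $|g(x)|\le \frac{2}{w(g)m_j}$.

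The principal obstacle is the combinatorial count of shallow leaves, which must be squeezed under $n_j/m_j$; once the deep/shallow dichotomy at the threshold $w(g)m_j$ is identified and the structural restrictions on the shallow sub-tree (weights $<m_j$, depth $\le \log_2 m_j$, branching $\le 2n_{j-1}$) are recorded, the rest is routine arithmetic with the fast-growing sequences $(m_j)$ and $(n_j)$ of Definition \ref{D13}.
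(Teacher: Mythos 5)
Your proposal is correct and follows essentially the same route as the paper: reduce to $W'$ via Lemma \ref{L22}, split the coordinates according to whether $|g(e_k)|$ exceeds $\frac{1}{w(g)m_j}$ (your deep/shallow leaf dichotomy is exactly this), bound the number of "large" coordinates by a branching-times-depth count of order $(2n_{j-1})^{\log_2 m_j}\log_2 m_j$, and absorb it using the growth condition on $n_j$. The paper merely packages the count as a separate claim about a single $W'$ functional applied to each child $g_i$, while you count leaves of a global tree; the combinatorics and the final estimates are identical.
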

 \begin{proof}[\bf Proof.]
 From Lemma \ref{L22}, we may  assume that
  $g\in W'$ and $g_i\in W'$ for each $i=1,\ldots,d$.
 We start with the following claim.
 \begin{claim} For every $f\in W'$,
 $\#\{k:\;|f(e_k)|>\frac{1}{m_j}\}\le(2n_{j-1})^{\log_2(m_j)-1}$.
\end{claim}
 \begin{proof}[\bf Proof of the claim.] We may select a family of
 functionals  $(f_a)_{a\in\mc{A}}$ indexed by a finite tree
 $\mc{A}$, with $f_a\in W$, such that
 \begin{enumerate}
 \item[(i)] The tree $\mc{A}$ has a unique root $0\in\mc{A}$ and
 $f_0=f$.
 \item[(ii)] For every maximal node $a\in\mc{A}$, $f_a=\pm e_n^*$.
 \item[(iii)] For every non-maximal node $a\in\mc{A}$,
  there exists
 $j_a\in\N$ such that
 $f_a=\frac{1}{m_{j_a}}(f_{\beta_1}+\cdots+f_{\beta_d})$ and $d\le
 2n_{j_a}$ where $S_a=\{\beta_1,\ldots,\beta_d\}$ is  the set of
 immediate successors
 of $a$ in $\mc{A}$ and $f_{\beta_1}<\cdots<f_{\beta_d}$.
 \end{enumerate}
 We may assume that $f(e_k)>\frac{1}{m_j}$ for every
 $k\in\supp(f)$. Since $w(f_a)\ge m_1\ge 2$ for every non-maximal
 $a\in\mc{A}$ it follows that the cardinality of every branch of
 the tree $\mc{A}$ is less than $\log_2(m_j)$. An easy inductive
 argument implies that $\# \supp(f)\le(2n_{j-1})^{\log_2(m_j)-1}$.
 \end{proof}

 Let now $g\in W'$.
 The case $w(g)\ge m_j$ is obvious.  Assume that $w(g)<m_j$.
 Then, as in the proof of  Corollary \ref{C2}, it follows that
 $d\le 2n_{j-1}\log_2(m_j)$.
 For $i=1,\ldots,d$, set $L_i=\{k:\; |g_i(e_k)|>\frac{1}{m_j}\}$ and
 $L=\bigcup\limits_{i=1}^dL_i$. From the claim above we get that
 $\# L_i\le (2n_{j-1})^{\log_2(m_j)-1}$ for each $i$, thus
 $\# L\le (2n_{j-1})^{\log_2(m_j)}\log_2(m_j)$. Therefore,
  splitting the functional $g$ as $g=g|_{L} + g|_{{\N\setminus L}}$ we get that
\begin{eqnarray*}
  |g(\frac{1}{n_j}\sum\limits_{k=1}^{n_j}e_k)| &
  \le &
 \frac{1}{w(g)}\frac{1}{n_j}\#L+\frac{1}{w(g)}\frac{1}{m_j}  \\
 & \le &
 \frac{1}{w(g)}\big(\frac{1}{n_j}(2n_{j-1})^{\log_2(m_j)}\log_2(m_j)
 +\frac{1}{m_j}\big)\le
 \frac{2}{w(g)\cdot m_j}.
 \end{eqnarray*}
 \end{proof}

 \begin{lemma}\label{L3}
 Let $g\in W$ and suppose that the functional $g$ admits a tree
 $(g_a)_{a\in\mc{A}}$ with the following property.
 For every $a\in\mc{A}$ such that $g_a$ is of type I with $w(g_a)<m_{j_0}^2$,
 the cardinality of the set $S_a$ of immediate successors of $a$ in $\mc{A}$
 satisfies $\# S_a\le m_{j_0}^2 Q_{j_0}$ \big(recall from Definition \ref{D13} that
  $Q_{j_0}= \big(n_{j_0-1}\cdot \log_2(m_{j_0})\big)^{\log_2(m_{j_0})}$\big).
   Then
 \[   |g(\frac{1}{n_{j_0}}\sum\limits_{k=1}^{n_{j_0}}e_k)|\le \frac{2}{m_{j_0}^2}.\]
 \end{lemma}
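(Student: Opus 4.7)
The plan is to adapt the strategy of Lemma \ref{L21}, this time exploiting the branching hypothesis to control the set of coordinates on which $g$ is appreciably large. First, applying Lemma \ref{L22} I decompose $g$ as a rational convex combination $g=\sum_{i\in I}\lambda^i g^i$ with each $g^i\in W'$; since the associated order preserving map $\Phi^i$ preserves weights and cardinalities of immediate successors at non-maximal nodes, whenever a node in the tree of $g^i$ has weight less than $m_{j_0}^2$, so does its image in $\mc{A}$, and their successor counts coincide. Hence every $g^i$ inherits the branching hypothesis of the lemma. Because $\sum\lambda^i=1$ it suffices to prove the bound separately for each $g^i$, so from now on I fix $g\in W'$ with tree $(g_a)_{a\in\mc{A}}$.

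In $W'$ every non-maximal node $a$ is of type I and $g_a=\frac{1}{w(g_a)}\sum_{\beta\in S_a} g_\beta$, where the children $g_\beta$ form a block sequence and therefore have pairwise disjoint supports. Iterating this decomposition down to the leaves $\ell$ of $\mc{A}$, which are of the form $g_\ell=\pm e_{k_\ell}^*$, the indices $k_\ell$ are pairwise distinct and
\[
|g(e_{k_\ell})|=\prod_{a'\prec \ell}\frac{1}{w(g_{a'})},
\]
the product ranging over the strict ancestors of $\ell$.

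Set $L=\{k\in\N:\ |g(e_k)|>1/m_{j_0}^2\}$. For $k\in L$ the unique leaf $\ell$ with $k_\ell=k$ satisfies $\prod_{a'\prec\ell}w(g_{a'})<m_{j_0}^2$. Since every weight in the tree is at least $m_1=2$, the depth of $\ell$ is strictly less than $2\log_2(m_{j_0})$; moreover each individual strict ancestor $a'$ of $\ell$ satisfies $w(g_{a'})<m_{j_0}^2$ (as one factor in a product bounded by $m_{j_0}^2$), so by the branching hypothesis $\# S_{a'}\le m_{j_0}^2 Q_{j_0}$. Counting the leaves reachable from the root by paths of depth at most $2\log_2(m_{j_0})-1$ with branching bounded by $m_{j_0}^2 Q_{j_0}$ at each level gives
\[
|L|\le (m_{j_0}^2 Q_{j_0})^{2\log_2(m_{j_0})}=m_{j_0}^{4\log_2(m_{j_0})}\cdot Q_{j_0}^{2\log_2(m_{j_0})}.
\]

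Finally, splitting $g$ on $\{1,\ldots,n_{j_0}\}$ according to membership in $L$ and using $|g(e_k)|\le 1$ on $L$ together with $|g(e_k)|\le 1/m_{j_0}^2$ off $L$,
\[
\Big|g\Big(\frac{1}{n_{j_0}}\sum_{k=1}^{n_{j_0}}e_k\Big)\Big|\le \frac{|L|}{n_{j_0}}+\frac{1}{m_{j_0}^2}.
\]
The requirement $n_{j_0}>m_{j_0}^{4\log_2(m_{j_0})+2}\cdot Q_{j_0}^{2\log_2(m_{j_0})}$ built into Definition \ref{D13} yields $|L|/n_{j_0}<1/m_{j_0}^2$, producing the claimed bound $2/m_{j_0}^2$. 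The main subtlety is the reduction via Lemma \ref{L22}, which is designed precisely so that the branching information the hypothesis refers to is transferred to the $W'$-trees; once in $W'$, the calculation reduces to the clean combinatorial count above because leaf coefficients factor multiplicatively over ancestor weights and the depth-weight trade-off is calibrated against $n_{j_0}$.
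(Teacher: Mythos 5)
Your proof is correct and follows essentially the same route as the paper's: reduce to $W'$ via Lemma \ref{L22} (noting the branching hypothesis transfers), bound the number of coordinates with $|g(e_k)|>1/m_{j_0}^2$ by a branch-length estimate ($<2\log_2(m_{j_0})$) combined with the branching bound $m_{j_0}^2Q_{j_0}$, and then split the average and invoke the growth condition on $n_{j_0}$. The only cosmetic difference is that the paper restricts $g$ to the set of large coordinates and bounds the support of that restriction, whereas you count the corresponding leaves directly through the multiplicative formula for leaf coefficients, which in fact sidesteps the small point of checking that the restricted functional still admits a tree with the stated property.
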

 \begin{proof}[\bf Proof.]
  It is enough to prove the statement for $g\in W'$, since from Lemma \ref{L22}
   the functional $g$
  takes the form  $g=\sum\limits_{i\in I}\lambda^ig^i$ as a convex combination, with each
  $g^i\in W'$ and such that each $g^i$ satisfies the assumption of the statement.

  Let $g\in W'$ satisfying the  assumption of the statement of the lemma.
  We set
  \[
  B_1=\{k:\; |g(e_k)|>\frac{1}{m_{j_0}^2}\}\qquad
  B_2=\{k:\; |g(e_k)|\le\frac{1}{m_{j_0}^2}\}
  \]
  and we consider the functionals $g_1=g_{|B_1}$ and $g_2=g_{|B_2}$.
 Then obviously
  \begin{equation} \label{sam1}
  |g_2(\frac{1}{n_{j_0}}\sum\limits_{k=1}^{n_{j_0}}e_k)|\le \frac{1}{m_{j_0}^2}.
  \end{equation}
  The property of the statement remains valid for the functional $g_1$; let
  $(g_a)_{a\in\mc{A}}$ be a tree of the functional $g_1$,
  satisfying the aforementioned property. Note that no functionals of type II appear in this tree,
  since $g_1\in W'$ (see Remark \ref{remsam1}).
  The fact that $|g_1(e_k)|>\frac{1}{m_{j_0}^2}$ for every $k\in\supp g_1$ implies that every
  branch of the tree $\mc{A}$ has length at most $\log_2(m_{j_0}^2)$. Since also
  $w(g_a)<m_{j_0}^2$ for every $a\in\mc{A}$,
  our assumption entails that each non-maximal $a\in\mc{A}$
   has at most $m_{j_0}^2Q_{j_0}$ immediate successors. Thus
    $\#\supp g_1 \le (m_{j_0}^2Q_{j_0})^{\log_2(m_{j_0}^2)}$.
    Using the growth condition concerning the sequence $(n_j)_j$ (see Definition \ref{D13})
    we derive that
   \begin{equation} \label{sam2}
  |g_1(\frac{1}{n_{j_0}}\sum\limits_{k=1}^{n_{j_0}}e_k)|\le \frac{1}{n_{j_0}}\cdot \#\supp g_1
   \le \frac{1}{n_{j_0}}\cdot m_{j_0}^{4\log_2(m_{j_0})}Q_{j_0}^{2\log_2(m_{j_0})}\le\frac{1}{m_{j_0}^2}.
  \end{equation}
  From \eqref{sam1}, \eqref{sam2} we conclude that
  $|g(\frac{1}{n_{j_0}}\sum\limits_{k=1}^{n_{j_0}}e_k)|\le \frac{2}{m_{j_0}^2}$.
 \end{proof}

 \begin{proposition}[basic inequality]\label{P1}
 Let $(x_k)_{k=1}^{n_{j_0}}$  be a $(C,\e)$ RIS in $\eqs_D$ with associated sequence
  $(j_k)_{k=1}^{n_{j_0}}$.
   Then for every $f\in D$
    there exists a functional $g\in W$ such that the following
  conditions are fulfilled.
  \begin{enumerate}
  \item[(1)] If $f$ is  of type I then
  either $w(g)=w(f)$ or $g=e_r^*$ or $g=0$.
  \item[(2)] $|f(\frac{1}{n_{j_0}}\sum\limits_{k=1}^{n_{j_0}}x_k)|
     \le C
     \big(g(\frac{1}{n_{j_0}}\sum\limits_{k=1}^{n_{j_0}}e_k)+\e\big)$.
  \end{enumerate}

    If we additionally assume that $j_0<j_1$  and that for every subinterval $I$ of the interval
  $\{1,2,\ldots,n_{j_0}\}$  with $\# (I)\ge m_{j_0}^2Q_{j_0}$ and for every functional
  $h\in K^{j_0}\cdot D$
  it holds that $|h(\sum\limits_{k\in I}x_k)|\le C\cdot \e \cdot\# I$, then the functional
  $g\in W$ may  selected to satisfy in addition the following property:
   \begin{enumerate}
  \item[(3)] The functional $g$ admits a tree
  $(g_\beta)_{\beta\in\mc{B}}$ with the property that for every $\beta\in\mc{B}$
  with $g_\beta$ of type I and $w(g_\beta)<m_{j_0}^2$, the node $\beta$ has at most
  $m_{j_0}^2Q_{j_0}$ immediate successors.
  \end{enumerate}
 \end{proposition}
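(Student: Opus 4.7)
The plan is to fix an analysis tree $T_f = (f_a)_{a \in \mathcal{A}}$ of $f$ (which exists by Remark \ref{R1}) and construct, by a bottom-up induction along $\mathcal{A}$, a companion functional $g \in W$ with the desired estimates. At each node $a$ I produce $g_a \in W$ together with the index set $K_a = \{k : \ran(f_a) \cap \ran(x_k) \ne \emptyset\}$, and establish an inductive estimate of the form
\[
\Bigl|f_a\Bigl(\sum_{k \in K_a} x_k\Bigr)\Bigr| \le C\Bigl(g_a\Bigl(\sum_{k \in K_a} e_k\Bigr) + \varepsilon\,\# K_a\Bigr).
\]
Assertion (2) follows at the root after division by $n_{j_0}$, and assertion (1) is ensured by keeping the $w$-operation at Type~I nodes unchanged, so that either $w(g) = w(f)$ or the construction degenerates to $g = \pm e_r^*$ or $g = 0$.

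At a leaf $f_a = \pm\chi_I \in G$ the RIS estimate $\|x_k\|_G \le \varepsilon$ gives $|f_a(x_k)| \le \varepsilon$, so I set $g_a = 0$ and absorb the contribution into the $\varepsilon$-error. At a Type~II node I form the same rational convex combination of the inductively obtained $g_{\beta_i}$, which stays in $W$ by rational convexity. At a Type~I node $f_a = \frac{1}{w(f_a)}\sum_{i=1}^d f_{\beta_i}$ with $w(f_a) = m_{j_1}\cdots m_{j_r}$ and $d \le n_{j_1}+\cdots+n_{j_r}$, I partition $K_a$ into the set $K_a^{\mathrm{sm}}$ of indices $k$ with $m_{j_k} > w(f_a)$ and its complement $K_a^{\mathrm{lg}}$. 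On $K_a^{\mathrm{sm}}$ the RIS condition (iii) yields $|f_a(x_k)| \le C/w(f_a)$, which I represent by placing $\frac{1}{w(f_a)} e_k^*$ into $g_a$; on $K_a^{\mathrm{lg}}$ I apply the induction to each child $f_{\beta_i}$ and combine the outputs. At most two boundary indices $k$ can have $\ran(x_k)$ properly cut by some $\ran(f_{\beta_i})$, and these are absorbed using $\|x_k\| \le C$ together with the RIS bounds $1/m_{j_1} \le \varepsilon$ and $\#\supp(x_k)/m_{j_{k+1}} \le \varepsilon$.

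The novelty of the present construction --- pointwise products --- is handled via Corollaries \ref{C2} and \ref{C5}: a product $f_a \in M$ with $w(f_a) < m_{j_k}$ decomposes as $\frac{1}{w(f_a)} \sum_{i=1}^{d'} h_i$ with $d' < n_{j_k-1}\log_2(m_{j_k})$ and $h_i \in D$, so I treat any such product exactly as a Type~I node of the same weight. In every case the resulting $g_a$ combines the inductive outputs with the $e_k^*$'s collected from the RIS condition, and a direct count bounds its number of summands by at most $d + \# K_a^{\mathrm{sm}} \le 2(n_{j_1}+\cdots+n_{j_r})$; by Remark \ref{R4} this places $g_a$ inside $W$, whose $\mathcal{A}_{2n_j}$-branching was set up precisely to accommodate this doubling.

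For assertion (3), the extra hypothesis on functionals $h \in K^{j_0} \cdot D$ acting on long contiguous RIS-blocks is invoked exactly when the inductive construction is about to produce a Type~I node of $g$ of weight $< m_{j_0}^2$ with more than $m_{j_0}^2 Q_{j_0}$ immediate successors. At such a node $f_a$ I group the offending children into contiguous blocks of indices $I \subset \{1, \ldots, n_{j_0}\}$ with $\# I \ge m_{j_0}^2 Q_{j_0}$; Corollary \ref{C5} ensures that the restriction of $f_a$ to the range of such a block is a functional in $K^{j_0} \cdot D$, so the hypothesis gives $|h(\sum_{k \in I} x_k)| \le C\varepsilon\,\# I$, which I absorb into the $\varepsilon$-error by zeroing the corresponding entries of $g_a$. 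The threshold $m_{j_0}^2 Q_{j_0}$ is precisely the one that will feed into Lemma \ref{L3} downstream. The main obstacle is not any single case but the simultaneous bookkeeping at Type~I and product nodes: tracking the weight-split of $K_a$, the boundary contributions, the pointwise-product unfolding, and the branching count, so that the constructed $g$ lies in $W$ with the correct weight, yields assertion (1) cleanly, and carries the branching restriction required in (3) for the subsequent application of Lemma \ref{L3}.
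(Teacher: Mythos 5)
Your overall scheme (fix a tree of $f$, do a bottom--up induction producing $g_a\in W$ with an estimate of the form $|f_a(\sum_{k}x_k)|\le C(g_a(\sum_k e_k)+\e\cdot\#K_a)$) is the same as the paper's, but the allocation you make at type I nodes is the wrong way round, and this is not a cosmetic point --- it breaks membership in $W$ and condition (3). You place $\frac{1}{w(f_a)}e_k^*$ into $g_a$ for \emph{every} $k\in K_a^{\mathrm{sm}}$, i.e.\ every $k$ with $w(f_a)<m_{j_k}$. But $K_a^{\mathrm{sm}}$ is a tail of the index set and can have on the order of $n_{j_0}$ elements (take $w(f_a)=m_2$ and $\ran f_a\supset\bigcup_k\ran x_k$), so your claimed count $d+\#K_a^{\mathrm{sm}}\le 2(n_{j_1}+\cdots+n_{j_r})$ is false; the node of $g_a$ of weight $w(f_a)$ would then have far more successors than the $(\mc{A}_{2n_{j_1}+\cdots+2n_{j_r}},\frac{1}{w(f_a)})$ operation of $W$ allows, and, when $w(f_a)<m_{j_0}^2$, far more than the $m_{j_0}^2Q_{j_0}$ successors permitted in (3), which is exactly what Lemma \ref{L3} needs downstream. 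The paper's Subcase 2c does the opposite: for the indices with $w(f_a)<m_{j_k}$ it \emph{recurses} into the children over disjoint subintervals $J_i$, and inserts $e_k^*$ only for the set $J_0$ of at most $d$ indices whose $\ran(x_k)$ meets two or more children's ranges (estimated by RIS (iii), $|f_a(x_k)|\le C/w(f_a)$); this is why at most $2d$ successors arise and why $W$ was built with $\mc{A}_{2n_j}$. Conversely, the regime $m_{j_k}\le w(f_a)$ (your $K_a^{\mathrm{lg}}$) is where one should \emph{not} recurse: there $|f_a(x_k)|\le\frac{1}{w(f_a)}\|x_k\|_{\ell_1}\le C\,\#\supp(x_k)/m_{j_{k+1}}\le C\e$ whenever $m_{j_{k+1}}\le w(f_a)$, so these terms go into the $\e$-error, with at most one exceptional index $k_0$ (when $m_{j_{k_0}}\le w(f_a)<m_{j_{k_0+1}}$) handled by taking $g=e_{k_0}^*$. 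Your related claim that ``at most two'' cut indices occur and can be absorbed into the $\e$-error using $\|x_k\|\le C$ is also wrong on both counts: up to $d$ indices can be cut, and $\|x_k\|\le C$ contributes $C$, not $C\e$, per such index --- they must be represented by $e_k^*$ coordinates of $g$, not discarded into the error.

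The mechanism you propose for (3) also misuses Corollary \ref{C5}. That corollary does not show that the restriction of $f_a$ to a long block lies in $K^{j_0}\cdot D$; it gives an arity bound ($d<n_{j_0-1}\log_2(m_{j_0}^2)$) for functionals whose factors all have weight $<m_{j_0}$. The correct dichotomy, as in the paper's Subsubcases 2cA/2cB, is: if $f_a\in K^{j_0}\cdot D$ and the current interval $J$ has $\#J\ge m_{j_0}^2Q_{j_0}$, set $g_a^J=0$ and apply the extra hypothesis directly with $h=f_a$; otherwise either $f_a\notin K^{j_0}\cdot D$, in which case Corollary \ref{C5} bounds the branching by $2d\le 4n_{j_0-1}\log_2(m_{j_0})\le m_{j_0}^2Q_{j_0}$ automatically, or $\#J<m_{j_0}^2Q_{j_0}$, in which case $\supp g_a^J\subset J$ gives the bound. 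As written, your construction neither keeps $g$ inside $W$ nor yields the branching restriction in (3), so the proof does not go through without reorganizing the type I step along the paper's lines.
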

 \begin{proof}[\bf Proof.]
 We begin with the proof of the first part of the proposition (without the
 additional assumption).
 We fix a tree $(f_a)_{a\in \mc{A}}$ of the functional $f$ (Definition \ref{D4}).
 Using backward induction
  we shall define, for every $a\in\mc{A}$ and every
 subinterval $J$ of $\{1,2,\ldots,n_{j_0}\}$, a functional
 $g_a^J\in W$ such that the following conditions are satisfied.
  \begin{enumerate}
  \item[(i)] If $f_a$  of type I  then
  either $g_a^J=e_r^*$ or $g_a^J=0$ or $g_a^J$ is of type I and takes the form
   $g_a^J=\frac{1}{w(f_a)}(\sum\limits_{i=1}^d
 g_{\beta_i}^{J_i}+\sum\limits_{k\in J_0}e_k^*)$ where
   $S_a=\{\beta_1,\ldots,\beta_d\}$, each $J_i$ for $1\le i\le d$ is a subinterval of $J$, $J_0\subset J$
   and  the sets $J_0,J_1,\ldots, J_d$ are pairwise disjoint.
  \item[(ii)] $|f_a(\sum\limits_{k\in J}x_k)|
     \le C
     \big(g_a^J(\sum\limits_{k\in J}e_k)+\e\cdot\#J\big)$.
  \item[(iii)] The functional $g_a^J$ has nonnegative coordinates and $\supp g_a^J\subset J$.
    \end{enumerate}
  When the inductive construction is completed, the functional
  $g=g_0^{J_0}$, where $0\in A$ is the root of the tree $\mc{A}$ and
  $J_0=\{1,\ldots,n_{j_0}\}$, satisfies the conclusion of the
  proposition.

 The first inductive step concerns $a\in\mc{A}$ which are maximal.
 Then $f_a\in G$. We set $g_a^J=0$ for every subinterval $J$.
 Since  $\|x_k\|_G\le\e$ for each $k$
  (condition (i) of Definition \ref{D6}) it follows that
  $|f_a(\sum\limits_{k\in J}x_k)|\le \sum\limits_{k\in
 J}|f_a(x_k)|\le \e\cdot \#J$  and thus condition (ii) is
 satisfied.

 Let us pass to the general inductive step. We distinguish two
 cases.

 {\bf Case 1.}
 The functional $f_a$ is of type II,
 $f_a=\sum\limits_{i=1}^d\lambda_{\beta_i}f_{\beta_i}$.\\
 For every subinterval  $J$ we set
 $g_a^J=\sum\limits_{i=1}^d\lambda_{\beta_i}g_{\beta_i}^J$.
 Then, using the inductive hypothesis we get that
 \begin{eqnarray*}
  |f_a(\sum\limits_{k\in J}x_k)| & \le &
  \sum\limits_{i=1}^d\lambda_{\beta_i}|f_{\beta_i}(\sum\limits_{k\in
  J}x_k)|\le
  \sum\limits_{i=1}^d\lambda_{\beta_i}
  C\big(g_{\beta_i}^J(\sum\limits_{k\in J}e_k)+\e\cdot \#J\big)\\
   & = &  C\big(g_a^J(\sum\limits_{k\in J}e_k)+\e\cdot \#J\big).
 \end{eqnarray*}

 {\bf Case 2.}
 The functional $f_a$ is of type I,
 $f_a=\frac{1}{w(f_a)}(f_{\beta_1}+f_{\beta_2}+\cdots+f_{\beta_d})$
 (where
 $w(f_a)=m_{j^a_1}\cdot \ldots\cdot m_{j^a_r}$ and
 $d\le n_{j^a_1}+\cdots+n_{j^a_r}$ for some $j^a_1,\ldots,j^a_r\in\N$).\\
  Fix $J$  a subinterval of
 $\{1,\ldots,n_{j_0}\}$. We distinguish three subcases.

{\bf Subcase 2a.} $m_{j_{k_0}}\le w(f_a)<m_{j_{k_0+1}}$ for some $k_0\in J$.\\
 Then for $k\in J$ with $k<k_0$ we have that $m_{j_{k+1}}\le
 m_{j_{k_0}}\le w(f_a)$ and thus, using conditions (i), (ii) of Definition
 \ref{D6}, we get that
 \[  |f_a(x_k)|\le \frac{1}{w(f_a)}\|x_k\|_{\ell_1}
 \le\frac{1}{m_{j_{k+1}}}\cdot C\cdot \#\supp(x_k) \le C\cdot \e.\]
 For $k\in J$ with $k>k_0$, using conditions (ii), (iii) of Definition \ref{D6}, we get that
 \[ |f_a(x_k)|\le \frac{C}{w(f_a)}\le  \frac{C}{m_{j_1}}\le C\cdot
 \e.  \]
 If $\ran(f_a)\cap\ran(x_{k_0})\neq \emptyset$ we set  $g_a^J=e_{k_0}^*$,
  otherwise we set $g_a^J=0$.
  The inductive
 conditions are easily established.

 {\bf Subcase 2b.} $m_{j_{k+1}}\le w(f)$ for every $k\in J$.\\
 Then, as before, $|f_a(x_k)|\le C\cdot \e$ for every $k\in J$ and
 we set $g_a^J=0$.

  {\bf Subcase 2c.} $w(f_a)<m_{j_k}$ for all $k\in J$.\\
 Let $E_i=\ran f_{\beta_i}$ for $i=1,\ldots, d$. We set
 \[ J_0=\{k\in J: \; \ran(x_k) \mbox{ intersects at least two
 }E_i,\; i=1,\ldots,d\}\]
 and for $i=1,\ldots,d$ we set
\[ J_i=\{k\in J\setminus J_0:\; \ran(x_k) \mbox{ intersects
}E_i\}.\]
 We observe that $\#J_0\le d$ and  that each $J_i$ is a
 subinterval of $J$. From our inductive hypothesis the functionals
 $(g_{\beta_i}^{J_i})_{i=1}^d$ have been  defined satisfying
 conditions (i), (ii), (iii).  The family
 $\{J_i:\;i=1,\ldots,d\}\cup \{\{k\}:\;k\in J_0\}$ consists of
 pairwise disjoint intervals while its cardinality does not exceed
 $2d$.
  We define
 \[ g_a^J=\frac{1}{w(f_a)}(\sum\limits_{i=1}^d
 g_{\beta_i}^{J_i}+\sum\limits_{k\in J_0}e_k^*).\]
 From our definitions it follows that $g_a^J\in W$ (see Remark \ref{R4}).
 Observe, for later use, the following. If $w(f_a)<m_{j_0}^2$ and $f_a\not\in K^{j_0}\cdot D$
 then   Corollary \ref{C5}  entails that $d\le n_{j_0-1}\log_2(m_{j_0}^2)$ and hence
  $2d\le   4n_{j_0-1}\log_2(m_{j_0})\le m_{j_0}^2Q_{j_0}$.

 Since $w(f_a)<m_{j_k}$ for every $k\in J$, condition (iii) of
 Definition \ref{D6} implies that
 $|f_a(x_k)|\le\frac{C}{w(f_a)}$ for every $k\in J$. From this and
 from
 our inductive hypotheses we get that
 \begin{eqnarray*}
 |f_a(\sum\limits_{k\in J}x_k)| & \le & \sum\limits_{k\in
 J_0}|f_a(x_k)|+
 \frac{1}{w(f_a)}\sum\limits_{i=1}^d|f_{\beta_i}(\sum\limits_{k\in
 J_i}x_k)| \\
 &\le & \sum\limits_{k\in
 J_0}\frac{C}{w(f_a)}+\frac{1}{w(f_a)}\sum\limits_{i=1}^dC\big(g_{\beta_i}^{J_i}(\sum\limits_{k\in
 J_i}e_k)+\e\cdot \# J_i\big)  \\
 &\le & C\big(g_a(\sum\limits_{k\in J}e_k)+\e\cdot \#J\big).
 \end{eqnarray*}
 This completes the proof of the general inductive step and
 finishes the  proof of the first part of the proposition.

 Next, we proceed with the proof of the second part of the proposition,
 where the additional assumption is made. Then in addition to conditions (i), (ii), (iii)
 we require the following.

 \begin{enumerate}
  \item[(iv)]  If $g^J_a$ is of type $I$ with $w(g^J_a)<m_{j_0}^2$ then
   $\#\{i:\; g^{J_i}_{\beta_i}\neq 0\}+\#J_0\le m_{j_0}^2 Q_{j_0}$ (where
  the notation comes from condition (i)).
  \end{enumerate}

  The  procedure remains the same
   for the first inductive step
   and  for Case 1, Subcase 2a and Subcase 2b in the general inductive step.
  The difference concerns Subcase 2c
   (i.e. when $w(f_a)<m_{j_k}$ for all $k\in J$)
  where we distinguish two subsubcases.

  {\bf Subsubcase 2cA.}  $f_a\in K^{j_0}\cdot D$ and $\# J\ge m_{j_0}^2Q_{j_0}$.\\
   We set $g^J_a=0$ and our additional assumption yields
   condition (ii).

  {\bf Subsubcase 2cB.}  $f_a\not\in K^{j_0}\cdot D$ or $\# J< m_{j_0}^2Q_{j_0}$.\\
  We proceed exactly as in the proof of Subcase 2c in the first part of the proposition.

  We have to examine condition (iv).
  Observe that, if $g^J_a$ is of type I with $w(g^J_a)<m_{j_0}^2$ there are two cases. Either
  $f_a\not\in K^{j_0}\cdot D$ and $w(f_a)=w(g^J_a)<m_{j_0}^2$, in which
  case (see the proof of  Subcase 2c) the sum in the left side of (iv)
     does not exceed  $2d$ which is at most $m_{j_0}^2Q_{j_0}$, or
  $\# J< m_{j_0}^2Q_{j_0}$, in which case the same upper bound is derived from
  the fact that $\supp g^J_a\subset J$.

  This completes the proof of the general inductive step and the proof of
   the second part of the proposition.
 \end{proof}

 \begin{corollary}\label{C6}
 Let $(x_k)_{k=1}^{n_{j_0}}$ be a $(C,\e)$ RIS with
 $\e\le\frac{1}{m_{j_0}^2}$. Then for $f\in D$ of type I we have that
  \[   |f(\frac{1}{n_{j_0}}\sum\limits_{k=1}^{n_{j_0}}x_k)|\le
     \left\{ \begin{array} {l@{\quad} l}
           \frac{ 3C}{w(f)m_{j_0}} & \mbox{ if }w(f)<m_{j_0} \\[4mm]
   C(\frac{1}{w(f)}+\frac{1}{m_{j_0}^2}) & \mbox{ if }w(f)\ge
   m_{j_0}.
   \end{array}\right.     \]
   In particular $\|\frac{1}{n_{j_0}}\sum\limits_{k=1}^{n_{j_0}}x_k\|\le
   \frac{2C}{m_{j_0}}$.
 \end{corollary}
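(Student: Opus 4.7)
The plan is to apply the basic inequality (Proposition \ref{P1}) to dominate $|f(\frac{1}{n_{j_0}}\sum_{k}x_k)|$ by $C\big(g(\frac{1}{n_{j_0}}\sum_{k}e_k)+\e\big)$ for a suitably chosen $g\in W$, and then bound the evaluation of $g$ on the average via Lemma \ref{L21}. Since $f$ is assumed of type I, part (1) of the basic inequality forces the trichotomy $g=0$, $g=e_r^*$, or $w(g)=w(f)$.

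The degenerate subcases are essentially immediate. If $g=0$, the bound reduces to $C\e\le C/m_{j_0}^2$; if $g=e_r^*$, we also pick up at most $C/n_{j_0}$, which is negligible. In either case, for $w(f)<m_{j_0}$ one checks that $C/m_{j_0}^2\le 3C/(w(f)m_{j_0})$ since $w(f)<m_{j_0}$, and for $w(f)\ge m_{j_0}$ the bound is trivially dominated by $C(1/w(f)+1/m_{j_0}^2)$.

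The main subcase is $w(g)=w(f)$. Here I invoke Lemma \ref{L21}: when $w(f)<m_{j_0}$ it yields $g(\frac{1}{n_{j_0}}\sum_k e_k)\le 2/(w(f)m_{j_0})$, and combining with $\e\le 1/m_{j_0}^2\le 1/(w(f)m_{j_0})$ produces the required $3C/(w(f)m_{j_0})$. When $w(f)\ge m_{j_0}$, Lemma \ref{L21} gives $1/w(f)$, which together with the $\e$-term yields exactly $C(1/w(f)+1/m_{j_0}^2)$.

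For the ``in particular'' norm estimate, every $f\in D$ is by construction a rational convex combination of type I and type 0 functionals, so it suffices to treat these two cases. For $f=\pm\chi_I$ of type 0, $|f(\frac{1}{n_{j_0}}\sum_k x_k)|\le \max_k\|x_k\|_G\le \e\le 2C/m_{j_0}$. For type I, the estimates above are each dominated by $2C/m_{j_0}$: when $w(f)<m_{j_0}$ we use $w(f)\ge m_1=2$ to get $3C/(w(f)m_{j_0})\le 3C/(2m_{j_0})\le 2C/m_{j_0}$, and when $w(f)\ge m_{j_0}$ we bound $C(1/w(f)+1/m_{j_0}^2)\le C/m_{j_0}+C/m_{j_0}^2\le 2C/m_{j_0}$. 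Since convex combinations preserve these bounds, taking the supremum over $D$ gives the desired $2C/m_{j_0}$. I anticipate no substantial obstacle; the case analysis is dictated by the dichotomies already built into the basic inequality and Lemma \ref{L21}.
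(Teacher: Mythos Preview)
Your proposal is correct and follows essentially the same route as the paper's proof: apply the basic inequality (Proposition~\ref{P1}) to reduce to an estimate on $g(\frac{1}{n_{j_0}}\sum_k e_k)$, and then invoke Lemma~\ref{L21}. You in fact give more detail than the paper, explicitly treating the degenerate cases $g=0$ and $g=e_r^*$ and spelling out the type~0 case for the ``in particular'' norm estimate, whereas the paper compresses all of this into a single sentence.
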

 \begin{proof}[\bf Proof.]
 From the basic inequality (Proposition \ref{P1}) there exists $g\in W$ with either
  $w(g)=w(f)$ or $g= e_r^*$ or $g=0$,
 such that
 $|f(\frac{1}{n_{j_0}}\sum\limits_{k=1}^{n_{j_0}}x_k)|
     \le C
     \big(g(\frac{1}{n_{j_0}}\sum\limits_{k=1}^{n_{j_0}}e_k)+\e\big)$.
     From Lemma \ref{L21} we get that if $w(f)<m_{j_0}$ then
 $|f(\frac{1}{n_{j_0}}\sum\limits_{k=1}^{n_{j_0}}x_k)|\le
 C(\frac{2}{w(f)m_{j_0}}+\frac{1}{m_{j_0}^2})\le\frac{3C}{w(f)m_{j_0}}$,
 while if $w(f)\ge m_{j_0}$ then
 $|f(\frac{1}{n_{j_0}}\sum\limits_{k=1}^{n_{j_0}}x_k)|\le
 C(\frac{1}{w(f)}+\frac{1}{m_{j_0}^2})$.
 \end{proof}

 \begin{definition} A vector $x\in \eqs_D$ is said to be a $C-\ell^k_1$
 average if $x$ takes the form
 $x=\frac{1}{k}\sum\limits_{i=1}^kx_i$,
 with $\|x_i\|\le C$ for each $i$, $x_1<\cdots<x_k$ and $\|x\|\ge 1$.
 \end{definition}

 \begin{lemma}\label{L1} Let $Y$ be a block subspace of $\eqs_D$ and $k\in \N$ .
 Then there exists a vector $x\in Y$ which is a $2-\ell_1^k$
 average.
 \end{lemma}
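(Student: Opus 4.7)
The plan is to argue by contradiction. Suppose that no $x \in Y$ is a $2$-$\ell_1^k$ average, and derive a contradiction by squeezing $\|y_1+\cdots+y_n\|$ between incompatible upper and lower bounds for a suitable block sequence.

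First I would choose the parameters. Pick $j_0$ large enough that $n_{2j_0} \ge k^{\lceil \log_2 m_{2j_0}\rceil}$; this is possible by the growth condition on $(n_j)_j$ in Definition~\ref{D13}. Set $l = \lceil \log_2 m_{2j_0}\rceil$ and $n = k^l$, so that simultaneously $n \le n_{2j_0}$ and $2^l \ge m_{2j_0}$. Using that $Y$ is a block subspace, pick a normalized block sequence $y_1 < y_2 < \cdots < y_n$ in $Y$.

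The upper bound for $\|\sum_{i=1}^n y_i\|$ would proceed by a level-by-level iteration. At level $0$ the vectors $y_i$ have norm $1 \le 2$. Assuming no $2$-$\ell_1^k$ average exists in $Y$, any block $u_1 < \cdots < u_k$ of vectors in $Y$ with $\|u_s\| \le 2$ must satisfy $\|\tfrac{1}{k}\sum_s u_s\| < 1$, i.e.\ $\|\sum_s u_s\| < k$. Partitioning $\{1,\dots,n\}$ into $k^{l-1}$ consecutive blocks of length $k$ and letting $z^{(1)}_j$ be the corresponding sums of $y_i$'s, we obtain $\|z^{(1)}_j\| < k$, so that the rescaled vectors $\tfrac{2}{k}z^{(1)}_j$ have norm $< 2$ and sit in $Y$. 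Iterating $l$ times and keeping track of the normalizations, I would conclude
\[
\Bigl\|\sum_{i=1}^n y_i\Bigr\| < \frac{n}{2^l}.
\]

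The lower bound comes from the mixed-Tsirelson structure. For each $i$ select $f_i \in D$ with $f_i(y_i) = 1$, and then replace $f_i$ by its restriction $E_i f_i$ where $E_i = \mathrm{ran}(y_i)$; closure of $D$ under restriction to intervals (Remark~\ref{R6}(iv)) keeps these restrictions in $D$, and since $y_i$ is supported in $E_i$ we still have $(E_i f_i)(y_i) = 1$. The functionals $(E_i f_i)_{i=1}^n$ are successive and $n \le n_{2j_0}$, so by closure of $D$ under the $(\mathcal{A}_{n_{2j_0}}, 1/m_{2j_0})$ operation, $f := \tfrac{1}{m_{2j_0}}\sum_{i=1}^n E_i f_i \in D$. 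Then $f(\sum_i y_i) = n/m_{2j_0}$, yielding
\[
\Bigl\|\sum_{i=1}^n y_i\Bigr\| \ge \frac{n}{m_{2j_0}}.
\]
Combining both estimates gives $m_{2j_0} > 2^l \ge m_{2j_0}$, a contradiction.

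The main delicate point is the inductive upper bound: one must carefully track the rescaling so that at each level the vectors remain of norm $\le 2$ (so the hypothesis ``no $2$-$\ell_1^k$ average'' applies) while the accumulated scaling factor is exactly $2^l$ at the final stage. Everything else is bookkeeping: the lower bound uses nothing beyond the $(\mathcal{A}_{n_{2j_0}}, 1/m_{2j_0})$ saturation and interval-restriction closure of $D$, and the choice of $j_0$ is forced by matching $n = k^l \le n_{2j_0}$ against $2^l \ge m_{2j_0}$.
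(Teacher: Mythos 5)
Your proposal is correct in substance and follows essentially the same route as the paper, which for this lemma simply refers to Lemma II.22 of \cite{ArTo}: the standard Gowers--Maurey dichotomy in which the negation of the $2$-$\ell_1^k$-average property is iterated $l$ times to bound $\|\sum_{i=1}^{k^l}y_i\|$ from above, and this is played against the lower estimate $k^l/m_{2j_0}$ obtained from the $(\mc{A}_{n_{2j_0}},\frac{1}{m_{2j_0}})$ operation together with the closure of $D$ under restriction to intervals. Only two small bookkeeping repairs are needed: at the first level apply the hypothesis to $2y_i$ rather than to the norm-one $y_i$ (or choose $l$ with $2^{l-1}\ge m_{2j_0}$), since the iteration as literally described yields only $\|\sum_i y_i\|<k^l/2^{l-1}$, which does not contradict $2^l\ge m_{2j_0}$; and, since functionals in $D$ need not attain the norm, choose $f_i\in D$ with $f_i(y_i)>1-\varepsilon$ instead of $f_i(y_i)=1$, which changes nothing in the contradiction.
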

 For a proof we refer to \cite{ArTo} Lemma II.22.

 \begin{lemma}\label{L2} If $x$ is a $C-\ell_1^k$ average, $d\le k$ and $E_1<\cdots
 <E_d$ is any sequence of intervals, then
 $\sum\limits_{i=1}^d\|E_ix\|\le C(1+\frac{2d}{k})$.
 \end{lemma}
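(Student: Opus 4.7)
The plan is to reduce the statement to a combinatorial count of intersecting pairs $(i,j)$, using bimonotonicity of the basis to bound each individual norm and Lemma \ref{L20} to bound the total count.

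First I will write $x=\frac{1}{k}\sum_{j=1}^k x_j$ and observe that, by bimonotonicity of the basis $(e_n)_{n\in\N}$ (Remark \ref{R3}), each restriction satisfies $\|E_i x_j\|\le \|x_j\|\le C$, while $E_i x_j=0$ whenever $E_i\cap \supp x_j=\emptyset$. The triangle inequality then gives
\[
\sum_{i=1}^d\|E_i x\|\;\le\;\frac{1}{k}\sum_{i=1}^d\sum_{j=1}^k \|E_i x_j\|\;\le\;\frac{C}{k}\cdot N,
\]
where $N$ is the cardinality of the set of pairs $(i,j)$ with $E_i\cap \supp x_j\ne\emptyset$.

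Next, since $x_1<\cdots<x_k$, the ranges $\ran x_j=[\min\supp x_j,\max\supp x_j]$ form a successive family $\ran x_1<\cdots<\ran x_k$ of $k$ intervals of $\N$; the $E_1<\cdots<E_d$ form a successive family of $d$ intervals by assumption. Any pair $(i,j)$ contributing to $N$ satisfies $E_i\cap \ran x_j\ne\emptyset$, so Lemma \ref{L20}, applied to these two families, gives $N\le d+k-1$. Therefore
\[
\sum_{i=1}^d\|E_i x\|\;\le\;\frac{C(d+k-1)}{k}\;\le\; C\Bigl(1+\frac{d}{k}\Bigr)\;\le\; C\Bigl(1+\frac{2d}{k}\Bigr),
\]
which is even slightly stronger than the claimed bound.

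I do not foresee a substantive obstacle: the combinatorial heart of the argument is already encapsulated in Lemma \ref{L20}, and the per-pair estimate $\|E_i x_j\|\le \|x_j\|\le C$ is immediate from bimonotonicity of $(e_n)_{n\in\N}$. Notice that the hypothesis $d\le k$ is not even needed for the argument, though of course the bound $C(1+\frac{2d}{k})$ is only informative in that range.
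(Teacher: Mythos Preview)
Your proof is correct. The paper itself does not prove this lemma but refers to \cite{ArTo}, Lemma II.23; the standard argument there proceeds differently: for each interval $E_i$ one separates the indices $j$ with $\supp x_j\subset E_i$ (these sets are pairwise disjoint as $i$ varies, so their total size is at most $k$) from the at most two ``boundary'' indices where $E_i$ cuts through $\ran x_j$, yielding
\[
\sum_{i=1}^d\|E_ix\|\le \frac{C}{k}\Bigl(\sum_{i=1}^d\#\{j:\supp x_j\subset E_i\}+2d\Bigr)\le C\Bigl(1+\frac{2d}{k}\Bigr),
\]
which is where the factor $2$ originates. Your route via Lemma~\ref{L20} is more economical: by packaging the combinatorics into a single count of intersecting pairs you obtain directly $N\le d+k-1$ and hence the sharper bound $C(1+\tfrac{d-1}{k})$, and you are right that the hypothesis $d\le k$ is not used. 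Both arguments rest on the same bimonotonicity estimate $\|E_ix_j\|\le C$; yours simply replaces the ad hoc ``interior/boundary'' split by an appeal to a lemma already available in the paper.
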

 For a proof we refer to \cite{ArTo} Lemma II.23.

 \begin{remark}\label{R5}
 (i) If $x$ is a $C-\ell_1^{n_j}$ average and $f$ is of
 type I  with
 $w(f)<m_j$, then from Lemma \ref{L2} and Corollary \ref{C2} we get that
  $|f(x)|\le \frac{2C}{w(f)}$.\\
  (ii) Suppose that \seq{x}{k} is a block sequence in $\eqs_D$, such
 that each $x_k$ is a $C-\ell_1^{n_{j_k}}$ average
  for an increasing sequence $(j_k)_{k\in\N}$ and
 $\|x_k\|_G\le\e$ for all $k$.
 From (i), if $f\in D$ with $w(f)<m_{j_k}$ then
 $|f(x_k)|\le \frac{2C}{w(f)}$.
 Thus, we may easily select a subsequence of \seq{x}{k}
  which is a $(2C,\e)$ RIS.
  \end{remark}

 \begin{proposition}\label{P4}
 The space $\eqs_D$ is a strictly singular extension of $Y_G$ (i.e. the
 identity operator $I:\eqs_D\to Y_G$, where $Y_G$ is the completion of
 $(c_{00}(\N),\|\cdot\|_G)$, is strictly singular).
 \end{proposition}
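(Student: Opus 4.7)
The plan is to show that in every block subspace $Y$ of $\eqs_D$ one can produce vectors $x$ whose $\|\cdot\|_G$-norm is arbitrarily small compared to $\|x\|_{\eqs_D}$; this is equivalent to strict singularity of $I:\eqs_D\to Y_G$. I will build a $2$-$\ell_1^{n_{j_0}}$ average $x=\frac{1}{n_{j_0}}\sum_{i=1}^{n_{j_0}}z_i$ inside $Y$ whose constituents $z_i$ have very small ``total coefficient sum'' $\chi_\N(z_i):=\sum_l a_l^i$ (for $z_i=\sum_l a_l^i e_l\in\co$). For such an $x$ and any finite interval $I\subset\N$, the at-most-two $z_i$'s whose supports are split by $I$ contribute at most $\|z_i\|_G\le 2$ each to $n_{j_0}\cdot\chi_I(x)$, while each $z_i$ with $\supp z_i\subset I$ contributes exactly $\chi_\N(z_i)$, forcing $\|x\|_G=O(1/n_{j_0})$. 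Simultaneously $\|x\|_{\eqs_D}\ge 1$ holds by the definition of an $\ell_1^{n_{j_0}}$ average, and we win by taking $n_{j_0}$ large.

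First I would reduce to a normalized block sequence $(y_n)\subset Y$ satisfying $|\chi_\N(y_n)|\le 2^{-n-1}$. Since $|\chi_\N(v)|\le\|v\|_G\le\|v\|_{\eqs_D}$ for every $v\in\co$, the sequence $(\chi_\N(y_n))$ is bounded; by Bolzano--Weierstrass a subsequence converges to some $\alpha$. If $\alpha\ne 0$, I replace $(y_n)$ by the (rescaled) successive differences $y_{2n-1}-y_{2n}\in Y$, whose $\chi_\N$-values tend to $0$, and then thin the sequence so that the decay is geometric. Because $(y_n)$ is bimonotone as a block basic sequence of the bimonotone basis $(e_n)$ (Remark \ref{R3}), every $z=\sum_{n\ge N}c_n y_n$ in the tail $Y_N:=\overline{\spann}\{y_n:n\ge N\}$ obeys $|c_n|\le\|z\|_{\eqs_D}$, whence $|\chi_\N(z)|\le\|z\|_{\eqs_D}\cdot 2^{-N}$.

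Finally, Lemma \ref{L1} applied to $Y_N$ with $k=n_{j_0}$ provides a $2$-$\ell_1^{n_{j_0}}$ average $x=\frac{1}{n_{j_0}}\sum_{i=1}^{n_{j_0}}z_i\in Y_N\subset Y$ with $z_1<\cdots<z_{n_{j_0}}$, $\|z_i\|_{\eqs_D}\le 2$ and $\|x\|_{\eqs_D}\ge 1$. By the previous step $|\chi_\N(z_i)|\le 2^{1-N}$ for each $i$, so the boundary/interior dichotomy above yields, for every finite interval $I\subset\N$,
\[ |\chi_I(x)|\le \frac{1}{n_{j_0}}\bigl(4+n_{j_0}\cdot 2^{1-N}\bigr). \]
Choosing $N$ so large that $2^{1-N}\le 1/n_{j_0}$ gives $\|x\|_G\le 5/n_{j_0}$, hence $\|x\|_G/\|x\|_{\eqs_D}\le 5/n_{j_0}\to 0$ as $n_{j_0}\to\infty$. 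The only mildly delicate point is the initial reduction, where one must verify that the difference-and-thinning procedure remains inside $Y$ with infinite-dimensional span; this is a standard diagonal extraction.
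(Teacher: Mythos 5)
Your argument is correct in substance, but it takes a genuinely different route from the paper's. The paper argues by contradiction: assuming $\|z\|_G\ge\e\|z\|$ on a block subspace $Z$, it picks normalized blocks $z_n\in Z$ together with norming functionals $z_n^*$ supported on the same ranges, passes to the differences $x_n=z_{2n-1}-z_{2n}$ with $\sum_n|\chi_{\N}(x_n)|<\frac{1}{2}$ (the same differencing trick you use), bounds $\|\sum_{i=1}^{k}x_i\|_G<\frac{9}{2}$ by the same boundary/interior interval dichotomy, and then gets the lower bound for the average $x=\frac{1}{n_{2j}}\sum_{i=1}^{n_{2j}}x_i$ not from $\ell_1$-averages but from the single functional $x^*=\frac{1}{m_{2j}}\sum_{i}z^*_{2i-1}\in B_{\eqs_D^*}$ (closure of $D$ under the $(\mc{A}_{n_{2j}},\frac{1}{m_{2j}})$ operation), which gives $\|x\|\ge\frac{1}{m_{2j}}$; the contradiction then comes from $m_{2j}/n_{2j}\to 0$. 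You argue directly and obtain the stronger lower bound $\|x\|\ge 1$ by invoking Lemma \ref{L1}, so you never need the dual functionals $z_n^*$ or the lacunarity of $(m_j,n_j)$ at this point; your construction is in fact close in spirit to what the paper does later for the predual in Lemma \ref{L12}. What the paper's route buys is that it uses only the trivial norming functionals plus the weight operation, i.e.\ nothing about the internal structure of $\ell_1$-averages; what yours buys is a direct quantitative statement (every block subspace contains a $2$-$\ell_1^{n_j}$ average with $\|x\|_G=O(1/n_j)$), essentially the input the paper has to reassemble anyway in Lemma \ref{L18} and Proposition \ref{P5}.

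One point you must make explicit: your estimate $|\chi_{\N}(z_i)|\le 2^{1-N}$ requires the constituents $z_i$ of the average to lie in $Y_N$ (equivalently, to be blocks of $(y_n)_{n\ge N}$). The statement of Lemma \ref{L1} only asserts that the average $x$ lies in the block subspace; if the $z_i$ were arbitrary successive vectors (for instance interval restrictions cutting through the supports of the $y_n$'s), the interior terms $\chi_{\N}(z_i)$ could be of order $1$ and your bound on $\|x\|_G$ would fail. The fix is standard: the proof of Lemma \ref{L1} (Lemma II.22 of \cite{ArTo}) produces the $z_i$ as normalized sums of consecutive terms of the generating block sequence, hence in $Y_N$, so either quote that constructive form or phrase your interval estimate for averages whose constituents are blocks of $(y_n)_{n\ge N}$. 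With that made explicit (and the routine reduction from general infinite-dimensional subspaces to block subspaces, which both you and the paper use tacitly), your proof is complete.
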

 \begin{proof}[\bf Proof.] Assume the contrary.  Then we can find
 a block subspace $Z$ of $\eqs_D$ and $\e>0$ such that $\|z\|_G\ge
 \e\|z\|$ for every $z\in Z$. Pick $(z_n)_{n\in\N}$, $(z^*_n)_{n\in\N}$
 two  normalized block sequences in $Z$, $\eqs_D^*$ respectively, such that
 $\ran z_n=\ran z_n^*$ and $z_n^*(z_n)=1$. Passing to
 subsequences we may assume that the sequence $(z_n)_{n\in\N}$ is
 weakly Cauchy in $Y_G$, hence the sequence of its successive
 differences, i.e. the sequence \seq{x}{n} defined by the rule
 $x_n=z_{2n-1}-z_{2n}$ is weakly null in $Y_G$. Thus
 $\lim\limits_{n}\chi_{\N}(x_n)=0$.
  Passing to further
 subsequences we may  assume that
 $\sum\limits_{n=1}^{\infty}|\chi_{\N}(x_n)|<\frac{1}{2}$.
 Then $\|\sum\limits_{i=1}^kx_i\|_G<\frac{9}{2}$ for every
 $k\in\N$ (see the proof of Lemma \ref{L12}).

 Let now $j\in \N$. We set
 $x=\frac{1}{n_{2j}}\sum\limits_{i=1}^{n_{2j}}x_i$ and
 $x^*=\frac{1}{m_{2j}}\sum\limits_{i=1}^{n_{2j}}z^*_{2i-1}$.
 Then $\|x^*\|\le 1$ (see Remark \ref{R3}), hence
 \[\frac{1}{n_{2j}}\cdot\frac{9}{2}\ge \|x\|_G\ge \e\cdot\|x\|
 \ge \e \cdot x^*(x)= \e\cdot \frac{1}{m_{2j}}.\]
 Since this happens for every $j$, for large $j$ we derive a
 contradiction.
  \end{proof}

   \begin{definition}\label{D7}
 Let $C\ge 1$ and $j\in\N$. A pair $(x,x^*)$ is said to be
 a $(C,2j)$ exact pair, provided that
 \begin{enumerate}
 \item[(i)] $x^*\in K$    with $w(x^*)=m_{2j}$ (i.e. $x^*\in K^{2j}$).
 \item[(ii)]  $\|x\|_G\le\frac{1}{m_{2j}^2}$, and for every $f\in D$ of type I,
 if $w(f)<m_{2j}$ then $|f(x)|\le
 \frac{3C}{w(f)}$, while if $w(f)\ge m_{2j}$ then $|f(x)|\le
 C(\frac{m_{2j}}{w(f)}+\frac{1}{m_{2j}})$ (in particular $\|x\|\le 2C$).
 \item[(iii)] $x^*(x)=1$ and $\ran x^*= \ran x$.
 \end{enumerate}
 \end{definition}

  \begin{lemma}\label{L18}
  For every  block subspace $Z$ of $\eqs_D$ and every $j\in\N$
   there exists a
  $(4,2j)$ exact pair $(z,z^*)$ with $z\in Z$.
  \end{lemma}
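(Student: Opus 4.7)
The plan is to produce $(z,z^*)$ by averaging an appropriately chosen Rapidly Increasing Sequence in $Z$ against a companion family of norming functionals, glued by the $(\mc{A}_{n_{2j}},\frac{1}{m_{2j}})$-operation from Definition \ref{D13}.

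First, I would construct a block sequence $(x_k)_{k=1}^{n_{2j}}$ in $Z$ where each $x_k$ is (after normalization) a $2$-$\ell_1^{n_{j_k}}$ average, the integers $j_1<j_2<\cdots$ are chosen so that condition (ii) of Definition \ref{D6} holds, and $\|x_k\|_G$ is so small that the resulting sequence is a $(4,\epsilon)$-RIS with $\epsilon\le 1/m_{2j}^2$ (Remark \ref{R5}(ii)). Lemma \ref{L1} supplies the $\ell_1^{n_{j_k}}$ averages in any block subspace, while Proposition \ref{P4} (the strict singularity of the identity $\eqs_D\to Y_G$), applied iteratively, enables me to pass to a normalized block basis of $Z$ with rapidly decaying $G$-norms, so that the averages drawn from its span inherit negligible $G$-norm.

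Next, for each $k$, using Remark \ref{R3} (pointwise density of $D$ in $B_{\eqs_D^*}$) together with closure of $D$ under interval restrictions, I would pick $f_k\in D$ with $\ran f_k=\ran x_k$ (after a harmless padding of the endpoints, using that $\pm\chi_I\in D$) and with $f_k(x_k)=\alpha_k$ as close to $\|x_k\|$ as desired. Then
\[
z^*:=\frac{1}{m_{2j}}\sum_{k=1}^{n_{2j}} f_k
\]
is precisely an element of $K^{2j}$ by the recipe of Definition \ref{D13}, giving (i) with $w(z^*)=m_{2j}$.

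I would then set $z:=\frac{m_{2j}}{\sum_k\alpha_k}\sum_{k=1}^{n_{2j}} x_k\in Z$, so that $z^*(z)=1$ by direct computation and $\ran z=\ran z^*$, establishing (iii). The bound $\|z\|_G\le 1/m_{2j}^2$ follows at once from the triangle inequality together with the minuscule choice of $\|x_k\|_G$. The type~I estimates in (ii) for $|f(z)|$ come from Corollary \ref{C6} applied to the RIS $(x_k)$ with $C=4$: that corollary controls $|f(\frac{1}{n_{2j}}\sum x_k)|$ by $3C/(w(f)m_{2j})$ when $w(f)<m_{2j}$ and by $C(1/w(f)+1/m_{2j}^2)$ when $w(f)\ge m_{2j}$; multiplying through by the scaling factor $m_{2j}n_{2j}/\sum_k\alpha_k$, which is essentially $m_{2j}$ when the $\alpha_k$ are close to $1$, recovers precisely the bounds of Definition \ref{D7}(ii).

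The principal obstacle is the first step: producing an $\ell_1^{n_{j_k}}$ average in $Z$ with tiny $G$-norm. Lemma \ref{L1} alone yields an average in a block subspace but provides no control on $\|\cdot\|_G$. Overcoming this requires leveraging Proposition \ref{P4} to first extract in $Z$ a normalized block basis whose $G$-norms are summably small, and then verifying that every finitely supported combination arising in the span of this basis--in particular any $2$-$\ell_1^{n_{j_k}}$ average produced by Lemma \ref{L1} inside that span--automatically inherits smallness of $\|\cdot\|_G$, via the bounded block-basis constant. Once the RIS is in hand, the remaining verifications are routine applications of Corollary \ref{C6} and the explicit construction of $D$.
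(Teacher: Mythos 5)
Your construction is essentially the paper's own proof: use Proposition \ref{P4} to pass to a block subspace of $Z$ on which the identity into $Y_G$ has tiny norm, take $2$-$\ell_1^{n_{j_k}}$ averages there via Lemma \ref{L1}, extract a $(4,\e)$ RIS via Remark \ref{R5}, pick $f_k\in D$ with $\ran f_k=\ran x_k$, form $z^*=\frac{1}{m_{2j}}\sum f_k\in K^{2j}$, rescale the average so that $z^*(z)=1$, and verify (ii) with Corollary \ref{C6}; your factor $\frac{m_{2j}n_{2j}}{\sum_k\alpha_k}$ is exactly the paper's $\theta m_{2j}$.

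Two quantitative points need tightening. First, $\e\le \frac{1}{m_{2j}^2}$ is not small enough: since $z$ is the RIS average scaled up by roughly $m_{2j}$, the triangle inequality only gives $\|z\|_G\lesssim m_{2j}\,\e\approx\frac{1}{m_{2j}}$, which misses the requirement $\|z\|_G\le\frac{1}{m_{2j}^2}$ in Definition \ref{D7}(ii); you must take $\e\le\frac{1}{m_{2j}^3}$ (the paper's choice), which your ``as small as desired'' flexibility permits. Second, choose the $f_k$ with $\alpha_k=f_k(x_k)\ge 1$ (possible since $\|x_k\|\ge 1$ and $D$ norms $\eqs_D$), not merely ``close to $\|x_k\|$'': this guarantees $\sum_k\alpha_k\ge n_{2j}$, hence the scaling factor is at most $m_{2j}$, so the bounds of Corollary \ref{C6} rescale exactly to those of Definition \ref{D7}(ii) with constant $C=4$; if the $\alpha_k$ could dip below $1$ the constant would be slightly exceeded. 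With these adjustments the argument is complete and coincides with the paper's.
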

  \begin{proof}[\bf Proof.] We set
  $\e=\frac{1}{m_{2j}^3}$.
  Using Proposition \ref{P4}, we may select a block subspace $Z'$ of
  $Z$ such  that the restriction of the identity operator $I:\eqs_D\to Y_G$
  on $Z'$ has norm less than $\frac{\e}{2}$. From Lemma \ref{L1}
  we may select a block sequence $\seq{z}{k}$ in $Z'$ with each
  $z_k$ being a $2-\ell_1^{n_{j_k}}$  average for an
  increasing sequence $(j_k)_{k\in\N}$. Then $\|z_k\|_G\le \e$ for all $k$.
   From Remark \ref{R5} we may assume,
  passing to a subsequence, that $(z_k)_{k=1}^{n_{2j}}$ is a
  $(4,\e)$ RIS.
  For each $k$ we select $z_k^*\in D$ with $\ran(z_k^*)=\ran(z_k)$
  such that $z_k^*(z_k)\ge 1$.

  We set \[x=\frac{1}{n_{2j}}\sum\limits_{k=1}^{n_{2j}}z_k \quad\mbox{
  and }\quad z^*=\frac{1}{m_{2j}}\sum\limits_{k=1}^{n_{2j}}z^*_k
  .\]
  From Corollary \ref{C6} we get that
  $\frac{1}{m_{2j}}\le z^*(x)\le \|x\|\le \frac{8}{m_{2j}}$, thus we
  may select $\frac{1}{8}\le \theta \le 1$ such that
   $z^*(\theta m_{2j} x)=1$. We set $z=\theta m_{2j} x$.
   From the fact that
   $\|z\|_G \le  m_{2j}\cdot\e=\frac{1}{m_{2j}^2}$, and using Corollary \ref{C6}, we
    easily establish condition (ii) of Definition \ref{D7}.
    Hence $(z,z^*)$ is a $(4,2j)$ exact pair.
  \end{proof}

\begin{remark}\label{R8}
Exact pairs are one of the fundamental ingredients to show that
the space $\eqs_D$ is HI. This property will be proved in the next
section. A rather direct consequence of the previous lemma is that
$\ell_1(\N)$ is not embedded in the space $\eqs_D$. To see this,
observe that Lemma \ref{L18} yields that for every $j\in\N$  there
exists a normalized finite block sequence $(w_k)_{k=1}^{n_{2j}}$
in $Z$ such that $\|\frac{w_1+w_2+\cdots+w_{n_{2j}}}{n_{2j}}\|\le
\frac{4}{m_{2j}}$.
\end{remark}

   \begin{proposition}\label{P5}
    The space $\eqs_D^*$ is the closed lineal span of the pointwise
    closure of $G$, i.e.
    $\eqs_D^*=\overline{\spann}(\overline{G}^{w^*})
        =\overline{\spann}(\{e_n^*:\; n\in\N\}\cup \{\chi_\N\})$.
     Thus the space $\eqs_D$ is quasireflexive, with its codimension in the second
     dual being equal to one.
    \end{proposition}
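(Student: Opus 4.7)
The plan is to prove the decomposition $\eqs_D^{*}=\overline{\spann}\{e_n^{*}:n\in\N\}+\R\chi_{\N}$; the quasireflexivity claim then follows by dualizing through $\eqs_D=((\eqs_D)_{*})^{*}$, which holds because the basis is boundedly complete (Remark \ref{R3}).

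Since $G\subset D$, the partial sums $\sum_{i=1}^{n}e_i^{*}=\chi_{[1,n]}$ lie in the unit ball of $\eqs_D^{*}$ and converge $w^{*}$ to $\chi_{\N}$, yielding $\chi_{\N}\in\eqs_D^{*}$ with $\|\chi_{\N}\|=1$. The same argument applied to $\chi_{[n+1,\infty)}=\chi_{\N}-\chi_{[1,n]}$ gives $\|\chi_{\N}-\sum_{i=1}^{n}e_i^{*}\|=1$, so $\chi_{\N}\notin(\eqs_D)_{*}$. Bimonotonicity of the basis identifies $(\eqs_D)_{*}=\{y^{*}\in\eqs_D^{*}:\|(I-P_n)^{*}y^{*}\|\to 0\}$, so the task reduces to showing: for every $x^{*}\in\eqs_D^{*}$ there exists $\lambda\in\R$ with $\|(I-P_n)^{*}(x^{*}-\lambda\chi_{\N})\|\to 0$.

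For this I would argue by contradiction. Assume that some $x^{*}\in\eqs_D^{*}$ with $\|x^{*}\|\le 1$ admits no such $\lambda$. Select an optimal candidate $\lambda$ (for instance as a Banach limit of $x^{*}(s_n)/n$, where $s_n=\sum_{i=1}^{n}e_i$ has norm $n$ by summing-basis domination with $C_1=1$; alternatively, by applying Rosenthal's theorem to a weakly Cauchy subsequence of $(e_n)$, which exists by Remark \ref{R8} since $\eqs_D$ omits $\ell_1$). Set $y^{*}=x^{*}-\lambda\chi_{\N}$. Then by hypothesis $\|(I-P_n)^{*}y^{*}\|\ge\delta>0$ for all $n$, and the optimality of $\lambda$ makes $y^{*}$ asymptotically orthogonal to $\R\chi_{\N}$ in the tail. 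Fix $j$ large, pick successive intervals $E_1<\cdots<E_{n_{2j}}$ with $\|E_k y^{*}\|\ge\delta/2$, and in each $E_k$ combine strict singularity of $I:\eqs_D\to Y_G$ (Proposition \ref{P4}) with the $\ell_1^{n_{j_k}}$-average construction (Lemma \ref{L1}) to produce $x_k\in\eqs_D$ supported in $E_k$ with $\|x_k\|\le C$, $\|x_k\|_G\le\e=1/m_{2j}^{3}$, and $y^{*}(x_k)\ge\delta/4$. For $j_k$ increasing sufficiently fast, $(x_k)_{k=1}^{n_{2j}}$ is a $(C,\e)$-RIS.

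By Corollary \ref{C6}, $\bigl\|\tfrac{1}{n_{2j}}\sum_{k}x_k\bigr\|\le 2C/m_{2j}$, so
\[
\frac{\delta}{4}\,n_{2j}\;\le\;\sum_{k=1}^{n_{2j}}y^{*}(x_k)\;=\;y^{*}\Bigl(\sum_{k}x_k\Bigr)\;\le\;\|y^{*}\|\cdot\frac{2C\,n_{2j}}{m_{2j}},
\]
which fails for $j$ large enough, giving the desired contradiction. Together with $\chi_{\N}\notin(\eqs_D)_{*}$, this yields the direct sum $\eqs_D^{*}=(\eqs_D)_{*}\oplus\R\chi_{\N}$, from which $\eqs_D^{**}\cong\eqs_D\oplus\R$ by dualization. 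The main obstacle is producing the vectors $x_k$ that are genuine RIS elements (small $G$-norm) while still witnessing $y^{*}(x_k)\ge\delta/4$; this forces the delicate choice of $\lambda$, since on small-$G$-norm vectors $z$ one has $|\chi_{\N}(z)|\le\|z\|_G$ tiny, so without asymptotic orthogonality of $y^{*}$ to $\R\chi_{\N}$ the mass of $y^{*}$ on the tail could be entirely absorbed by its $\chi_{\N}$-component and the lower bound on $y^{*}(x_k)$ would be unavailable.
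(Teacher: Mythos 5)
Your overall frame (reduce to the decomposition $\eqs_D^*=(\eqs_D)_*\oplus\R\chi_\N$, then dualize using bounded completeness) is fine, and the closing contradiction via Corollary \ref{C6} is exactly the right endgame. The genuine gap is the extraction step: producing vectors $x_k$ supported far out with $\|x_k\|\le C$, $\|x_k\|_G\le\e$ \emph{and} $y^*(x_k)\ge\delta/4$. First, you invoke Proposition \ref{P4} and Lemma \ref{L1} ``in each $E_k$'', but $E_k$ is a finite interval, and both results concern infinite-dimensional block subspaces; they cannot be run inside $\spann\{e_i:i\in E_k\}$. Even if you instead work in the tail subspace past $\max E_{k-1}$, the subspace furnished by Proposition \ref{P4} on which the $G$-norm is small relative to $\|\cdot\|$ comes with no control whatsoever on $y^*$: it may sit (almost) inside $\Ker y^*$, so the bound $\|E_ky^*\|\ge\delta/2$ gives you nothing about the action of $y^*$ on the small-$G$-norm vectors you can actually build. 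Your proposed remedy --- choosing $\lambda$ ``optimally'' so that $y^*$ is ``asymptotically orthogonal to $\R\chi_\N$ in the tail'' --- is precisely the point that needs proof and is never established: showing that subtracting the single direction $\chi_\N$ forces the tail mass of $y^*$ to be visible on vectors of small $G$-norm is essentially equivalent to the proposition itself, so as written the argument is circular at its crux.

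The paper resolves exactly this difficulty by a different mechanism: assuming $Z=\overline{\spann}(\overline{G}^{w^*})$ is proper, Hahn--Banach gives $x^*\notin Z$ and $x^{**}\in\eqs_D^{**}$ with $x^{**}(x^*)=2$ and $Z\subset\Ker x^{**}$; since $\ell_1\not\hookrightarrow\eqs_D$ (Remark \ref{R8}), Odell--Rosenthal realizes $x^{**}$ as the $w^*$-limit of a bounded block sequence $(x_k)$ with $x^*(x_k)>1$. Because the extreme points of $B_{Y_G^*}$ lie in $\overline{G}^{w^*}\subset\Ker x^{**}$, this sequence is automatically weakly null in $Y_G$, so convex blocks have small $G$-norm while the linear condition $x^*(\cdot)>1$ survives convex combinations; forming $\ell_1$-averages and a RIS then contradicts Corollary \ref{C6}. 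The single functional $x^{**}$ thus does the double duty (killing the $G$-directions while preserving a lower bound for $x^*$) that your choice of $\lambda$ was supposed to do but doesn't. If you want to keep your formulation, you would need to add an argument of this type (or a quantitative substitute) showing that when all tail norms of $y^*$ stay above $\delta$, one can find bounded tail vectors with $\chi_\N$-action (and hence, after convex combining, $G$-norm) small and $y^*$-action bounded below; without it the proof does not go through.
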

   \begin{proof}[\bf Proof.]
  Suppose that the space $Z=\overline{\spann}(\overline{G}^{w^*})$
  is a proper subspace of $\eqs_D^*$. Then we may select an $x^*\in
  \eqs_D^*\setminus Z$ with $\|x^*\|=1$ and an $x^{**}\in \eqs_D^{**}$ such
  that $\|x^{**}\|=2$, $x^{**}(x^*)=2$ and $ Z\subset\Ker x^{**}$.
 Since the space $\eqs_D$ contains no isomorphic copy of $\ell_1(\N)$
 (Remark \ref{R8}), from a theorem of Odell and Rosenthal  (\cite{OR}), we
 can choose $(x_k)_{k\in\N}$ with $\|x_k\|\le 2$ such that
 $x_k\stackrel{w^*}{\longrightarrow} x^{**}$. We may assume that
 \seq{x}{k} is a block sequence (since $e_n^*\in Z$ for all $n$)
 and that $x^*(x_k)>1$ for each $k$ (since $x^*(x_k)\to x^{**}(x^*)=2$).

 From the fact that $\Ext (B_{Y_G^*})\subset
 \overline{G}^{w^*}\subset Z$ we get that the sequence \seq{x}{k}
 is weakly null in $Y_G$, thus we may select a block sequence  \seq{y}{k}  of
 \seq{x}{k} such that $\|y_k\|_G\to 0$ and such that each $y_k$ is
 a convex combination of \seq{x}{k}. Passing to a subsequence
 of \seq{y}{k}, we
 may assume that $\|y_k\|_G<\e$ for all $k$ where
 $\e=\frac{1}{m_{2j}^3}$ for some fixed $j$.
 Notice also that $x^*(y_k)>1$ for all $k$. We may construct
 $(z_k)_{k\in\N}$ a  sequence of $2-\ell_1^{n_{j_k}}$ averages of increasing length
 with each $z_k$ being an average of \seq{y}{k}.
 Thus $\|z_k\|_G<\e$ for all $k$ and passing to a subsequence we
 may assume that $(z_k)_{k=1}^{n_{2j}}$ is a $(4,2j)$ RIS.
 From Corollary \ref{C6} we get that $\|z\|\le \frac{8}{m_{2j}}<1$,
 where $z=\frac{1}{n_{2j}}\sum\limits_{k=1}^{n_{2j}}z_k$. On the
 other hand the action of the functional $x^*$ entails that
  $\|z\|\ge x^*(z)>1$
 (since $z$ is a convex combination of \seq{y}{k}), a contradiction.
  \end{proof}

\section{Dependent sequences and the HI property of $\eqs_{D}$}

 In this section we define the dependent sequences which will be
 used for the proof that the spaces $\eqs_D$, $\eqs_D^*$ are HI.
 The main goal is to prove Lemma \ref{L15} and then to use the
 basic inequality to obtain upper estimates of the norm for
 certain averages of block sequences (Proposition \ref{P3}).
 As a consequence, we obtain that the space $\eqs_D$ is Hereditarily
 Indecomposable.

 \begin{definition}\label{D12} Let $C\ge 1$ and $j\ge 2$.
 A double sequence $\chi=(x_k,x_k^*)_{k=1}^{n_{2j-1}}$ is said to be a $(C,2j-1)$
 dependent sequence provided there exists a sequence
 $(j_k)_{k=1}^{n_{2j-1}}$ such that
 \begin{enumerate}
 \item[(i)] The sequence $(x_k^*)_{k=1}^{n_{2j-1}}$ is
 a $2j-1$ special sequence (Definition \ref{D8})
 with $w(x_k^*)=m_{2j_k}$ for each $k$.
 \item[(ii)] Each $(x_k,x_k^*)$ is a $(C,2j_k)$ exact pair.
 \end{enumerate}
 For $\chi$ as above we shall denote by $\phi_\chi$ the functional
 $\phi_\chi=\frac{1}{m_{2j-1}}\sum\limits_{k=1}^{n_{2j-1}}x_k^*$.
 Observe that $\phi_\chi\in K^{2j-1}\subset D$ (Remark \ref{R6}(iii)), hence $\|\phi_{\chi}
 \|\le 1$.
 \end{definition}

 \begin{remark}\label{R24}
  From Definitions \ref{D8}, \ref{D7} and \ref{D12} and the growth condition
  concerning the coding function $\sigma$, it follows that when
  $(x_k,x_k^*)_{k=1}^{n_{2j-1}}$ is a $(C,2j-1)$ dependent sequence,
  the sequence
  $(x_k)_{k=1}^{n_{2j-1}}$ is a $(3C,\frac{1}{m_{2j-1}^2})$ RIS.
 \end{remark}

 \begin{lemma}\label{L13}
 Let $f\in D$ and  let $\chi=(x_k,x_k^*)_{k=1}^{n_{2j-1}}$ be a
 $(C,2j-1)$  dependent sequence.
 Then for any subinterval $I$  of the interval $\{1,2,\ldots,n_{2j-1}\}$,
 we have that
 \[ |(f\cdot \phi_{\chi})\big(\sum\limits_{k\in
 I}(-1)^{k+1}x_k\big)|\le \frac{4C}{m_{2j-1}^2}\cdot\#(I)+2C\cdot Q_{2j-1}.  \]
 \end{lemma}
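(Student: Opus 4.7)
The first move is to exploit the range alignment in the dependent sequence. Since $(x_k,x_k^*)$ is an exact pair with $\ran x_k^* = \ran x_k$, and the $x_k^*$ are successive (being part of a $2j-1$ special sequence), any mixed term $(f\cdot x_l^*)(x_k)$ with $l\neq k$ vanishes. Combined with $\phi_\chi=\frac{1}{m_{2j-1}}\sum_k x_k^*$, this collapses the left-hand side to
\[
(f\cdot\phi_\chi)\Bigl(\sum_{k\in I}(-1)^{k+1}x_k\Bigr)
=\frac{1}{m_{2j-1}}\sum_{k\in I}(-1)^{k+1}(f\cdot x_k^*)(x_k).
\]
By rational convexity of $D$ it suffices to treat $f\in M$, i.e.\ a pointwise product of weighted functionals in $K$.

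For each $k\in I$, set $g_k:=f\cdot x_k^*\in M$. Since $x_k^*\in K^{2j_k}$ has weight $m_{2j_k}$, Proposition \ref{C1} applied to the product $g_k$ furnishes a representation $g_k=\frac{1}{w(f)m_{2j_k}}\sum_{i=1}^{d_k}h_i^k$ with $h_i^k\in D$ successive and $d_k$ controlled by the weights of $f$ and of $x_k^*$. Combining this with the exact pair property, which controls $|h(x_k)|$ for type-I functionals $h$ in terms of $w(h)$, together with the smallness of $\|x_k\|_G$, one aims to extract a per-term estimate of the form $|g_k(x_k)|\le 4C/m_{2j-1}$ for generic $k$. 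Summing and dividing by $m_{2j-1}$ then contributes the $\tfrac{4C}{m_{2j-1}^2}\#(I)$ piece of the claimed bound.

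The delicate step concerns indices $k$ where no such per-term estimate is available. The cleanest illustration is $f=\chi_E$ for an interval $E$: there $(x_k^*|_E)(x_k)$ equals $1$ on each $k$ with $\ran x_k\subset E$, equals $0$ on each $k$ with $\ran x_k\subset E^c$, and is uncontrolled only at the two boundary indices. On such ``resonant'' indices no single-term smallness holds; one must recover smallness from cancellation across the alternating signs. The plan is to use the tree-structure of $f$ together with the growth condition on the coding $\sigma$, which forces $m_{2j_{k+1}}$ to dominate all earlier data: this limits the number of indices at which a weighted factor of $f$ can resonate with $x_k^*$ to at most $Q_{2j-1}\,m_{2j-1}$, and outside this resonant set the alternating sum telescopes along the plateaus on which $g_k(x_k)$ is essentially constant, producing the additive $2C\,Q_{2j-1}$ term after the $\tfrac{1}{m_{2j-1}}$ prefactor.

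The principal obstacle, and the reason the argument is more involved than in prior HI constructions, is the closure of $D$ under pointwise products. In the classical setting $f$ has a single tree and $\sigma$ cleanly bounds matches with $x_k^*$; here $f$ is itself a product of weighted functionals, so $g_k=f\cdot x_k^*$ inherits a mixed tree whose weighted pieces must be tracked factor-by-factor to decide which indices are resonant. Carrying out this book-keeping so that the resonant indices collectively absorb only the $2C\,Q_{2j-1}$ term while the remaining indices all admit the $4C/m_{2j-1}$ bound is the technical core of the proof, and requires an analogue of the basic-inequality analysis adapted to products rather than to averages.
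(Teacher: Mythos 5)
Your opening reduction is correct and clean: since $\ran x_l^*=\ran x_l$ and the $x_k$ are a block sequence, $(f\cdot x_l^*)(x_k)=0$ for $l\neq k$, so the quantity to bound is $\frac{1}{m_{2j-1}}\big|\sum_{k\in I}(-1)^{k+1}(f\cdot x_k^*)(x_k)\big|$, and linearity lets you pass to $f\in M$. But from that point on the proposal is a plan rather than a proof, and its two load-bearing claims are not justified. First, the per-term bound $|(f\cdot x_k^*)(x_k)|\le 4C/m_{2j-1}$ ``for generic $k$'' cannot be extracted from Proposition \ref{C1} plus the exact-pair property alone: writing $f\cdot x_k^*=\frac{1}{w(f)m_{2j_k}}\sum_i h_i^k$ only helps when the \emph{global} weight $w(f)$ is large, whereas what matters is the \emph{local} structure of $f$ over $\ran x_k$ (e.g.\ $f=\frac{1}{m_{2j'}}\sum_i f_i$ with $f_i=\pm\chi_{E_i}$ sitting over $\ran x_k$ gives $(f\cdot x_k^*)(x_k)\approx 1/m_{2j'}$, which is not $O(1/m_{2j-1})$ when $2j'<2j-1$). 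The paper handles this by fixing a tree of $f$, introducing the accumulated weights $w_a$, cutting the tree at the antichain $\mc{D}_0\cup\mc{D}_1\cup\mc{D}_2$ (accumulated weight $\ge m_{2j-1}$, own weight $\ge m_{2j-1}$, or type~$0$), and treating separately the straddling indices $I_2$; nothing in your sketch plays this role. Moreover, reducing to $f\in M$ does not remove convex combinations, which reappear at deeper levels of the tree; the paper needs a Lemma \ref{L22}-type extraction (properties (v)--(viii)), producing functionals outside $D$ but inside the unconditional frame, before any counting is possible.

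Second, your count of ``resonant'' indices via the growth condition on the coding $\sigma$ is both the wrong tool and false as stated. In this lemma $\phi_\chi$ is built from the very functionals $x_k^*$ of the dependent sequence, so no $\sigma$-injectivity or growth argument is available or needed (that mechanism belongs to Lemma \ref{L15}, where $\phi$ is an arbitrary weight-$m_{2j-1}$ special functional). And no bound like $Q_{2j-1}m_{2j-1}$ on the number of resonant \emph{indices} can hold: for $f=\chi_{\N}$ every $k\in I$ satisfies $(f\cdot x_k^*)(x_k)=1$, and $n_{2j-1}\gg Q_{2j-1}m_{2j-1}$. What is actually bounded by $Q_{2j-1}$ is the number of terminal pieces of the pruned tree (equivalently, of plateaus $I_\beta$), and this comes from the purely structural facts that each branch carries at most $\log_2(m_{2j-1})$ type-I nodes of accumulated weight $<m_{2j-1}$ and that each such node has at most $n_{2j-2}\log_2(m_{2j-1})$ immediate successors (Corollary \ref{C2}); the alternating-sign cancellation is then applied plateau-by-plateau to the type-$0$ pieces only, while the $\mc{D}_1$, $\mc{D}_2$ pieces and the straddling set are estimated by weights and by the $2$-per-piece intersection count. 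Since your proposal explicitly defers exactly this book-keeping (``the technical core''), the statement is not yet proved.
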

 \begin{proof}[\bf Proof.] We fix a
 tree $(f_a)_{a\in\mc{A}}$ of the functional $f$ (Definition
 \ref{D4}). For each $a\in\mc{A}$ we set
 $w_a=\prod\{w(f_{\beta}): \;\beta\in \mc{A},\; \beta\prec
 a,\;f_{\beta} \mbox{ is of type I }\}$. We consider the following
 subsets of $\mc{A}$.
 \begin{eqnarray*}
  \mc{D}_0&= &\{a\in\mc{A}:\; f_a \mbox{ is of type 0 and }w_a<m_{2j-1}\;\}\\
  \mc{D}_1& =&\{a\in\mc{A}:\; f_a \mbox{ is of type I, }w_a<m_{2j-1} \mbox{ and }w(f_a)\ge m_{2j-1}\;
  \}\\
  \mc{D}_2&=&\{a\in\mc{A}:\; f_a \mbox{ is of type 0 or of type I, }w_a\ge m_{2j-1} \mbox{ and }\\
       & &\mbox{ for every }\beta\prec a \mbox{ with $f_\beta$ of type I, }
              w_\beta<m_{2j-1} \mbox{ and } w(f_{\beta})\le
              m_{2j-1}\}.
  \end{eqnarray*}
  We set $\mc{D}=\mc{D}_0\cup\mc{D}_1\cup\mc{D}_2$ and
  $\mc{B}=\{\beta\in\mc{A}:\mbox{ there exists }\delta\in\mc{D} \mbox{ with }
  \beta \preceq \delta\}$.
  The following properties hold.
  \begin{enumerate}
  \item[(i)] The sets $\mc{D}_0$, $\mc{D}_1$, $\mc{D}_2$ are
  pairwise disjoint.
  \item[(ii)] The nodes of  $\mc{D}$ are pairwise incomparable
  with respect to the order of $\mc{A}$. Moreover, the set $\mc{D}$
  is a  maximal subset of $\mc{A}$ with this property.
  In order to see the later it is enough to show that every branch
  of the tree $\mc{A}$ contains a member of $\mc{D}$. Indeed, fix
  a branch of $\mc{A}$ and let $a_0\prec a_1\prec \cdots\prec a_k$
  be the members of this branch for which the corresponding
  functionals are of type I or of type 0. If the set
  $\big\{i\in\{0,1,\ldots,k\}:\; w_{a_i} \ge m_{2j-1}\mbox{ or
  }w(f_{a_i})\ge m_{2j-1}\big\}$ is nonempty and $i_0$ is its
  minimum then $a_{i_0}\in\mc{D}$, while if this set is
  empty then $a_k\in\mc{D}$.\\
  Thus
  $\mc{B}$ is a complete subtree of $\mc{A}$, while a node
  $\beta\in\mc{B}$ is maximal of $\mc{B}$ if and only if
  $\beta\in\mc{D}$.
  \item[(iii)] For every $\delta\in\mc{D}$ the set
  $\{\beta\in\mc{B}:\;\beta \prec \delta \mbox{ and $f_{\beta}$  is of type I}
  \}$ has cardinality at most $\log_2(m_{2j-1})$.
 Indeed, let $\{\beta_0\prec \beta_1\prec\cdots\prec \beta_{d-1}\}$ be the later set.
  Then $w_{\beta_{d-1}}<m_{2j-1}$ and since
  $w_{\beta_{d-1}}=w(f_{\beta_0})\cdot\ldots\cdot
  w(f_{\beta_{d-2}})\ge 2^{d-1}$, it follows that
  $d-1<\log_2(m_{2j-1})$, therefore $d\le \log_2(m_{2j-1})$.
  \item[(iv)] For every $\beta\in\mc{B}$ with
  $\beta\not\in\mc{D}$, such that  the functional $f_{\beta}$ is of type
  I, it holds that $w(f_{\beta})<m_{2j-1}$. It follows from Corollary
  \ref{C2} that the node $\beta\in\mc{B}$ has at most
  $n_{2j-2}\log_2(m_{2j-1})$ immediate successors.
  \end{enumerate}

  Using arguments similar to those in the proof of Lemma
  \ref{L22},
  we may prove the following. The functional $f$ takes the form
  $f=\sum\limits_{i=1}^d\lambda_if_i$ as a convex combination, such
  that for each $i$ there exists a family of functionals
   $(f^i_{\beta})_{\beta\in\mc{B}^i}$, indexed by a finite tree
   $\mc{B}^i$ and an order preserving map
   $\Phi^i:\mc{B}^i\to\mc{B}$ with the following properties.

   \begin{enumerate}
   \item[(v)] The tree $\mc{B}^i$ has a unique root
  $0\in\mc{B}^i$  and  $f_0^i=f_i$.
  \item[(vi)] For every maximal node $\beta\in\mc{B}^i$, $\Phi^i(\beta)$ is
  a maximal node of $\mc{B}$ (and thus $\Phi^i(\beta)\in\mc{D}$) and
 $f^i_\beta=f_{\Phi^i(\beta)}$. In particular, $f^i_\beta\in D$.
 \item[(vii)] For every   $a\in\mc{B}^i$ which is non-maximal, the
 functionals  $f^i_a$ and $f_{\Phi^i(a)}$ are of type I with
  $w(f^i_a)=w(f_{\Phi^i(a)})$ and  $\# S^i_a=\# S_{\Phi^i(a)}$, where
  $S^i_a$ is the set of immediate
 successors of  $a\in\mc{B}^i$ and  $S_\gamma$ denotes the set of immediate
 successors for a $\gamma\in \mc{B}$.
  Thus
  $f^i_a=\frac{1}{w(f^i_a)}(f^i_{\beta_1}+\cdots+f^i_{\beta_d})$
  with $f^i_{\beta_1}<\cdots<f^i_{\beta_d}$ and
  $d<n_{2j-2}\log_2(m_{2j-1})$ (see property (iv) above).
   Moreover, defining
  $w_\beta^i=\prod\{w(f^i_a):\;a\in\mc{B}^i, \;a\prec \beta\}$ for
  each $\beta\in\mc{B}^i$, we have that $w^i_\beta=w_{\Phi^i(\beta)}$.
 \item[(viii)] Denoting by $\mc{B}^i_{\max}$ the set of maximal nodes of
 the tree $\mc{B}^i$,  the functionals $(f^i_{\beta})_{\beta\in\mc{B}^i_{\max}}$
 are successive and
 $f=\sum\limits_{\beta\in\mc{B}^i_{\max}}\frac{1}{w^i_{\beta}}f^i_{\beta}$.
 \end{enumerate}
 We notice that for $a\in\mc{B}^i$ which is non-maximal, the
 functional $f_\beta^i$ may not belong to the norming set $D$ of the space $\eqs_D$,
  but  certainly belongs to the unconditional frame of the space,
  i.e. to the minimal subset of $c_{00}(\N)$ which contains $\{\pm
  e_n^*:\;n\in\N\}$, is closed under the $(\mc{A}_{n_j},\frac{1}{m_j})$ operations
  and is rationally convex.

  From properties (iii), (iv), (vii) and the fact that the map
  $\Phi^i$ is order preserving we get the following.
  \begin{enumerate}
  \item[(ix)] The cardinality of the   set $\mc{B}^i_{\max}$
   of maximal nodes of  $\mc{B}^i$ does not exceed the number
    $Q_{2j-1}=(n_{2j-2}\log_2(m_{2j-1}))^{\log_2(m_{2j-1})}$.
  \end{enumerate}

  Since the functional $f$ equals to the convex combination
  $\sum\limits_{i=1}^d\lambda_if_i$, in order to prove the lemma,
  it is enough to show that for each $i\in\{1,\ldots,d\}$, it holds that
  \begin{equation}\label{ineq1} |(f_i\cdot \phi_{\chi})\big(\sum\limits_{k\in
 I}(-1)^{k+1}x_k\big)|\le \frac{4C}{m_{2j-1}^2}\cdot\#(I)+2C\cdot Q_{2j-1}.
 \end{equation}
  We fix $i\in\{1,\ldots,d\}$ and  we set $E_\beta=\ran(f^i_\beta)$ for each
 $\beta\in\mc{B}^i_{\max}$. We partition the interval
 $I$ as follows
 \[I_1=\{k\in I:\; E_{\beta}\cap\ran(x_k)\neq \emptyset\text{ for
 at most one }\beta\in\mc{B}^i_{\max}\}\]
  \[I_2=\{k\in I:\; E_{\beta}\cap\ran(x_k)\neq \emptyset\text{ for
 at least two }\beta\in\mc{B}^i_{\max}\}.\]
  For each $\beta\in\mc{B}^i_{\max}$ we set
 $I_\beta=\{k\in I_1:\; E_{\beta}\cap\ran(x_k)\neq
 \emptyset\}$. We observe the following.
  \begin{enumerate}
  \item[(x)]
   Each $I_\beta$ is an interval and the intervals
   $(I_{\beta})_{\beta\in \mc{B}^i_{\max}}$ are pairwise disjoint.
 \item[(xi)]  For each $\beta\in\mc{B}^i_{\max}$ we have that
 $E_{\beta}\cap\ran(x_k)\neq \emptyset$ for at most two $k\in
 I_2$.
  \end{enumerate}
  For $p=0,1,2$ we set
  $\mc{B}^{i,p}_{\max}=\{a\in\mc{B}^i_{\max}:\;\Phi^i(a)\in\mc{D}_p\}$.
  Observe that the sets
  $\mc{B}^{i,0}_{\max},\mc{B}^{i,1}_{\max}, \mc{B}^{i,2}_{\max}$ form a partition of
  $\mc{B}^i_{\max}$.
  The proof of \eqref{ineq1} (and the proof of the whole lemma) will
  be complete after showing the following.

 \begin{eqnarray}
 \label{ineq2}|(f_i\cdot \phi_{\chi})(\sum\limits_{k\in I_2}(-1)^{k+1}x_k)|& \le & C\cdot
 Q_{2j-1}\\
 \label{ineq3}  |\big((\sum\limits_{\beta\in\mc{B}^{i,2}_{\max}}\frac{1}{w^i_{\beta}}f^i_{\beta})
 \cdot \phi_{\chi}\big)(\sum\limits_{k\in I_1}(-1)^{k+1}x_k)| &\le
 &  \frac{2C}{m_{2j-1}^2}\cdot\#(I)\\
  \label{ineq4} |\big((\sum\limits_{\beta\in\mc{B}^{i,1}_{\max}}\frac{1}{w^i_{\beta}}f^i_{\beta})
 \cdot \phi_{\chi}\big)(\sum\limits_{k\in I_1}(-1)^{k+1}x_k)|&\le&
 \frac{2C}{m_{2j-1}^2}\cdot\#(I)\\
  \label{ineq5} |\big((\sum\limits_{\beta\in\mc{B}^{i,0}_{\max}}\frac{1}{w^i_{\beta}}f^i_{\beta})
 \cdot \phi_{\chi}\big)(\sum\limits_{k\in I_1}(-1)^{k+1}x_k)|&\le&
 C\cdot Q_{2j-1}.
 \end{eqnarray}

 Let us first prove \eqref{ineq2}.
  From properties (viii), (ix), (xi),    the fact that
 $\|x_k\|\le 2C$ and since $f^i_{\beta}\cdot x_k^* \in D$ for each
  $\beta\in \mc{B}^i_{\max}$ and each $k$, we get
 that
 \begin{eqnarray*}
 |(f_i\cdot \phi_{\chi})(\sum\limits_{k\in I_2}(-1)^{k+1}x_k)| & \le &
   \sum\limits_{\beta\in\mc{B}^i_{\max}}\frac{1}{w^i_{\beta}}|(f^i_{\beta}
   \cdot \phi_{\chi})(\sum\limits_{k\in I_2}(-1)^{k+1}x_k)|\\
   & \le & \frac{1}{m_{2j-1}}\sum\limits_{\beta\in\mc{B}^i_{\max}}
   \sum\limits_{k\in I_2}|( f^i_{\beta}\cdot x_k^*)(x_k)|\\
   &\le& \frac{1}{m_{2j-1}}
   \cdot\#(\mc{B}^i_{\max})\cdot 2\cdot\max\limits_k\|x_k\|\\
   &\le &   \frac{4C\cdot Q_{2j-1}}{m_{2j-1}}\le C\cdot Q_{2j-1}.
 \end{eqnarray*}

 We pass to the proof of \eqref{ineq3}.
  From (vii) and from the definitions of $\mc{B}_{\max}^{i,2}$, $\mc{D}_2$ we get
 that for $k\in\mc{B}_{\max}^{i,2}$ it holds that
 $w^i_{\beta}=w_{\Phi^i(\beta)}\ge m_{2j-1}$.
 From this and from the fact that the sets $(I_{\beta})_{\beta\in\mc{B}^i_{\max}}$ are
 pairwise disjoint subsets of $I$, we get that
 \begin{eqnarray*}
 |\big((\sum\limits_{\beta\in\mc{B}^{i,2}_{\max}}\frac{1}{w^i_{\beta}}f^i_{\beta})
 \cdot \phi_{\chi}\big)(\sum\limits_{k\in I_1}(-1)^{k+1}x_k)| &
 \le &
 \sum\limits_{\beta\in\mc{B}^{i,2}_{\max}} \frac{1}{w^i_{\beta}}\frac{1}{m_{2j-1}}
 \sum\limits_{k\in I_1}
 | (f^i_{\beta}\cdot x_k^*)(x_k)|  \\
 & \le & \frac{1}{m_{2j-1}^2}\sum\limits_{\beta\in\mc{B}^{i,2}_{\max}}\sum\limits_{k\in
 I_{\beta}}\|x_k\| \\
 & \le &
 \frac{1}{m_{2j-1}^2}\cdot\#(\bigcup\limits_{\beta\in\mc{B}^{i,2}_{\max}}I_{\beta})\cdot2C\\
 & &
 \le \frac{2C}{m_{2j-1}^2}\cdot\#(I).
 \end{eqnarray*}

 Next we show \eqref{ineq4}.
 From the fact that each $(x_k,x_k^*)$ is
 a $(C,2j_k)$ exact pair,
 it follows that
  for every $\beta\in\mc{B}^{i,1}_{\max}$ we have that
 $|(f^i_{\beta}\cdot x_k^*)(x_k)|\le
 C(\frac{ m_{2j_k}}{w(f^i_\beta) w(x_k^*)}+\frac{1}{m_{2j_k}})
  \le C (\frac{ m_{2j_k}}{m_{2j-1} m_{2j_k}}+\frac{1}{m_{2j_k}})
 \le\frac{2C}{m_{2j-1}}$
 (we have used that  $w(f^i_\beta)=w(f_{\Phi^i({\beta})})\ge m_{2j-1}$
  as follows from (vii) and from the  definitions of $\mc{B}_{\max}^{i,1}$, $\mc{D}_1$).
  This implies that
 \begin{eqnarray*}
 |\big((\sum\limits_{\beta\in\mc{B}^{i,1}_{\max}}\frac{1}{w^i_{\beta}}f^i_{\beta})
 \cdot \phi_{\chi}\big)(\sum\limits_{k\in I_1}(-1)^{k+1}x_k)| &
 \le &
 \sum\limits_{\beta\in\mc{B}^{i,1}_{\max}}
 \sum\limits_{k\in I_{\beta}}\frac{1}{m_{2j-1}}
 | (f^i_{\beta}\cdot x_k^*)(x_k)|   \\
 & \le & \frac{1}{m_{2j-1}}\sum\limits_{\beta\in\mc{B}^{i,1}_{\max}}\sum\limits_{k\in
 I_{\beta}} \frac{2C}{m_{2j-1}}\\
 & \le &
  \frac{2C}{m_{2j-1}^2}\cdot\#(I).
 \end{eqnarray*}

 Finally, we shall show \eqref{ineq5}.
 For each $\beta\in\mc{B}^{i,0}_{\max}$
  the functional $f^i_{\beta}$ is of the form
 $f^i_{\beta}=\pm \chi_{J_{\beta}}$  for some interval $J_{\beta}$.
 Taking into account that $I_{\beta}$ is an interval and that $x_k^*(x_k)=1$ for each $k$,
 setting $k_1=\min I_\beta$, $k_2=\max I_{\beta}$ and $I_{\beta}'=I_{\beta}\setminus\{k_1,k_2\}$,
  we get that
 \begin{eqnarray*}
  |( f^i_{\beta}\cdot\phi_{\chi})(\sum\limits_{k\in
 I_\beta}(-1)^{k+1}x_k)| &\le & \frac{1}{m_{2j-1}}\big(\|x_{k_1}\|+\big|(\sum\limits_{k\in
 I_{\beta}'}x_k^*)
  (\sum\limits_{k\in
 I_{\beta}'}(-1)^{k+1}x_k)\big|\\
 & & \qquad\qquad\qquad\qquad\qquad\qquad  +\|x_{k_2}\|\big)\\
   & \le & \frac{1}{m_{2j-1}}\cdot(2C+1+2C)=\frac{4C+1}{m_{2j-1}}.
  \end{eqnarray*}
   Hence,
 \begin{eqnarray*}
 |\big((\sum\limits_{\beta\in\mc{B}^{i,0}_{\max}}\frac{1}{w^i_{\beta}}f^i_{\beta})
 \cdot \phi_{\chi}\big)(\sum\limits_{k\in I_1}(-1)^{k+1}x_k)| &
 \le &
 \sum\limits_{\beta\in\mc{B}^{i,0}_{\max}}\frac{1}{w^i_{\beta}}
 |( f^i_{\beta}\cdot\phi_{\chi})(\sum\limits_{k\in
 I_\beta}(-1)^{k+1}x_k)|\\
  &\le & \frac{4C+1}{m_{2j-1}}\cdot\#(\mc{B}^{i,0}_{\max})\le C\cdot Q_{2j-1}.
 \end{eqnarray*}

 From \eqref{ineq2}, \eqref{ineq3}, \eqref{ineq4}, \eqref{ineq5}
 we get \eqref{ineq1} and this completes the proof of the lemma.
 \end{proof}

  With the next lemma we pass from the action of   products of the form $f\cdot
 \phi_{\chi}$ for $f\in D$ on the vector $\sum\limits_{k\in
 I}(-1)^{k+1}x_k$,
  to the  action of $f\cdot \phi$ for $f\in D$ and arbitrary $\phi\in K$
  with
 $w(\phi)=m_{2j-1}$ on the same vector.

 \begin{lemma}\label{L15}
 Let  $f\in D$, let $\phi\in K^{2j-1}$  (which means that $\phi$ is
 of type I with $w(\phi)=m_{2j-1}$) and let $\chi=(x_k,x_k^*)_{k=1}^{n_{2j-1}}$
   be a $(C,2j-1)$ dependent sequence. Then for
  every  subinterval $I$  of the interval $\{1,2,\ldots,n_{2j-1}\}$, we have
 that \[ |(f\cdot \phi)\big(\sum\limits_{k\in
 I}(-1)^{k+1}x_k\big)|\le
                           \frac{5C}{m_{2j-1}^2}\cdot\#(I)+3C\cdot Q_{2j-1}.  \]
 \end{lemma}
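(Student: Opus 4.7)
The plan is to reduce the statement to Lemma \ref{L13} by a structural comparison between $\phi$ and $\phi_\chi$, exploiting the injective Gowers--Maurey coding $\sigma$ and its growth condition. Write the defining $2j-1$-special sequence of $\phi$ as $\phi = \frac{1}{m_{2j-1}}\sum_{i=1}^{n_{2j-1}} g_i$, and let $k_0$ be the smallest index with $g_{k_0} \ne x_{k_0}^*$, setting $k_0 = n_{2j-1}+1$ if no such index exists. If $k_0 > n_{2j-1}$, then $\phi = \phi_\chi$ and Lemma \ref{L13} gives the conclusion with strictly sharper constants.

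In the principal case $k_0 \le n_{2j-1}$, two structural features are at hand. First, minimality of $k_0$ forces $g_i = x_i^*$ for every $i < k_0$. Second, for $i > k_0$ the injectivity of $\sigma$ together with the growth condition $m_{\sigma(h_1,\ldots,h_{i-1})} > \max\supp h_{i-1}$ forces $w(g_i)$ to exceed $\max\supp g_{i-1}$, and hence to grow enormously past the first point of divergence. Decompose $\phi = \phi^{-} + g_{k_0}/m_{2j-1} + \phi^{+}$, where $\phi^{-} = \frac{1}{m_{2j-1}}\sum_{i<k_0}g_i$ and $\phi^{+} = \frac{1}{m_{2j-1}}\sum_{i>k_0}g_i$, and bound $(f\cdot\phi)(v)$ by the sum of the three corresponding pieces, where $v = \sum_{k \in I}(-1)^{k+1}x_k$.

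The $\phi^{-}$ piece is handled by replaying the tree-and-cases analysis of Lemma \ref{L13} on the truncated sequence $(x_i, x_i^*)_{i < k_0}$: every step of that proof operates on subintervals of special-sequence indices, so the truncation is transparent and the bound $\frac{4C}{m_{2j-1}^2}\#(I) + 2CQ_{2j-1}$ is recovered. The $\phi^{+}$ piece is handled by the same tree decomposition of $f$, now using that for $i > k_0$ the weight $w(g_i)$ is so large that the role played in Lemma \ref{L13} by the cancellation identity $\bigl(\sum_{k \in I'} x_k^*\bigr)\bigl(\sum_{k \in I'}(-1)^{k+1}x_k\bigr) \le 1$ is replaced by the pointwise exact-pair bound $|g_i(x_k)| \le C(m_{2j_k}/w(g_i) + 1/m_{2j_k})$ when $w(g_i) \ge m_{2j_k}$, or $|g_i(x_k)| \le 3C/w(g_i)$ otherwise; in either case the value is comfortably smaller than $C/m_{2j-1}$. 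Summing as in Lemma \ref{L13} produces a contribution of at most $\frac{C}{m_{2j-1}^2}\#(I) + CQ_{2j-1}$. The isolated middle term $|(f \cdot g_{k_0}/m_{2j-1})(v)|$ is bounded by separating the $x_k$'s whose range lies inside $\ran g_{k_0}$ (treated through the exact-pair estimate applied to the type-I functional $g_{k_0}$) from the at-most-two boundary terms (treated crudely by $\|x_k\| \le 2C$), yielding again a bound of order $\frac{C}{m_{2j-1}^2}\#(I) + CQ_{2j-1}$.

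The main obstacle is the careful combinatorial bookkeeping required to combine the tree decomposition of $f$ with the $k_0$-splitting of $\phi$. Concretely, one must replicate the three-way partition $\mc{B}^{i,0}_{\max}, \mc{B}^{i,1}_{\max}, \mc{B}^{i,2}_{\max}$ of Lemma \ref{L13} simultaneously for each of $\phi^{-}, g_{k_0}/m_{2j-1}, \phi^{+}$, and verify that the coding growth of $\sigma$ propagates through the tree analysis for $i > k_0$ in precisely the way the weight hierarchy $w(f_a) \ge m_{2j-1}$ did in the earlier proof. Once this is in place, summing the three bounds and absorbing constants yields the stated estimate $\frac{5C}{m_{2j-1}^2}\#(I) + 3CQ_{2j-1}$.
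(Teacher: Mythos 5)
Your decomposition of $\phi$ at the first point of divergence is essentially the paper's own splitting $\phi=\phi_1+\phi_2$ (your $k_0$ is the paper's $r$), and your treatment of the agreeing part is sound, if less economical than the paper's: rather than re-running the proof of Lemma \ref{L13} for a truncated sequence, the paper simply observes that $f\cdot\phi_1=h\cdot\phi_\chi$ with $h=\chi_{[\min E,\max\supp x_{r-1}^*]}\cdot f\in D$ (note also that elements of $K^{2j-1}$ carry an interval restriction $E$, which you dropped), so Lemma \ref{L13} applies as a black box and yields the $\frac{4C}{m_{2j-1}^2}\#(I)+2CQ_{2j-1}$ bound.

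The genuine gap is in your treatment of $\phi^{+}$ and of the middle term. What acts on $x_k$ is not $g_i$ but $f\cdot g_i$, a functional of weight $w(f)\,w(g_i)$, and the coding $\sigma$ controls only the position of $w(g_i)$ relative to the weights $m_{2j_k}=w(x_k^*)$, not that of the product. When $m_{2j_k}$ is comparable to $w(f)\,w(g_i)$ the exact-pair estimate $C\big(\frac{m_{2j_k}}{w(f)w(g_i)}+\frac{1}{m_{2j_k}}\big)$ is of order $C$, not ``comfortably smaller than $C/m_{2j-1}$''; already for $f=\chi_I$ and $k=k_0$ one has $w(g_{k_0})=w(x_{k_0}^*)=m_{2j_{k_0}}$ and the bound is about $C$. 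This is exactly why the paper fixes $p$ with $w(x_{p-1}^*)<w(f)\le w(x_p^*)$, excludes the at most three resonant indices $k\in I\cap\{p-1,p,r\}$, and bounds those crudely by $\|x_k\|\le 2C$ (producing the $6C\le CQ_{2j-1}$ term); for all other $k$ it uses that the relevant weights are separated by at least a factor $m_{2j_k}^4$ (since $m_{l+1}=m_l^5$ and, by injectivity of $\sigma$, the weights $w(f_i)$, $i>r$, avoid the set $\{w(x_k^*)\}$), which is what makes the sum over the up to $n_{2j-1}$ terms $g_i$, together with the prefactor $\frac{1}{m_{2j-1}}$, come out as $\frac{C}{m_{2j-1}^2}$ per $k$. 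Your argument contains no analogue of the choice of $p$ or of the exceptional set, and even granting your per-term bound $C/m_{2j-1}$, summing over $n_{2j-1}$ indices $i$ and multiplying by $\frac{1}{m_{2j-1}}$ gives $\frac{Cn_{2j-1}}{m_{2j-1}^2}$ per $k$, far weaker than claimed; so the asserted contribution $\frac{C}{m_{2j-1}^2}\#(I)+CQ_{2j-1}$ for these pieces does not follow as written.
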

 \begin{proof}[\bf Proof.]
  We may assume, without loss of generality, that the functional
 $f$ is either of type I or of type 0 (since every
  member of $D$ is s convex combination of such functionals).
  The functional $\phi$ takes the form
 \[\phi=\frac{1}{m_{2j-1}}(Ex_t^*+x_{t+1}^*+\cdots+x_{r-1}^*+f_r+f_{r+1}+\cdots+f_d)\]
 where
 $(x_1^*,x_2^*,\ldots,x_{r-1}^*,f_r,f_{r+1},\cdots,f_{n_{2j-1}})$ is
 some $2j-1$ special sequence,
   $w(f_r)=w(x_r^*)$, $f_r\neq x_r^*$, $d\le n_{2j-1}$ and $E$ is
   an interval of the form $[m,\max\supp x_t^*]$.
   We set
   \[\phi_1=\frac{1}{m_{2j-1}}(Ex_t^*+x_{t+1}^*+\cdots+x_{r-1}^*)\quad\mbox{and}
   \quad
   \phi_2=\frac{1}{m_{2j-1}}(f_r+f_{r+1}+\cdots+f_d).\]
   We observe that $\phi_1\cdot
   f=\big(\frac{1}{m_{2j-1}}(x_1^*+x_2^*+\cdots+x_{n_{2j-1}}^*)\big)\cdot \chi_{[\min E,\max\supp
   x_{r-1}^*]}\cdot f
   =\phi_{\chi}\cdot h$ where $h=\chi_{[\min E,\max\supp
   x_{r-1}^*]}\cdot f\in D$.
  From Lemma \ref{L13} it follows that
 \begin{equation}\label{eq5}
   |(f\cdot \phi_1)\big(\sum\limits_{k\in
 I}(-1)^{k+1}x_k\big)|\le \frac{4C}{m_{2j-1}^2}\cdot\#(I)+2C\cdot Q_{2j-1}.
 \end{equation}

 We select  $p$ such that $w(x_{p-1}^*)< w(f)\le w(x_p^*)$ (the
 adaptations in the rest of the  proof are obvious if no such a $p$ exists).
 From the injectivity of the coding function $\sigma$ and the
 definition of  special functionals (Definition \ref{D8})
 we get that
 the sets $\{w(f_{r+1}),\ldots,w(f_d)\}$ and
 $\{w(x_k^*):\;k=1,\ldots,n_{2j-1}\}$ are disjoint and both are
 subsets of the set $\{m_{2i}:\;i\in\N\}$.

 Let $k\in I$, $k< p-1$. Then for every $i\in\{r,\ldots,d\}$ we
 have that $w(f\cdot f_i)\ge w(f)>  w(x_{p-1}^*)\ge m_{2j_k}^5$,
 hence, using that $(x_k,x_k^*)$ is a
 $(C,2j_k)$ exact pair, we get that
  $|(f\cdot f_i)(x_k)|\le C(\frac{m_{2j_k}}{w(f\cdot f_i)}+\frac{1}{m_{2j_k}})\le
  \frac{2C}{m_{2j_k}}$. Thus
 \[|(f\cdot \phi_2)(x_k)|\le
 \frac{1}{m_{2j-1}}\sum\limits_{i=r}^d|(f\cdot f_i)(x_k)|
 \le \frac{1}{m_{2j-1}}\cdot n_{2j-1}\cdot \frac{2C}{m_{2j_k}}\le
 \frac{C}{m_{2j-1}^2}.\]

 Let now $k\in I$, $k>p$, $k\neq r$. We observe that for $i$ such that
 $w(f_i)<m_{2j_k}$ we also have that $w(f\cdot f_i)=w(f)\cdot
 w(f_i)\le  m_{2j_p}\cdot m_{2j_k-1}<m_{2j_k}$.
  Thus setting $J^k_-=\{i:\;r\le i\le d,\; w(f_i)<m_{2j_k}\}$,
  $J^k_+=\{i:\;r\le i\le d,\; w(f_i)> m_{2j_k}\}$ and taking into account that
  $(x_k,x_k^*)$ is a
 $(C,2j_k)$ exact pair, we get that
  \begin{eqnarray*}
  |(f\cdot \phi_2)(x_k)|  &\le&
  \frac{1}{m_{2j-1}}\big(\sum\limits_{i\in J^k_-}|(f\cdot f_i)(x_k)|+
             \sum\limits_{i\in J^k_+}|(f\cdot f_i)(x_k)|\big)\\
  &\le & \frac{1}{m_{2j-1}}\big(\sum\limits_{i\in J^k_-}\frac{3C}{w(f)\cdot w(f_i)}
              +
   \sum\limits_{i\in J^k_+}C(\frac{m_{2j_k}}{w(f)\cdot w(f_i)}+\frac{1}{m_{2j_k}})\big)\\
 &  \le &
  \frac{C}{m_{2j-1}}\big(\sum\limits_{i\in J^k_-}\frac{3}{w(f_i)}+m_{2j_k}
       \sum\limits_{i\in J^k_+}\frac{1}{w(f_i)}+n_{2j-1}\cdot\frac{1}{m_{2j_k}}  \big)\\
  &  \le &
 \frac{C}{m_{2j-1}^2}.
  \end{eqnarray*}
 Thus, setting $I_1=I\cap \{p-1,p,r\}$ and
 $I_2=I\setminus\{p-1,p,r\}$ we get that
 \begin{equation}\label{eq6}
   |(f\cdot \phi_2)\big(\sum\limits_{k\in
 I}(-1)^{k+1}x_k\big)|\le \sum\limits_{k\in I_1}\|x_k\|+
\sum\limits_{k\in I_2} \frac{C}{m_{2j-1}^2}\le 6C
+\frac{C}{m_{2j-1}^2}\cdot \#(I).
 \end{equation}

 From \eqref{eq5}, \eqref{eq6} we conclude that
  \[|(f\cdot \phi)\big(\sum\limits_{k\in
 I}(-1)^{k+1}x_k\big)|\le
 \frac{5C}{m_{2j-1}^2}\cdot \#(I) +3C\cdot Q_{2j-1}.\]
   \end{proof}

  \begin{proposition}\label{P3}
  If $(x_k,x_k^*)_{k=1}^{n_{2j-1}}$ is  a $(C,2j-1)$ dependent
  sequence, then
  \[
  \|\frac{1}{n_{2j-1}}\sum\limits_{k=1}^{n_{2j-1}}(-1)^{k+1}x_k\|\le
  \frac{24C}{m_{2j-1}^2}.\]
  \end{proposition}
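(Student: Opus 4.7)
The norm of $z = \frac{1}{n_{2j-1}} \sum_{k=1}^{n_{2j-1}}(-1)^{k+1} x_k$ equals $\sup\{|f(z)| : f \in D\}$ by the pointwise density of $D$ in $B_{\eqs_D^*}$ (Remark \ref{R3}), so fix an arbitrary $f \in D$. The plan is to apply the strengthened form (part (3)) of the basic inequality of Proposition \ref{P1} to the sign-twisted sequence $y_k := (-1)^{k+1} x_k$, and then control the resulting auxiliary functional via Lemma \ref{L3}.

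Set $\e = 1/m_{2j-1}^2$. Since the defining conditions of a RIS are invariant under sign changes on individual vectors, Remark \ref{R24} yields that $(y_k)_{k=1}^{n_{2j-1}}$ is an $(8C,\e)$ RIS with associated sequence $(2j_k)_k$, and $2j-1 < 2j_1$ because $m_{2j_1}^{1/2} > n_{2j-1}$ (Definition \ref{D8}). Thus with $j_0 = 2j-1$ the setting of Proposition \ref{P1} is in force.

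The key step is to verify the extra hypothesis of part (3): for every subinterval $I \subseteq \{1,\ldots,n_{2j-1}\}$ with $\#I \ge m_{2j-1}^2 Q_{2j-1}$ and every $h \in K^{2j-1} \cdot D$, one needs $|h(\sum_{k\in I} y_k)| \le 8C\e \cdot \#I$. Writing $h = \phi \cdot f_1$ with $\phi \in K^{2j-1}$ and $f_1\in D$, Lemma \ref{L15} gives
\[
|h(\textstyle\sum_{k\in I}y_k)| \le \frac{5C}{m_{2j-1}^2}\#I + 3C\,Q_{2j-1},
\]
and since the hypothesis $\#I \ge m_{2j-1}^2 Q_{2j-1}$ forces $3CQ_{2j-1} \le \frac{3C}{m_{2j-1}^2}\#I$, the right-hand side is at most $\frac{8C}{m_{2j-1}^2}\#I = 8C\e\cdot\#I$, exactly what is needed.

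Proposition \ref{P1}(3) then supplies a functional $g \in W$ with $|f(z)| \le 8C\bigl(g\bigl(\frac{1}{n_{2j-1}}\sum_{k=1}^{n_{2j-1}} e_k\bigr)+\e\bigr)$ and admitting a tree in which each type-I node of weight below $m_{2j-1}^2$ has at most $m_{2j-1}^2 Q_{2j-1}$ immediate successors. This is precisely the hypothesis of Lemma \ref{L3} applied with $j_0 = 2j-1$, which yields $g(\frac{1}{n_{2j-1}}\sum e_k) \le 2/m_{2j-1}^2$. Combining, $|f(z)| \le 8C(2/m_{2j-1}^2 + 1/m_{2j-1}^2) = 24C/m_{2j-1}^2$, and since $f \in D$ was arbitrary the desired norm estimate follows. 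The main obstacle has already been dispatched by the preparatory Lemmas \ref{L13} and \ref{L15}; the role of the present proof is simply to bring those estimates, the basic inequality, and Lemma \ref{L3} into alignment so that the sharp bound $1/m_{2j-1}^2$ emerges rather than merely $1/m_{2j-1}$.
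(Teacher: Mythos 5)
Your proof is correct and follows essentially the same route as the paper: Remark \ref{R24} to view $\big((-1)^{k+1}x_k\big)_k$ as an $(8C,1/m_{2j-1}^2)$ RIS, Lemma \ref{L15} to verify the additional hypothesis of the basic inequality (Proposition \ref{P1}(3)), and Lemma \ref{L3} to bound the resulting auxiliary functional, giving the constant $24C/m_{2j-1}^2$. Your explicit check that $2j-1<2j_1$ is a welcome detail the paper leaves implicit.
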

  \begin{proof}[\bf Proof.]
  From Remark \ref{R24} we get that the block sequence $(x_k)_{k=1}^{n_{2j-1}}$
  is a $(3C,\frac{1}{m_{2j-1}^2})$ RIS,
  hence the same holds for the sequence $\big((-1)^{k+1}x_k\big)_{k=1}^{n_{2j-1}}$.
  From Lemma \ref{L15} we  have that for every $h\in K^{2j-1}\cdot D$ and every
  subinterval $I$ of the interval $\{1,2,\ldots,n_{2j-1}\}$
  it holds that
  \[
  |h\big(\sum\limits_{k\in
 I}(-1)^{k+1}x_k\big)|\le
 \frac{5C}{m_{2j-1}^2}\cdot \#(I) +3C\cdot Q_{2j-1}.
  \]
  Hence for  $h$ and $I$ as above, with $\# (I)\ge m_{2j-1}^2 Q_{2j-1}$, we have
  that
  \[
  |h\big(\sum\limits_{k\in
 I}(-1)^{k+1}x_k\big)|\le
 8C\cdot\frac{1}{m_{2j-1}^2}\cdot \#(I).\]
    Thus the basic inequality  (Proposition \ref{P1}) with the additional assumption
  is applicable to the sequence $\big((-1)^{k+1}x_k\big)_{k=1}^{n_{2j-1}}$.

  Let $f\in D$. Then there exists $g\in W$, satisfying
   \[|f(\frac{1}{n_{2j-1}}\sum\limits_{k=1}^{n_{2j-1}}(-1)^{k+1}x_k)|
     \le 8C
     \big(g(\frac{1}{n_{2j-1}}\sum\limits_{k=1}^{n_{2j-1}}e_k)+\frac{1}{m_{2j-1}^2}\big)\]
     and such that
  the functional $g$ admits a tree
  $(g_a)_{a\in\mc{A}}$  that for every $a\in\mc{A}$
  with $g_a$ of type I and $w(g_a)<m_{2j-1}^2$, the node $a$ has at most
  $m_{2j-1}^2Q_{2j-1}$ immediate successors.
  From Lemma \ref{L3}, it follows that
  \[  |g(\frac{1}{n_{2j-1}}\sum\limits_{k=1}^{n_{2j-1}}e_k)|\le\frac{2}{m_{2j-1}^2}\]
  therefore
  \[|f(\frac{1}{n_{2j-1}}\sum\limits_{k=1}^{n_{2j-1}}(-1)^{k+1}x_k)|\le\frac{24C}{m_{2j-1}^2}.\]
  Since this happens for every $f\in D$ the conclussion follows.
    \end{proof}

   \begin{theorem}\label{th2}
  The space $\eqs_D$ is Hereditarily Indecomposable.
  \end{theorem}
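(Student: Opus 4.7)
The plan is to verify the standard HI criterion: given any two block subspaces $Y,Z$ of $\eqs_D$ and any $\delta>0$, produce $y\in Y$ and $z\in Z$ with $\|y-z\|<\delta\|y+z\|$. Fix $j\ge 2$ large enough that $\frac{96}{m_{2j-1}}<\delta$. The heart of the argument is to build a $(4,2j-1)$ dependent sequence $\chi=(z_k,z_k^*)_{k=1}^{n_{2j-1}}$ whose odd-indexed vectors lie in $Y$ and whose even-indexed vectors lie in $Z$; then $y=\frac{1}{n_{2j-1}}\sum_{k\text{ odd}} z_k$ and $z=\frac{1}{n_{2j-1}}\sum_{k\text{ even}} z_k$ will do the job.

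To construct $\chi$, first choose $j_1\in\Omega_1$ with $m_{2j_1}^{1/2}>n_{2j-1}$ and invoke Lemma \ref{L18} to obtain a $(4,2j_1)$ exact pair $(z_1,z_1^*)$ with $z_1\in Y$. Recursively, once $z_1^*,\dots,z_k^*$ have been chosen, set $2j_{k+1}=\sigma(z_1^*,\dots,z_k^*)\in\{2j:j\in\Omega_2\}$ and apply Lemma \ref{L18} again inside the block tail of $Y$ (if $k+1$ is odd) or $Z$ (if $k+1$ is even) lying strictly to the right of $\supp z_k^*$ to produce a $(4,2j_{k+1})$ exact pair $(z_{k+1},z_{k+1}^*)$. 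By Definition \ref{D12} the resulting $\chi$ is a $(4,2j-1)$ dependent sequence, and the functional $\phi_\chi=\frac{1}{m_{2j-1}}\sum_k z_k^*$ lies in $D$ with $\|\phi_\chi\|\le 1$.

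Since the $(z_k,z_k^*)$ are exact pairs with $\ran z_k^*=\ran z_k$, the ranges of the $z_k^*$ are pairwise disjoint, so $z_k^*(z_l)=\delta_{k,l}$ for all $k,l$. Therefore
\[
\|y+z\|\ge \phi_\chi(y+z)=\frac{1}{m_{2j-1}\,n_{2j-1}}\sum_{k=1}^{n_{2j-1}} z_k^*(z_k)=\frac{1}{m_{2j-1}}.
\]
On the other hand $y-z=\frac{1}{n_{2j-1}}\sum_{k=1}^{n_{2j-1}}(-1)^{k+1}z_k$, so Proposition \ref{P3} applied to $\chi$ yields
\[
\|y-z\|\le \frac{24\cdot 4}{m_{2j-1}^2}=\frac{96}{m_{2j-1}^2}\le \frac{96}{m_{2j-1}}\,\|y+z\|<\delta\,\|y+z\|,
\]
as required.

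The only nontrivial step is the recursive construction of $\chi$: one must check that at each stage it is legitimate to apply Lemma \ref{L18} (so that the next exact pair is forced to live to the right of the previously constructed ones and inside the prescribed block subspace), and that the coding $\sigma$ then produces the correct weights so $\chi$ genuinely satisfies Definition \ref{D8}. The essential quantitative work (the upper bound on the alternating average via the basic inequality and the tree analysis of products $f\cdot\phi$) has already been carried out in Lemmas \ref{L13} and \ref{L15} and in Proposition \ref{P3}, so the present statement reduces to the bookkeeping sketched above.
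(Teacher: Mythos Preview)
Your proposal is correct and follows essentially the same approach as the paper's proof: choose $j$ with $m_{2j-1}>96/\delta$, use Lemma~\ref{L18} alternately in $Y$ and $Z$ to build a $(4,2j-1)$ dependent sequence, and then combine the lower bound from $\phi_\chi$ with the upper bound from Proposition~\ref{P3}. Your write-up in fact spells out the recursive construction (choice of $j_1\in\Omega_1$, use of $\sigma$, passage to block tails) in slightly more detail than the paper does, but there is no substantive difference in strategy.
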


  \begin{proof}[\bf Proof.]
   Let $Y,Z$ be a pair of
   block subspaces of $\eqs_D$, and let $\delta>0$.
   We choose $j\in\N$ with
   $m_{2j-1}>\frac{96}{\delta}$.

   Using Lemma \ref{L18} we may inductively select a
   $(4,2j-1)$ dependent sequence
  $(x_k,x_k^*)_{k=1}^{n_{2j-1}}$ (Definition \ref{D12}) such that
  $x_{2k-1}\in Y$ and $x_{2k}\in Z$ for $1\le
  k\le\frac{n_{2j-1}}{2}$.
   We set
   \[
   y=\frac{1}{n_{2j-1}}\sum\limits_{k=1}^{n_{2j-1}/2}x_{2k-1}\in Y
  \quad\mbox{ and }\quad
   z=\frac{1}{n_{2j-1}}\sum\limits_{k=1}^{n_{2j-1}/2}x_{2k}\in
   Z.\]

 Since the  functional
 $\phi_\chi=\frac{1}{m_{2j-1}}\sum\limits_{k=1}^{n_{2j-1}}x_k^*$
 satisfies $\|\phi_\chi\|\le 1$ we get that
  $\|y+z\|\ge \phi_\chi(y+z) =\frac{1}{m_{2j-1}}$.
  On the other hand Proposition \ref{P3} implies that
  $\|y-z\|\le\frac{96}{m_{2j-1}^2}$.

  Therefore $\|y-z\|\le \delta\cdot\|y+z\|$ and this finishes the proof
  of the theorem.
  \end{proof}

  \section{The Banach algebras $\eqs_D^*$, $\mc{L}_{\diag}(\eqs_D)$ are HI}

  In this section, we initially prove that the  space
  $(\eqs_D)_*=\overline{\spann}\{e_n^*:\;n\in\N\}$ is HI.
  This, in conjunction to the fact that $\dim(\eqs_D^*/(\eqs_D)_*)=1$,
   entails that $\eqs_D^*$ is also
  HI.  From Proposition \ref{P6} we know  that the Banach algebras
   $\mc{L}_{\diag}(\eqs_D)$ and $\eqs_D^*$ are isometric,
   hence  $\mc{L}_{\diag}(\eqs_D)$ is also HI.
  We also notice (as follows from
  Remark \ref{Nrem0007}) that the algebra $\mc{K}_{\diag}(\eqs_D)$, i.e. the
  algebra
  of  compact diagonal operators on the space
  $\eqs_D$, is isometric to the space $(\eqs_D)_*$.

 \begin{definition}\label{D5} Let $C>1$ and $k\in \N$. A finitely supported
 vector $x^*\in (\eqs_D)_*$ is said to be a $C-c_0^k$ vector if $\|x^*\|\le 1$ and
 $x^*$
 takes the form $x^*=x_1^*+x_2^*+\cdots+x_k^*$ with
 $x_1^*<x_2^*<\cdots<x_k^*$ and $\|x_i^*\|\ge C^{-1}$.
 \end{definition}

 \begin{lemma}\label{L11} Let $Z$ be a block subspace of $(\eqs_D)_*$ and
 let $N\in\N$. Then there exists a block sequence
 $(x_n^*)_{n\in\N}$ in $Z$ with $\|x_n^*\|\ge 1$ such that for
 every $I\in \N^{[N]}$ and every choice of signs $(\e_i)_{i\in
 I}\in \{-1,1\}^I$ we have that $\|\sum\limits_{n\in
 I}\e_nx_n\|< 2$.\\
 (For an infinite set $L$ we denote by $L^{[N]}$
 the set of all subsets of $L$ having $N$ elements and by $[L]$
 the set of all infinite subsets of $L$.)
 \end{lemma}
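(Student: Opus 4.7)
The plan is to prove Lemma \ref{L11} by exploiting that the basis $(e_n^*)$ of $(\eqs_D)_*$ is shrinking, hence every bounded block sequence in $(\eqs_D)_*$ is weakly null, and then to use a careful selection from such a sequence to secure both the lower norm bound and the uniform upper bound on $N$-signed sums. The shrinking property follows from the fact that $(e_n)$ is a monotone Schauder basis of $\eqs_D = ((\eqs_D)_*)^*$, so the biorthogonals norm-span $\eqs_D$; equivalently, for any block sequence $(y_n^*)$ with $\supp y_n^* \subset [k_n,\infty)$ and any $x \in \eqs_D$, one has $|y_n^*(x)| \le \|P_{[k_n,\infty)}x\|\cdot\|y_n^*\| \to 0$.

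The main construction I would carry out is the following. First I would pick a normalized block sequence $(y_n^*)$ in $Z$; by the above it is weakly null. For each $n$, I would select a finitely supported $w_n\in B_{\eqs_D}$ with $\supp w_n\subset \ran(y_n^*)$ such that $y_n^*(w_n)\ge 1-2^{-n-2}$. Using weak nullity of $(y_n^*)$ in conjunction with the fact that the relevant test-vectors $w_m$ live in $\eqs_D$, I would pass to a sparse subsequence $(y_{k_n}^*)$ by a standard diagonal argument so that $|y_{k_n}^*(w_{k_m})|<\frac{1}{4N}$ for every $m\neq n$. Setting $x_n^* = y_{k_n}^*$, the lower bound $\|x_n^*\|\ge 1$ is automatic, and the witnesses $w_{k_n}$ enforce a quantitative "near-disjointness" on the dual action.

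To produce the bound $\|\sum_{n\in I}\e_n x_n^*\|<2$ for $|I|=N$, observe that this is equivalent to $\sup_{x\in B_{\eqs_D}}\sum_{n\in I}|x_n^*(x)|<2$. I would argue this by combining the near-disjointness of the $x_n^*$ with the Schauder projection structure: for $x\in B_{\eqs_D}$ with finite support, one can decompose $x=\sum_n P_{\ran x_n^*}\,x + (\text{remainder})$, where each block piece has norm at most $1$ by monotonicity. Using that $(y_n^*)$ is weakly null combined with a refinement of the selection (absorbing also a countable dense subset of $B_{\eqs_D}$ via the separability of $\eqs_D$), the diagonalization can be strengthened so that the collective contribution $\sum_{n\in I}|x_n^*(x)|$ is strictly below $2$ uniformly in $x$.

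The hard part will be the final uniformity step: weak nullity yields pointwise control but not a priori uniform control over all of $B_{\eqs_D}$. The expected resolution is to combine the diagonal selection against a weakly dense countable subset of $B_{\eqs_D}$ with the structural fact that every $x\in B_{\eqs_D}$ is, via the monotonicity of $(e_n)$, the norm limit of its finitely supported projections, together with the codimension-one description of the bidual given in Proposition \ref{P5} (which ensures that $B_{\eqs_D}$ behaves like a weakly compact set modulo the span of $\chi_\N$, on which the weakly null block sequence $(x_n^*)$ acts trivially). This passage from pointwise to uniform decay is the key technical point of the argument.
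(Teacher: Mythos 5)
Your proposal has a genuine gap at exactly the point you flag as ``the hard part,'' and that gap cannot be closed by the tools you invoke. Everything you use --- shrinkingness of $(e_n^*)$ in $(\eqs_D)_*$, weak nullity of bounded block sequences, near-biorthogonal witnesses $w_n$, monotone projections, and the quasireflexivity from Proposition \ref{P5} --- is soft, generic information, and the conclusion of Lemma \ref{L11} is simply false for spaces having all of these properties. For instance, take Tsirelson's space $T$ in the role of $(\eqs_D)_*$: it is reflexive (so every bounded block sequence is weakly null and the unit ball of the dual is even weakly compact), the basis is bimonotone, yet every normalized block sequence $(x_n^*)$ satisfies $\|\sum_{n\in I}\e_n x_n^*\|\ge \frac{1}{2}\#I$ for all admissible $I$, so no block sequence can have all $N$-term signed sums bounded by $2$ once $N\ge 5$. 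Consequently no argument that upgrades pointwise weak-null decay to a bound uniform over the whole dual ball can work without using the specific norm of $\eqs_D$; in particular the diagonalization against a countable dense subset of $B_{\eqs_D}$ does not give uniformity (for a fixed $N$-set $I$ you control $x_n^*(x'_j)$ only for the finitely many $j$ absorbed before $\min I$, and density in norm forces an $\e$-net at scale $\sim 1/N$, which is not countable-diagonalizable). A smaller point: your reduction of the sign condition to $\sup_{x\in B_{\eqs_D}}\sum_{n\in I}|x_n^*(x)|<2$ is fine (taking the maximum over signs), but it only restates the problem.

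The missing ingredient is the structural upper estimate built into the norming set $D$: by Remark \ref{R3}, the ball $B_{\eqs_D^*}$ is closed under the $(\mc{A}_{n_{2j}},\frac{1}{m_{2j}})$ operations, so any $\pm 1$ combination of at most $n_{2j}$ successive functionals of norm $\le 1$ has norm at most $m_{2j}$. The paper's proof is a proof by contradiction exploiting exactly this, via Ramsey's theorem: fix $s,j$ with $2^s>m_{2j}$ and $N^s\le n_{2j}$, start from a normalized block sequence in $Z$, and apply Ramsey's theorem at each of $s$ stages to the colouring ``some choice of signs on some $N$ consecutive members has norm $\ge 2^k$'' versus its negation; failure of the lemma forces the bad alternative each time (otherwise a rescaled subsequence would verify the lemma), producing after $s$ stages block functionals that are signed sums of $N^s\le n_{2j}$ of the original normalized functionals but have norm $\ge 2^s$, contradicting the bound $m_{2j}$ from Remark \ref{R3}. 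Your outline never touches the sets $K_n, M_n, D_n$ or the parameters $(m_j,n_j)$, so it cannot distinguish $(\eqs_D)_*$ from Tsirelson-like spaces where the statement fails; to repair it you would have to replace the ``pointwise to uniform'' step by an argument of this Ramsey-plus-upper-estimate type (or otherwise feed in the $(\mc{A}_{n_{2j}},\frac{1}{m_{2j}})$ structure of $D$).
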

 \begin{proof}[\bf Proof.]
 Assume that the lemma fails.  We select $s,j$ with $2^s>m_{2j}$
 and $N^s\le n_{2j}$.
   We shall denote by $\N_0$ the set
 $\{0,1,2,\ldots\}$.
 We choose an arbitrary normalized block sequence
 $(f^0_i)_{i\in\N_0}$ in the block subspace $Z$.

  We set
 \[ \mc{A}_1=\{L\in [\N_0],\; L=\{l_i:\; i\in \N_0\}:\;
 \forall(\e_i)_{i=0}^{N-1}\in\{-1,1\}^N, \;\;
 \|\sum\limits_{i=0}^{N-1}\e_if^{0}_{l_i}\|<2  \} \]
 \begin{eqnarray*} \mc{B}_1&=&[\N_0]\setminus \mc{A}_1\\
 &=&\{L\in [\N_0],\; L=\{l_i:\; i\in \N_0\}:\;
 \exists (\e_i)_{i=0}^{N-1}\in\{-1,1\}^N, \;\;
 \|\sum\limits_{i=0}^{N-1}\e_if^{0}_{l_i}\|\ge 2  \}.
 \end{eqnarray*}
 From Ramsey' s theorem,  there exists a homogenous set $L$ either
 in $\mc{A}_1$ or in $\mc{B}_1$. Our assumption on the failure of
 the lemma rejects the first alternative, hence the homogenous set
 is in $\mc{B}_1$. We may assume that $L=\N_0$.  In particular we
 get that there exist $\e^0_i\in\{-1,1\}$, $i\in \N_0$, such that
 setting $f^1_n=\sum\limits_{i=nN}^{(n+1)N-1}\e^0_if^0_i$, $n\in
 N_0$, we have that $\|f^1_n\|\ge 2$ for all $n$.

  We set
 \[ \mc{A}_2=\{L\in [\N_0],\; L=\{l_i:\; i\in \N_0\}:\;
 \forall(\e_i)_{i=0}^{N-1}\in\{-1,1\}^N, \;\;
 \|\sum\limits_{i=0}^{N-1}\e_if^{1}_{l_i}\|<2^2  \} \]
 \begin{eqnarray*}
  \mc{B}_2& = & [\N_0]\setminus \mc{A}_2\\
   & = &\{L\in [\N_0],\; L=\{l_i:\; i\in \N_0\}:\;
 \exists (\e_i)_{i=0}^{N-1}\in\{-1,1\}^N, \;\;
 \|\sum\limits_{i=0}^{N-1}\e_if^{1}_{l_i}\|\ge 2^2  \}.
 \end{eqnarray*}
 Again, the homogenous set $L$  resulting from Ramsey' s theorem
 can not be in $\mc{A}_2$, since then the sequence
 $(\frac{1}{2}f^1_n)_{n\in L}$ would satisfy the conclusion of the
 lemma and this contradicts to our assumption that the lemma fails.
 As before, we may assume that $L=\N_0$; we  choose
 $\e^1_i\in\{-1,1\}$, $i\in \N_0$, such that
 the functionals $f^2_n=\sum\limits_{i=nN}^{(n+1)N-1}\e^1_if^1_i$, $n\in
 N_0$ satisfy $\|f^2_n\|\ge 2^2$. Notice that
 $f^2_n=\sum\limits_{i=nN^2}^{(n+1)N^2-1}\e^0_i\e^1_{[\frac{i}{N}]}f^0_i$.

 After $s$ consecutive applications of the same argument we obtain
 a block sequence $(f^s_n)_{n\in N_0}$ with $\|f^s_n\|\ge 2^s$
 such that $f^s_n=\sum\limits_{i=nN^s}^{(n+1)N^s-1}\delta_if^0_i$
 for some sequence of signs $(\delta_i)_{i\in \N_0}$.
 Taking into account that $N^s\le n_{2j}$, Remark \ref{R3} implies
 that $\|\frac{1}{m_{2j}}\sum\limits_{i=nN^s}^{(n+1)N^s-1}\delta_i
 f^0_i\|\le 1$, i.e.   $\|f^s_n\|\le m_{2j}$. We thus get that
 $2^s\le\|f^s_n\|\le m_{2j}$ which contradicts to our choice of $s,j$. The proof
 of the lemma is complete.
 \end{proof}

 \begin{lemma}\label{L12} Let $Z$ be a block subspace of $(\eqs_D)_*$, $\e>0$ and
 $k\in \N$.  Then there exist $z^*$ a $2-c_0^k$ vector with $z^*\in Z$ and $z$ a
 $2-\ell_1^k$ average such that $\ran z^*=\ran z$, $z^*(z)>1$ and
 $\|z\|_G<\e$.
 \end{lemma}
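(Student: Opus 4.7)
The plan is to combine Lemma \ref{L11}, which supplies a ``$c_0$-style'' block sequence in $Z$, with a double averaging of biorthogonal-type vectors in $\eqs_D$ whose successive differences are nearly annihilated by $\chi_{\N}$. First I apply Lemma \ref{L11} to $Z$ with parameter $N:=2km$, where the integer $m$ will be chosen later so that $(\e/20+8)/(km)<\e$, to obtain a block sequence $(x_n^*)_{n\in\N}\subset Z$ with $\|x_n^*\|\ge 1$ and $\|\sum_{n\in I}\epsilon_n x_n^*\|<2$ for every $|I|\le N$ and every choice of signs. For each $n$, using that $\eqs_D$ is norming for its predual, pick $y_n\in\eqs_D$ with $\|y_n\|\le 1$, $x_n^*(y_n)>1-\delta_n$ for a sequence $\delta_n\downarrow 0$ to be fixed later, and $\ran y_n=\ran x_n^*$ (the endpoint equalities are arranged by a tiny correction).

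Since $(\chi_\N(y_n))$ is bounded in $\R$, a subsequence makes it convergent, so $\chi_\N(y_{2p-1}-y_{2p})\to 0$; a further thinning yields $\sum_p|\chi_\N(y_{2p-1}-y_{2p})|<\e/20$ over the pairs to be used. Select increasing indices $p_{j,l}$ ($1\le j\le k$, $1\le l\le m$) far enough in the tail that $\delta':=\max_{j,l}\delta_{p_{j,l}}$ satisfies $\delta'<1-M_F/2$, where $M_F:=\|\sum_{j,l}(x_{2p_{j,l}-1}^*-x_{2p_{j,l}}^*)\|<2$ by Lemma \ref{L11} (since $2km\le N$). Define
\begin{align*}
u_j&:=\frac{1}{m}\sum_{l=1}^m(y_{2p_{j,l}-1}-y_{2p_{j,l}}),\quad z:=\frac{1}{k}\sum_{j=1}^k u_j,\\
z^*&:=\frac{1}{M_F}\sum_{j,l}(x_{2p_{j,l}-1}^*-x_{2p_{j,l}}^*)\in Z.
\end{align*}
Because $x_n^*(y_{n'})=0$ for $n\neq n'$ (disjoint block supports), only diagonal terms survive in $z^*(z)$, giving $z^*(z)\ge 2(1-\delta')/M_F>1$; hence $\|z\|>1$, each $\|u_j\|\le 2$, and the $u_j$'s are successive, so $z$ is a $2$-$\ell_1^k$ average.

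Next, the decomposition $z^*=\sum_{j=1}^k z_j^*$ with $z_j^*:=(1/M_F)\sum_l(x_{2p_{j,l}-1}^*-x_{2p_{j,l}}^*)$ is successive; evaluating $z_j^*$ at $y_{2p_{j,1}-1}$ shows $\|z_j^*\|\ge(1-\delta')/M_F>1/2$, while $\|z^*\|=1$, so $z^*$ is a $2$-$c_0^k$ vector in $Z$. The range condition $\ran z^*=\ran z$ follows from $\ran y_n=\ran x_n^*$. Finally, for $\|z\|_G<\e$: the $km$ differences $x_{j,l}:=y_{2p_{j,l}-1}-y_{2p_{j,l}}$ are successive blocks of norm $\le 2$, so for any interval $I$ at most two of them overlap $I$ partially, giving
\[
\bigl|\chi_I\bigl(\sum_{j,l}x_{j,l}\bigr)\bigr|\le \sum_{j,l}|\chi_\N(x_{j,l})|+4\max_{j,l}\|x_{j,l}\|<\e/20+8,
\]
so $\|z\|_G=\|\sum_{j,l}x_{j,l}\|_G/(km)<\e$ by the choice of $m$. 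The main obstacle is keeping $\|z\|\ge 1$ and $\|u_j\|\le 2$ simultaneously: this hinges on the strict inequality $M_F<2$ provided by Lemma \ref{L11}, which in turn fixes how small $\delta'$---and hence how deep in the tail the $p_{j,l}$---must be chosen.
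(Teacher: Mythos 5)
Your construction is essentially the paper's: Lemma \ref{L11} applied with $N=2\cdot(\text{inner length})\cdot k$, near-norming vectors in $\eqs_D$ with matching ranges, differences of consecutive terms thinned so that $\chi_\N$ contributes less than a fixed small amount, an inner average of length $m$ (the paper's $d$) to make the $G$-norm small via the ``at most two partially covered blocks'' estimate, and an outer sum of $k$ such blocks paired with the corresponding sum of the $\pm x_n^*$. The one place where your write-up differs in substance is the normalization of $z^*$, and there your argument as written has a quantifier problem: you require the selected indices to be deep enough that $\delta'<1-M_F/2$, but $M_F$ is defined only after the selection is made, and Lemma \ref{L11} gives no uniform gap below $2$ --- as you push the selection deeper, $M_F$ could in principle creep up to $2$ faster than $\delta'$ decreases, so ``far enough in the tail'' is not a well-posed instruction. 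Both the estimate $z^*(z)\ge 2(1-\delta')/M_F>1$ and the lower bound $\|z_j^*\|\ge(1-\delta')/M_F>1/2$ hinge on this circular choice.

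The gap is easily closed, in either of two ways. Since each $x_n^*$ is finitely supported and the basis of $\eqs_D$ is bimonotone, the supremum defining $\|x_n^*\|$ over $B_{\eqs_D}$ is attained on the compact unit ball of the span of $\{e_i:\ i\in\ran x_n^*\}$, so you may take $\delta_n=0$ (i.e. $x_n^*(y_n)=\|x_n^*\|\ge 1$); then $z^*(z)\ge 2/M_F>1$ and $\|z_j^*\|\ge 1/M_F>1/2$ hold for any admissible selection, with no interplay between $\delta'$ and $M_F$. Alternatively, do what the paper does: replace the functionals by $\tfrac12 x_n^*$, so that every signed sum of $N$ of them has norm at most $1$ and each has norm at least $\tfrac12$; then $z^*$ needs no $M_F$-normalization, $\|z^*\|\le 1$ is automatic, each pair contributes more than $\tfrac12+\tfrac12=1$ to the diagonal action, and the lower bound $\|z_j^*\|\ge\tfrac12$ follows from bimonotonicity. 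With either repair your proof is correct and coincides with the paper's argument in all essential respects.
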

  \begin{proof}[\bf Proof.]
 We choose $d$ with $\frac{9}{2}\cdot\frac{1}{d}<\e$ and we set
 $N=k\cdot(2d)$. Applying Lemma \ref{L11} we select a block
 sequence $(x^*_n)_{n\in\N}$ in $Z$, with $\|x_n^*\|>\frac{1}{2}$,
 such that for every subset $I$ of $\N$ with $N$ elements and
 every choice of signs $(\e_n)_{n\in I}\in \{-1,1\}^I$  we have
 that $\|\sum\limits_{n\in I}\e_nx_n^*\|\le 1$.
 For each $n$, we select $x_n\in \eqs_D$ with $\ran x_n=\ran x_n^*$,
 $\|x_n\|\le 1$ and $x_n^*(x_n)>\frac{1}{2}$.
 We notice that for every subset $I$ of $\N$ with
 $N$ elements and every choice of scalars $(\lambda_n)_{n\in I}$
 we have that $\|\sum\limits_{n\in
 I}\lambda_nx_n\|\ge\frac{1}{2}\sum\limits_{n\in I}|\lambda_n|$,
 due to the action of the functional $\sum\limits_{n\in
 I}\e_nx_n^*$, where $\e_n=\sgn(\lambda_n)$.

 We may assume, passing to a subsequence, that the sequence
 \seq{x}{n} is weakly Cauchy in $Y_G$,  hence the sequence of its
 successive
 differences, i.e. the sequence \seq{y}{n} defined as
 $y_n=x_{2n-1}-x_{2n}$, is weakly null in $Y_G$.
  We notice that the extreme
 points of the unit ball of the dual space $\mbox{B}_{Y_G^*}$ are
 contained in the set $\overline{G}^p=\{\pm \chi_{E}:\; E \mbox{ is an
 interval of }\N\}$. Thus in order to check the behavior of a
 block sequence in the weak topology in $Y_G$, it is enough to
 check the action of $\pm\chi_{\N}$.
 Passing to a further subsequence we may assume that
 $\sum\limits_{n=1}^{\infty}|\chi_{\N}(y_n)|<\frac{1}{2}$.

 We claim that $\|\sum\limits_{i=1}^m\e_iy_{k_i}\|_G\le
 \frac{9}{2}$ for every $m\in\N$, $k_1<\cdots<k_m$ in $\N$ and
 every choice of signs $\e_1,\ldots,\e_m\in\{-1,1\}$. Indeed, let
 $E$ be any finite interval. We denote by $r$ (resp. $s$) the
 minimum (resp. maximum) integer $i$ such that $E\cap\ran
 y_{k_i}\neq\emptyset$. Then for $r<i<s$ we have that
 $\chi_E(y_{k_i})=\chi_{\N}(y_{k_i})$ hence
 \begin{eqnarray*}
 |\chi_E(\sum\limits_{i=1}^m\e_iy_{k_i})|
     & \le  & \|Ey_r\|_G+  | \chi_{\N}(\sum\limits_{i=r+1}^{s-1}
     \e_iy_{k_i})\| + \|Ey_s\|_G   \\
   & \le  & \|x_{2r-1}\|_G+\|x_{2r}\|_G+
     \sum\limits_{i=r+1}^{s-1}|\chi_{\N}(y_{k_i})|
    +   \|x_{2s-1}\|_G+\|x_{2s}\|_G   \\
    & \le  & \|x_{2r-1}\|+\|x_{2r}\|+
     \sum\limits_{n=1}^{\infty}|\chi_{\N}(y_{n})|
    +   \|x_{2s-1}\|+\|x_{2s}\|\\
  &     < &1+1+\frac{1}{2}+1+1=\frac{9}{2}
 \end{eqnarray*}
 (with the obvious adaptations in the previous proof if $r=s-1$ or
 $r=s$).

 For $i=1,\ldots, k$ we set
 \[  z_i^*=\sum\limits_{l=(i-1)d+1}^{id}(x^*_{2l-1}-x^*_{2l})
 \mbox{\quad and \quad}
 z_i=\frac{1}{d}\sum\limits_{l=(i-1)d+1}^{id}y_l
 =\frac{1}{d}\sum\limits_{l=(i-1)d+1}^{id}(x_{2l-1}-x_{2l}).\]

 For each $i$ we have that $\|z_i^*\|\ge \|-x^*_{2id}\|>
 \frac{1}{2}$ (due to the bimonotonicity of the norm), $\|z_i\|\le
 \frac{1}{d}\sum\limits_{l=(i-1)d+1}^{id}(\|x_{2l-1}\|+\|x_{2l}\|)\le
 2$, while \[\|z_i\|_G\le
 \frac{1}{d}\|\sum\limits_{l=(i-1)d+1}^{id}(x_{2l-1}-x_{2l})\|_G\le
 \frac{1}{d}\cdot \frac{9}{2}<\e.\] We also have that
 \[z_i^*(z_i)=\frac{1}{d}\sum\limits_{l=(i-1)d+1}^{id}\big(x^*_{2l-1}(x_{2l-1})+x^*_{2l}(x_{2l})\big)
  >\frac{1}{d}\sum\limits_{l=(i-1)d+1}^{id}(\frac{1}{2}+\frac{1}{2})=1\]
  and $\ran z_i^*=\ran z_i$.

  Finally, we set
  \[ z^*=\sum\limits_{i=1}^{k}z_i^*
  \mbox{\quad and \quad}
   z=\frac{1}{k}\sum\limits_{i=1}^kz_i.         \]
  The fact that the functional $z^*$ is the sum of $k\cdot (2d)=N$
  functionals $\pm x_n^*$ and our initial choice of the sequence
  sequence $(x_n^*)_{n\in\N}$, imply that $\|z^*\|\le 1$ while, since
   $\|z_i^*\| \ge \frac{1}{2}$ for each $i$, we get that $z^*$ is a
   $2-c_0^k$ vector belonging to the block subspace $Z$.
  We also have that $z^*(z)=\frac{1}{k}\sum\limits_{i=1}^kz_i^*(z_i)>1$
  and $\ran z^*=\ran z$. Since $\|z\|\ge z^*(z)>1$ and $\|z_i\|\le
  2$ for $i=1,\ldots ,k$ the vector $z$ is a $2-\ell_1^k$ average,
  with $\|z\|_G\le \frac{1}{k}\sum\limits_{i=1}^k\|z_i\|_G<\e$.
  \end{proof}

  \begin{corollary}\label{C3}
  Let $Z$ be a block subspace of $(\eqs_D)_*$, $k\in\N$ and $\e,\delta>0$.
  Then there exist  $z$ a $2-\ell_1^k$ average with $\|z\|_G<\e$
  and $f\in D$ with $\dist(f,Z)<\delta$, such that
  $\ran f=\ran z$ and  $f(z)>1$.
  \end{corollary}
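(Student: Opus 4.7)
The plan is to combine Lemma \ref{L12} with the pointwise density of $D$ in $B_{\eqs_D^*}$ recalled in Remark \ref{R3}. First apply Lemma \ref{L12} to the block subspace $Z$ with the given parameters $\e$ and $k$ to obtain a $2$-$c_0^k$ vector $z^* \in Z$ and a $2$-$\ell_1^k$ average $z$ satisfying $\ran z^* = \ran z$, $z^*(z) > 1$ and $\|z\|_G < \e$. Since $\|z\| \ge 1$ and $z^*(z) > 1$ strictly, there is genuine slack to perturb $z^*$ into an element $f$ of $D$ while preserving the inequality $f(z) > 1$; and since $z^* \in Z$, any $f$ close to $z^*$ in $\eqs_D^*$-norm will automatically satisfy $\dist(f, Z) < \delta$.

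The key step is therefore to approximate $z^*$ by an element of $D$ with the correct support. Set $E = \ran z$. Because $D$ is pointwise dense in $B_{\eqs_D^*}$, we can find $g \in D$ that agrees with $z^*$ on each basis vector $e_n$, $n \in E$, up to error as small as we wish. Replacing $g$ by its restriction $E g = \chi_E \cdot g$, which lies in $D$ by Remark \ref{R6}(iv), produces $f \in D$ with $\supp f \subset E$. Since $\supp z^* \subset E$ and the basis is bimonotone, the difference $f - z^*$ is supported in $E$, so $\|f - z^*\|_{\eqs_D^*}$ equals the norm of $f - z^*$ as a functional on the finite-dimensional space spanned by $\{e_n : n \in E\}$. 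On that finite-dimensional dual, pointwise closeness on the basis is equivalent to norm closeness, so the approximation can be made arbitrarily tight in $\eqs_D^*$-norm.

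Fixing $\eta > 0$ small enough that simultaneously $\eta < \delta$, $\eta \|z\| < z^*(z) - 1$, and $\eta$ is less than half the minimum of $|z^*(e_n)|$ for $n \in \{\min \ran z, \max \ran z\}$ (both nonzero since $\ran z^* = \ran z$), we select $f$ as above with $\|f - z^*\|_{\eqs_D^*} < \eta$. Then $\dist(f, Z) < \delta$, and $f(z) > z^*(z) - \eta\|z\| > 1$, and the endpoint coordinates of $f$ remain nonzero, yielding $\ran f = \ran z$. The only, rather mild, obstacle is precisely this last point: ensuring that the support endpoints are preserved under approximation. This is handled by exploiting that $z^*$, being a $2$-$c_0^k$ vector, has its extreme coordinates quantitatively bounded away from zero independently of the approximation error.
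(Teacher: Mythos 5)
Your proposal is correct and follows essentially the same route as the paper: invoke Lemma \ref{L12} to produce the pair $(z,z^*)$ with $z^*\in Z$, then use the pointwise density of $D$ in $B_{\eqs_D^*}$ to replace $z^*$ by a nearby $f\in D$ with $\ran f=\ran z$, the smallness of the perturbation giving $f(z)>1$ and $\dist(f,Z)<\delta$. Your extra care about the endpoint coordinates and the finite-dimensional passage from pointwise to norm approximation just makes explicit what the paper leaves as ``easy to check.''
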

  \begin{proof}[\bf Proof.]  Let $z$ and $z^*$ be the $2-\ell_1^k$
  average and the $2-c_0^k$ vector respectively resulting from
  Lemma \ref{L12}. Since the norming set $D$ is pointwise dense in
  the unit ball of the dual space, we may choose $f\in D$ with
  $\ran f=\ran z^*$ such that
  $\|f-z^*\|<\min\{\delta,\frac{z^*(z)-1}{2}\}$.
  It is easy to check that $z$ and $f$ satisfy the conclusion of
  the corollary.
  \end{proof}

  \begin{lemma}\label{L19}
  Let $Z$ be a block subspace of $(\eqs_D)_*$, $j\in N$ and $\delta>0$.
  Then there exists a $(4,2j)$ exact pair $(z,z^*)$,
  with $\dist(z^*,Z)<\delta$.
  \end{lemma}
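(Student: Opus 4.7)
The plan is to mimic the proof of Lemma \ref{L18}, replacing the use of Lemma \ref{L1} (which produces $\ell_1^k$ averages in a block subspace of $\eqs_D$) by iterated applications of Corollary \ref{C3} (which produces $\ell_1^k$ averages in $\eqs_D$ together with norming functionals that may be chosen arbitrarily close to a prescribed block subspace of $(\eqs_D)_*$). The output will be a vector $z \in \eqs_D$ of the same form as in Lemma \ref{L18}, paired with a functional $z^* \in K^{2j}\subset D$ that is an arithmetic mean of functionals each of which sits within a controlled distance of $Z$.

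More precisely, I will set $\e = \frac{1}{m_{2j}^3}$ and proceed by induction on $k = 1,\ldots, n_{2j}$: assuming a block sequence $z_1<\cdots< z_{k-1}$ in $\eqs_D$ and functionals $z_1^*<\cdots<z_{k-1}^*$ in $D$ have been produced with $\ran z_i^* = \ran z_i$, $z_i^*(z_i)>1$, $\|z_i\|_G<\e$, each $z_i$ a $2$-$\ell_1^{n_{j_i}}$ average for a strictly increasing sequence $(j_i)$, and with $\dist(z_i^*, Z) < \delta/n_{2j}$, I tail-truncate $Z$ to a block subspace $Z_k\subset Z$ whose elements are supported after $\max\supp z_{k-1}^*$, choose $j_k>j_{k-1}$ large, and apply Corollary \ref{C3} (with parameters $k := n_{j_k}$, $\e$, and approximation tolerance $\delta/n_{2j}$) to $Z_k$. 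This produces the required $z_k$ and $z_k^*$. By Remark \ref{R5}, we may pass to a subsequence of length $n_{2j}$ and assume the $(z_k)$ form a $(4,\e)$ RIS.

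Having $(z_k)_{k=1}^{n_{2j}}$ and $(z_k^*)_{k=1}^{n_{2j}}$ in hand, I set
\[ x = \frac{1}{n_{2j}}\sum_{k=1}^{n_{2j}} z_k, \qquad
   z^* = \frac{1}{m_{2j}}\sum_{k=1}^{n_{2j}} z_k^*. \]
Since the $z_k^*$ are successive members of $D$ and $n_{2j}$ in number, $z^* \in K^{2j}$, giving condition (i) of Definition \ref{D7}. Corollary \ref{C6} supplies $\frac{1}{m_{2j}} \le z^*(x) \le \|x\| \le \frac{8}{m_{2j}}$, so a scalar $\theta\in[\frac18,1]$ and the definition $z = \theta m_{2j} x$ yield $z^*(z)=1$, $\ran z^* = \ran z$, and the bound $\|z\|_G \le m_{2j}\cdot \e = \frac{1}{m_{2j}^2}$; the remaining estimates in (ii) follow from Corollary \ref{C6} exactly as in Lemma \ref{L18}. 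So $(z,z^*)$ is a $(4,2j)$ exact pair.

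The only genuinely new point (and what I expect to be the main technical obstacle) is the distance estimate $\dist(z^*,Z)<\delta$. Since $Z$ is a closed linear subspace of $(\eqs_D)_*$, for each $k$ I can pick $\tilde z_k^*\in Z$ with $\|z_k^* - \tilde z_k^*\| < \delta/n_{2j}$, and then $\tilde z^* := \frac{1}{m_{2j}}\sum_{k=1}^{n_{2j}} \tilde z_k^* \in Z$ satisfies
\[ \|z^* - \tilde z^*\| \le \frac{1}{m_{2j}}\sum_{k=1}^{n_{2j}} \|z_k^* - \tilde z_k^*\| < \frac{n_{2j}}{m_{2j}} \cdot \frac{\delta}{n_{2j}} = \frac{\delta}{m_{2j}} < \delta, \]
so $\dist(z^*,Z)<\delta$. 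The care required is only in the inductive step: one must make sure that the block subspace $Z_k\subset Z$ passed to Corollary \ref{C3} still has the structure of a block subspace of $(\eqs_D)_*$ (which is clear by tail-truncating the block basis of $Z$) so that the $z_k^*\in D$ produced has support after that of $z_{k-1}^*$ — this follows because $\ran z_k^* = \ran z_k^{(0)}$ where $z_k^{(0)} \in Z_k$ is the $c_0$-vector produced in the proof of Lemma \ref{L12}.
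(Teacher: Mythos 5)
Your proposal is correct and follows essentially the same route as the paper's proof: iterate Corollary \ref{C3} (with tolerance $\delta/n_{2j}$ and tail-truncated copies of $Z$ to keep the functionals successive), pass to a $(4,\frac{1}{m_{2j}^3})$ RIS via Remark \ref{R5}, form $z^*=\frac{1}{m_{2j}}\sum_{k}z_k^*\in K^{2j}$ and normalize $z=\theta m_{2j}x$ using the estimates of Corollary \ref{C6}, with the distance bound $\dist(z^*,Z)\le\frac{1}{m_{2j}}\sum_k\dist(z_k^*,Z)<\delta$ exactly as in the paper. The only difference is that you spell out the inductive bookkeeping and the approximants $\tilde z_k^*\in Z$, which the paper leaves implicit.
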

  \begin{proof}[\bf Proof.]
  Using Corollary \ref{C3}, we may choose a sequence $(z_k,z_k^*)_{k\in\N}$
  such that:
  \begin{enumerate}
  \item[(i)] The sequence \seq{z}{k} is a block sequence in $\eqs_D$
   with $\|z_k\|_G<\frac{1}{m_{2j}^3}$ and each
  $z_k$ is a $2-\ell_1^{n_{j_k}}$  for an increasing sequence
  $(j_k)_k$.
  \item[(ii)] $z_k^*\in D$ with $\dist(z_k^*,Z)<\frac{\delta}{n_{2j}}$.
  \item[(iii)] $\ran z_k^*=\ran z_k$ and $z_k^*(z_k)>1$.
  \end{enumerate}

  From Remark \ref{R5} we may assume, passing to a subsequence,
  that $(z_k)_{k=1}^{n_{2j}}$ is a $(4,\frac{1}{m_{2j}^3})$ RIS.
  We set
  \[ z^*=\frac{1}{m_{2j}}(z_1^*+z_2^*+\cdots+z_{n_{2j}}^*).  \]
  Then the functional $z^*\in D$ is of type I, with $w(z^*)=m_{2j}$ and
  $\dist(z^*,Z)\le
  \frac{1}{m_{2j}}\sum\limits_{k=1}^{n_{2j}}\dist(z_k^*,Z)<\delta$.

  From Corollary \ref{C6}, for $f\in D$ of
  type I,  we have that
 \[ |f\big(\frac{1}{n_{2j}}\sum\limits_{k=1}^{n_{2j}}z_k\big)|\le
 \left\{ \begin{array} {l@{\quad} l}
           \frac{3\cdot 4}{w(f)m_{2j}} & \mbox{ if }w(f)<m_{2j} \\[4mm]
   4(\frac{1}{w(f)}+\frac{1}{m_{2j}^2}) & \mbox{ if }w(f)\ge
   m_{2j}.
   \end{array}\right.     \]
 In particular
 $\|\frac{1}{n_{2j}}\sum\limits_{k=1}^{n_{2j}}z_k\|\le \frac{8}{m_{2j}}$.
 On the other hand
 \[z^*\big(\frac{1}{n_{2j}}\sum\limits_{k=1}^{n_{2j}}z_k\big)=\frac{1}{m_{2j}}\cdot
 \frac{1}{ n_{2j}}\cdot\sum\limits_{k=1}^{n_{2j}}z_k^*(z_k)>\frac{1}{m_{2j}}.\]  Thus there
  exists $\theta$, with $\frac{1}{8}\le \theta <1$, such that
  $z^*\big(\theta\frac{m_{2j}}{n_{2j}}\sum\limits_{k=1}^{n_{2j}}z_k\big)=1$.
  We set
  \[  z=\theta\frac{m_{2j}}{n_{2j}}(z_1+z_2+\cdots+z_{n_{2j}}) .\]

 Then $z^*(z)=1$,
 $\|z\|_G\le\frac{m_{2j}}{n_{2j}}\sum\limits_{k=1}^{n_{2j}}\|z_k\|_G<\frac{1}{m_{2j}^2}$,
 while for $f\in D$ of type I  we have that
 \[ |f(z)|\le
 \left\{ \begin{array} {l@{\quad} l}
           \frac{3\cdot 4}{w(f)} & \mbox{ if }w(f)<m_{2j} \\[4mm]
   4(\frac{m_{2j}}{w(f)}+\frac{1}{m_{2j}}) & \mbox{ if }w(f)\ge
   m_{2j}.
   \end{array}\right.     \]

  Therefore    $(z,z^*)$ is  a $(4,2j)$ exact pair
  (Definition \ref{D7})
  with $\dist(z^*,Z)<\delta$.
  \end{proof}

 \begin{theorem}\label{th5}
 The Banach space $(\eqs_D)_*$ is Hereditarily Indecomposable.
 \end{theorem}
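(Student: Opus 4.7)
The plan is to adapt the argument of Theorem \ref{th2} to the predual, using Lemma \ref{L19} in place of Lemma \ref{L18}. Given block subspaces $Y, Z$ of $(\eqs_D)_*$ and $\delta > 0$, I pick $j \in \N$ with $m_{2j-1}$ sufficiently large (e.g., $m_{2j-1} > 200/\delta$) and a small auxiliary $\eta > 0$. The construction of the HI witnesses proceeds by building, via iterated applications of Lemma \ref{L19}, a $(4, 2j-1)$ dependent sequence $(x_k, x_k^*)_{k=1}^{n_{2j-1}}$ respecting the special-sequence constraints imposed by $\sigma$, with the functional components $x_k^*$ arranged to lie within $\eta/n_{2j-1}$ of $Y$ or of $Z$ according to a prescribed pattern (alternating in $k$, or in coupled parallel sequences). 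At each step the weight of $x_{k+1}^*$ is dictated by $\sigma(x_1^*,\ldots,x_k^*)$, and Lemma \ref{L19} is exactly what allows an exact pair of this prescribed weight to be placed close to the chosen block subspace.

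From the dependent sequence data, one extracts candidates $y^* \in Y$ and $z^* \in Z$ (after small perturbations absorbing the Lemma \ref{L19} approximation errors). The lower bound on $\|y^*+z^*\|$ follows the same pattern as in Theorem \ref{th2}: the sum is essentially $\phi_\chi \in D$ with $\|\phi_\chi\| \le 1$, and testing it on the RIS-average $v = \frac{1}{n_{2j-1}}\sum x_k$ gives $\phi_\chi(v) = 1/m_{2j-1}$ while Corollary \ref{C6} yields $\|v\| \le 8/m_{2j-1}$; hence $\|\phi_\chi\| \ge 1/8$, and $\|y^*+z^*\|$ is bounded below by a uniform positive constant.

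The main part of the argument is the upper bound on $\|y^*-z^*\|$: this is the \emph{predual counterpart of Proposition \ref{P3}}. Where Proposition \ref{P3} controlled the $\eqs_D$-norm of an alternating vector sum of a dependent sequence by bounding the action of arbitrary $f \in D$ (using the basic inequality and tree decomposition), here I must control the dual norm of a corresponding functional built from the dependent sequence, i.e.\ bound its action on arbitrary $v \in B_{\eqs_D}$. The argument adapts the tree-level analysis of Lemma \ref{L15} in dual form: for any $v\in B_{\eqs_D}$, the $\sigma$-injectivity of the special-sequence coding guarantees that the weights appearing in the dependent sequences (beyond any shared initial segment) are pairwise distinct, which forces any tree-analysis of a witnessing $f\in D$ for $\|v\|$ to resonate with at most one side of the would-be cancellation at each level. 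Summing the resulting contributions through a decomposition analogous to that in the proof of Lemma \ref{L15} yields a bound of order $C/m_{2j-1}$.

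The main obstacle is precisely this predual analogue of Proposition \ref{P3}. The difficulty is twofold: first, we must invert the duality roles (testing all vectors against a fixed functional, rather than all functionals against a fixed vector); second, as the authors note in the introduction, the closure of $D$ under pointwise products enlarges the unconditional frame, so the test functionals available for estimating $\|v\|$ include $K^{2j-1}\cdot D$ type elements, and the whole tree analysis (including the $m_{j_0}^2 Q_{j_0}$ branching bound used in the additional assumption of Proposition \ref{P1}) must be carried out uniformly in these larger classes. Once this estimate is established, combining it with the lower bound on $\|y^*+z^*\|$ gives $\|y^*-z^*\|/\|y^*+z^*\| < \delta$ for $j$ large; a final perturbation of size $\eta$ moves $y^*, z^*$ into $Y, Z$ exactly, completing the proof.
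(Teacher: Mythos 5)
Your setup (building a $(4,2j-1)$ dependent sequence via Lemma \ref{L19} with the functionals $x_k^*$ chosen close to $Y$ and $Z$ alternately, and perturbing at the end) is the same as the paper's, but the core estimate you propose is false, and this is where the proof breaks. With $y^*=\frac{1}{m_{2j-1}}\sum_k x_{2k-1}^*$ and $z^*=\frac{1}{m_{2j-1}}\sum_k x_{2k}^*$ (so that $y^*+z^*=\phi_\chi$), there is no ``predual analogue of Proposition \ref{P3}'' giving $\|y^*-z^*\|\le C/m_{2j-1}$: testing $y^*-z^*=\frac{1}{m_{2j-1}}\sum_k(-1)^{k+1}x_k^*$ against the alternating average $w=\frac{1}{n_{2j-1}}\sum_k(-1)^{k+1}x_k$ gives $(y^*-z^*)(w)=\frac{1}{m_{2j-1}}$, while Proposition \ref{P3} yields $\|w\|\le \frac{96}{m_{2j-1}^2}$, so in fact $\|y^*-z^*\|\ge \frac{m_{2j-1}}{96}$, which grows with $j$. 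So the quantity you plan to bound above by $C/m_{2j-1}$ is provably large; the whole tree-analysis programme you sketch (the ``dual form of Lemma \ref{L15}'') is aimed at a statement that cannot hold. Your lower bound $\|y^*+z^*\|\ge 1/8$ is true but is not the estimate the argument needs.

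The point you are missing is that in the predual the roles of sum and difference are reversed: the \emph{sum} is the small quantity, trivially, because $y^*+z^*=\phi_\chi\in D$ gives $\|y^*+z^*\|\le 1$, while the \emph{difference} is large by the computation above, which uses Proposition \ref{P3} exactly as stated (as an estimate in $\eqs_D$, not in its dual) as a lower-bound tool for the dual norm. No new ``inverted duality'' estimate is required. One then picks $f_Y\in Y$, $f_Z\in Z$ with $\|f_Y-y^*\|<1$, $\|f_Z-z^*\|<1$, so $\|f_Y+f_Z\|<3$ and $\|f_Y-f_Z\|>\frac{m_{2j-1}}{96}-2$; since $Z$ is a subspace you may replace $f_Z$ by $-f_Z$, and this yields the HI inequality in the orientation you wanted ($\|$difference$\|\le\delta\|$sum$\|$) for $j$ large. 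As written, your proposal identifies the correct witnesses but then tries to prove the wrong inequality about them, so the main step of the proof is absent.
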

 \begin{proof}[\bf Proof.]
 Let $Y,Z$ be a pair of block subspaces of $(\eqs_D)_*$ an let $j\in\N$.
 Using Lemma \ref{L19} we may find a
   $(4,2j-1)$ dependent sequence
  $(x_k,x_k^*)_{k=1}^{n_{2j-1}}$ (see Definition \ref{D12}) which satisfies
  $\sum\limits_k\dist(x^*_{2k-1},Y)<1$ and
  $\sum\limits_k\dist(x^*_{2k},Z)<1$.

   We set
   \[
   y^*=\frac{1}{m_{2j-1}}\sum\limits_{k=1}^{n_{2j-1}/2}x^*_{2k-1}
  \quad\mbox{ and }\quad
   z^*=\frac{1}{m_{2j-1}}\sum\limits_{k=1}^{n_{2j-1}/2}x^*_{2k}.\]

  The  functional
 $\phi_\chi=\frac{1}{m_{2j-1}}\sum\limits_{k=1}^{n_{2j-1}}x_k^*$
 satisfies $\|\phi_\chi\|\le 1$ i.e.
  $\|y^*+z^*\|\le 1$.
  Proposition \ref{P3} entails that
  $\|\frac{1}{n_{2j-1}}\sum\limits_{k=1}^{n_{2j-1}}(-1)^{k+1}x_k\|\le\frac{96}{m_{2j-1}^2}$
  while\\
  $(y^*-z^*)(\frac{1}{n_{2j-1}}\sum\limits_{k=1}^{n_{2j-1}}(-1)^{k+1}x_k)=\frac{1}{m_{2j-1}}$,
  therefore $\|y^*-z^*\|\ge \frac{m_{2j-1}}{96}$.

  Selecting $f_Y\in Y$ with $\|f_Y-y^*\|<1$ and
  $f_Z\in Z$ with $\|f_Z-z^*\|<1$, we get that $\|f_Y+f_Z\|<3$ and
  $\|f_Y-f_Z\|>\frac{m_{2j-1}}{96}-2$.
  Since this procedure may be done for arbitrary large $j$, we conclude
  that the space $(\eqs_D)_*$ is Hereditarily Indecomposable.
    \end{proof}

   \begin{theorem}\label{th3}
  The Banach algebras $\eqs_D^*$ and $\mc{L}_{\diag}(\eqs_D)$ are Hereditarily Indecomposable.
   \end{theorem}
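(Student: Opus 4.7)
The plan is to deduce the HI property of $\eqs_D^*$ from the already-established HI property of its hyperplane $(\eqs_D)_*$ (Theorem \ref{th5}), and then to transfer the property to $\mc{L}_{\diag}(\eqs_D)$ via the isometric identification provided by Proposition \ref{P6}.

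First, I recall from Proposition \ref{P5} that $\eqs_D^* = \overline{\spann}\bigl(\{e_n^*:n\in\N\}\cup\{\chi_\N\}\bigr)$, while $(\eqs_D)_* = \overline{\spann}\{e_n^*:n\in\N\}$. Consequently $(\eqs_D)_*$ has codimension one in $\eqs_D^*$. The key general fact I would invoke is the following standard observation: if $Y$ is a closed finite-codimensional subspace of a Banach space $X$ and $Y$ is HI, then $X$ is HI. The proof is short: given infinite-dimensional closed subspaces $Y_1,Y_2$ of $X$, the intersections $Y_1\cap Y$ and $Y_2\cap Y$ are finite-codimensional in $Y_1,Y_2$ respectively, hence themselves infinite-dimensional. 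For any $\delta>0$, applying the HI property of $Y$ to these intersections produces vectors $y_1\in Y_1\cap Y$ and $y_2\in Y_2\cap Y$ with $\|y_1-y_2\|<\delta\|y_1+y_2\|$, which are automatically elements of $Y_1$ and $Y_2$.

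Combining Theorem \ref{th5} with this codimension-one extension principle immediately yields that $\eqs_D^*$ is Hereditarily Indecomposable. For the algebra $\mc{L}_{\diag}(\eqs_D)$, Proposition \ref{P6} provides an isometric isomorphism between $\eqs_D^*$ and $\mc{L}_{\diag}(\eqs_D)$, so the HI property is transported: a linear isometric isomorphism preserves decomposability of subspaces, hence $\mc{L}_{\diag}(\eqs_D)$ is also HI.

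There is essentially no obstacle in this argument beyond the two results (Theorem \ref{th5} and Proposition \ref{P6}) already proved; the codimension-one extension lemma is elementary. The real work was done earlier, in the proofs of the HI property of $(\eqs_D)_*$ via dependent sequences and of the identification of $\mc{L}_{\diag}(\eqs_D)$ with $\eqs_D^*$ via Theorem \ref{th1}.
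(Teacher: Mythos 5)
Your proposal is correct and follows essentially the same route as the paper: the paper also deduces the HI property of $\eqs_D^*$ from Theorem \ref{th5} together with the fact that $(\eqs_D)_*$ has codimension one in $\eqs_D^*$ (citing Theorem 1.4 of \cite{AT1} for the finite-codimension extension principle you prove directly), and then transfers it to $\mc{L}_{\diag}(\eqs_D)$ via the isometry of Proposition \ref{P6}. Your inclusion of the short argument for the extension principle is a harmless elaboration of the same proof.
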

  \begin{proof}[\bf Proof.]
 From  Proposition \ref{P5}
 the quotient space $\eqs_D^*/(\eqs_D)_*$ has dimension equal to
 one. Thus the fact that $(\eqs_D)_*$ is Hereditarily
 Indecomposable (Theorem \ref{th5}) implies that $(\eqs_D)^*$ is also Hereditarily
 Indecomposable (see also Theorem 1.4 of \cite{AT1}). As $\mc{L}_{\diag}(\eqs_D)$
 is isometric to $\eqs_D^*$ we conclude that the Banach algebra $\mc{L}_{\diag}(\eqs_D)$
 is also Hereditarily Indecomposable.
 \end{proof}

  \begin{remark}\label{R13}
  Let  $\eqs_{D,r}$ be the Banach space defined similarly to the space $\eqs_D$, with the only difference
  concerning the first inductive step of the definition of its norming set, replacing the set
  $G=\{\pm \chi_I:\; I\mbox{ is a finite interval of }\N\}$
  with the set $G_0=\{\pm e_k^*:\; k\in\N\}$. Then the space $\eqs_{D,r}$ is reflexive and HI
  while $\eqs_{D,r}^*$ is an example of a reflexive HI Banach algebra.
  The reason we  have included the set $G$ in the norming set $D$
  of the space $\eqs_D$, is  in order  to apply Theorem \ref{th1} and
  to obtain a HI Banach algebra of diagonal operators.
  \end{remark}

  \begin{theorem}\label{th6}
  Every diagonal operator $T:\eqs_D\to \eqs_D$
   is of the form $T=\lambda I+K$ with the operator
   $K$ being compact.
   \end{theorem}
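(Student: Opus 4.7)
The plan is to derive Theorem \ref{th6} directly from the structural results already assembled, with no additional constructions needed. By Proposition \ref{P6}, the map $\Phi:\eqs_D^*\to \mc{L}_{\diag}(\eqs_D)$ defined by
\[
w^*\text{-}\sum_{n=1}^\infty\lambda_n e_n^*\;\longmapsto\;SOT\text{-}\sum_{n=1}^\infty\lambda_n\mathbf{\overline{e}}_n
\]
is an isometric isomorphism of Banach algebras. In particular, the distinguished element $\chi_\N=w^*\text{-}\sum_{n=1}^\infty e_n^*$ of $\eqs_D^*$ is sent to $SOT\text{-}\sum_{n=1}^\infty\mathbf{\overline{e}}_n=I$, the identity operator on $\eqs_D$. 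Moreover, by Remark \ref{Nrem0007}, the closed subspace $(\eqs_D)_*=\overline{\spann}\{e_n^*:n\in\N\}$ of $\eqs_D^*$ is mapped by $\Phi$ precisely onto the closed subspace $\mc{K}_{\diag}(\eqs_D)$ of compact diagonal operators.

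Proposition \ref{P5} asserts that $\eqs_D$ is quasireflexive of order one; at the level of the dual this says
\[
\eqs_D^*=\overline{\spann}\bigl(\{e_n^*:n\in\N\}\cup\{\chi_\N\}\bigr)=(\eqs_D)_*+\R\cdot\chi_\N,
\]
and the sum is direct since $\chi_\N\notin(\eqs_D)_*$ (the codimension is exactly one). Applying the isometric algebra isomorphism $\Phi$ to both sides of this decomposition we obtain
\[
\mc{L}_{\diag}(\eqs_D)=\mc{K}_{\diag}(\eqs_D)+\R\cdot I,
\]
again a direct sum. Hence for each $T\in\mc{L}_{\diag}(\eqs_D)$ there exist a unique scalar $\lambda$ and a unique compact operator $K\in\mc{K}_{\diag}(\eqs_D)$ with $T=\lambda I+K$, which is the required conclusion. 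There is no real obstacle at this stage: all the work has been concentrated in setting up the isometry of Proposition \ref{P6}, in identifying $\mc{K}_{\diag}(\eqs_D)$ with $(\eqs_D)_*$ through Remark \ref{Nrem0007}, and in the quasireflexivity statement of Proposition \ref{P5}; the present theorem is merely the translation of these facts through $\Phi$.
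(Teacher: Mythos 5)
Your proof is correct and follows essentially the same route as the paper: apply the isometry $\Phi$ of Proposition \ref{P6}, note $\Phi(\chi_\N)=I$, use Remark \ref{Nrem0007} to identify $(\eqs_D)_*$ with $\mc{K}_{\diag}(\eqs_D)$, and transport the codimension-one decomposition $\eqs_D^*=(\eqs_D)_*\oplus\spann\{\chi_\N\}$ from Proposition \ref{P5} to get $\mc{L}_{\diag}(\eqs_D)=\mc{K}_{\diag}(\eqs_D)\oplus\spann\{I\}$. This matches the paper's argument step for step.
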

   \begin{proof}[\bf Proof.]
    From Remark \ref{Nrem0007}, the isometry $\Phi:\eqs_D^*\to\mc{L}_{\diag}(\eqs_D)$
    of Theorem \ref{th1},
    carries the predual space $(\eqs_D)_*$ onto the space $\mc{K}_{\diag}(\eqs_D)$
    of compact diagonal operators of the space $\eqs_D$.
    But since $(\eqs_D)^*=(\eqs_D)_*\oplus \spann\{\chi_\N\}$ and $\Phi(\chi_\N)=I$ we
    get that
     $\mc{L}_{\diag}(\eqs_D)=\mc{K}_{\diag}(\eqs_D)\oplus\spann\{I\}$
    hence every diagonal operator $T:\eqs_D\to \eqs_D$ takes the form
    $T=\lambda I+K$ with $K$ being a compact operator.
   \end{proof}

  \begin{remark}
  Since $\eqs_D$ is the dual of the space
  $(\eqs_D)_*=\overline{\spann}\{ e_n^*:\; n\in\N\}$, observing
  that every $T\in\mc{L}_{\diag}(\eqs_D,\seq{e}{n})$, being
  $w^*-w^*$ continuous, is a dual operator, we get the following.
  The correspondence
\[
  \mc{L}_{\diag}((\eqs_D)_*) \;\ni\; R \quad \longrightarrow  \quad R^* \;\in\; \mc{L}_{\diag}(\eqs_D)
  \]
  is an onto isometry, while, restricting this correspondence to
  the subalgebras of compact diagonal operators, we obtain that
   $\mc{K}_{\diag}((\eqs_D)_*)$ is isometric to
   $\mc{K}_{\diag}(\eqs_D)$, which in turn is isometric
   to $(\eqs_D)_*$ (Remark \ref{Nrem0007}). Thus we have
   established the existence of a Banach space $Y$ with a Schauder
   basis  (namely  $Y=(\eqs_D)_*$ with the basis
   $(e_n^*)_{n\in\N}$) which is naturally isometric to the space
  $\mc{K}_{\diag}(Y)$ of its compact diagonal operators, with the
  last being of codimension 1 in $\mc{L}_{\diag}(Y)$. As we have
  noticed in the introduction,
   since the basis of $Y$ is
  shrinking, the space
  $\mc{L}_{\diag}(Y)$ is naturally identified with the second dual of $\mc{K}_{\diag}(Y)$ (\cite{Se}).
  \end{remark}


\begin{thebibliography}{150}


  \bibitem{AnSc}
 {\rm G. Androulakis, Th. Schlumprecht},
 {\sl Strictly singular non-compact operators
 exist on the space of Gowers and Maurey}, J. London Math. Soc. (2) {\bf 64}, (2001), no. 3, 655--674.



  \bibitem{AAT} {\rm S.A. Argyros, A. Arvanitakis, A. Tolias},
 {\sl Saturated extensions, the attractors method and Hereditarily
 James tree spaces},  Methods in Banach space Theory (edited
 by J.M.F Castillo and W.B. Johnson), 1--90, London Math. Soc. Lecture Note Ser. {\bf 337},
 (2006).



 \bibitem{ADT} {\rm S.A. Argyros, I. Deliyanni, A. Tolias}, {\sl Strictly singular
 non-compact diagonal operators on HI  spaces}, (preprint, arXiv:0807.2388).



 \bibitem{AH} {\rm S.A. Argyros, R. G. Haydon}, {\sl
  A Hereditarily Indecomposable $\mc{L}^{\infty}$  space that solves the Scalar-plus-Compact problem}
   (preprint).




 \bibitem{ALT2}
 {\rm S.A. Argyros, J. Lopez-Abad, S. Todorcevic},
 {\sl A class of Banach spaces with few non-strictly singular
  operators}, J. Funct. Anal. {\bf 222}, (2005), no. 2, 306--384.

 \bibitem{AM} {\rm S.A. Argyros, A. Manoussakis}, {\sl
 An Indecomposable and Unconditionally Saturated  Banach space},
  Studia Math., {\bf 159}, (2003), no. 1, 1--32.


 \bibitem{ArTo} {\rm S.A. Argyros, S. Todorcevic},
 {\sl Ramsey Methods in Analysis}, {\em Advance Cources in
 Mathematics CRM Barcelona}, Birkhauser,  (2004).


 \bibitem{AT1}{\rm S.A. Argyros, A. Tolias},
 {\sl Methods in the Theory of Hereditarily Indecomposable Banach
 Spaces},  Memoirs of the  AMS, {\bf 170}, (2004), no. 806, vi+114pp.


 \bibitem{AT2}{\rm S.A. Argyros, A. Tolias},
 {\sl  Indecomposability and Unconditionality in duality},
  {\em Geom. and Funct. Anal.}, {\bf 14}, (2004), no. 2, 247-282.

\bibitem{Be} {\rm K. Beanland},
 {\sl Operators on asymptotic $\ell_p$ spaces which are not
 compact perturbations of a multiple of the identity},
 Illinois J. Math., (to appear)

\bibitem{BHO}{\rm S. Bellenot, R. Haydon, E. Odell},
{\sl Quasi-reflexive and tree spaces constructed in the spirit of
R. C. James}, Banach space theory, {\em Contemp. Math. AMS}, {\bf
85},   (1989), 19-43.














 \bibitem{Ga}{\rm I. Gasparis},
 {\sl  Strictly singular non-compact operators on Hereditarily Indecomposable Banach spaces}
   Proc. of the A.M.S., {\bf 131}, (2003), no. 4,  1181-1189.



 \bibitem{G1}{\rm  W.T. Gowers},
 {\sl A remark about the scalar-plus-compact problem,}
 {\em Proc. of the conference on Convex Geometric Analysis}, (Berkerey, CA, 1996), 111--115.






 \bibitem{GM1}{\rm W.T. Gowers, B. Maurey},
 {\sl The Unconditional basic Sequence Problem}, {\em Journal of
 A.M.S.}, {\bf 6}, (1993), no. 4, 851--874.





  \bibitem{J}{\rm  R.C. James},
 {\sl A separable somewhat reflexive space with nonseparable dual},
 {\em Bull. Amer. Math. Soc.},
 {\bf 80}, (1974), 738--743.


\bibitem{LS}{\rm J. Lindenstrauss, C. Stegall},
{\sl Examples of separable spaces which do not contain $\ell_1$
and whose duals are non-separable}, {\em  Studia Math.}, {\bf 54},
(1975), no. 1, 81--105.





  \bibitem{OR}{\rm  E. Odell, H. P. Rosenthal},
 {\sl A double-dual characterization of separable Banach spaces containing $\ell_1$},
 {\em Israel J. Math.},
 {\bf 20}, (1975), no. 3-4, 375--384.




 \bibitem{Se} {\rm A. Sersouri}, {\sl
 Op\'erateurs diagonaux dans les espaces \'a bases}, {\em Math. Z.}
 {\bf 199},   (1988), no. 4, 491--499.


\end{thebibliography}
 \end{document}